\documentclass[11pt,onecolumn]{article}
\usepackage{amsmath,amsthm,amssymb,amsfonts,epsfig,graphicx}
\usepackage{subcaption}
\usepackage{color}  
\usepackage{url}
\usepackage{array,multirow}
\usepackage{bm}
\usepackage{xcolor}
\usepackage{pifont}
\usepackage{pdflscape}
\usepackage{rotating}
\usepackage{comment}
\usepackage{hyperref}

\usepackage{stmaryrd}
\graphicspath{{./}, {figures/}}

\usepackage{enumitem}
\usepackage[vlined,linesnumbered,ruled,resetcount]{algorithm2e}

\usepackage[left=1.0in,top=0.5in,right=1.05in,bottom=1.0in,nohead,textheight=10in,footskip=0.3in]{geometry}

\usepackage[style=alphabetic, backend=bibtex]{biblatex}
\newcommand{\ALcode}{\texttt{AL}}
\newcommand{\CLASSICcode}{\texttt{CLASSIC}}

\newcommand{\LALQcode}{\texttt{LALQ}}
\newcommand{\LALCcode}{\texttt{LALC}}
\newcommand{\cRATIOcode}{\texttt{cRATIO}}
\newcommand{\cLOGcode}{\texttt{cLOG}}
\newcommand{\cSORTcode}{\texttt{cSORT}}
\newcommand{\cRATIOPLUScode}{\texttt{cRATIO+}}

\usepackage{glossaries}
\newglossaryentry{AL}{name=AL, description={ArcLength}}
\newglossaryentry{LAL}{name=LAL, description={Lifted ArcLength}}
\newglossaryentry{SVD}{name=SVD, description={Singular Value Decomposition}}
\newglossaryentry{NAG}{name=NAG, description={Numerical Algebraic Geometry}}

\newtheorem{theorem}{Theorem}
\newtheorem{corollary}[theorem]{Corollary}

\newtheorem{assumption}{Assumption}
\newtheorem{lemma}[theorem]{Lemma}
\newtheorem{definition}[theorem]{Definition}
\newtheorem{proposition}[theorem]{Proposition}
\newtheorem{remark}{Remark}
\newtheorem{example}{Example}

\DeclareMathOperator{\inter}{int}

\SetKwInput{kwMyRepeat}{Repeat}

\begin{document}

\def\myparagraph#1{\vspace{2pt}\noindent{\bf #1~~}}



\def\DeltaCeil{{\lceil\Delta\rceil}}
\def\TwoDeltaCeil{{\lceil 2\Delta\rceil}}
\def\OnePointFiveDeltaCeil{{\lceil 3\Delta/2\rceil}}

\long\def\ignore#1{}
\def\myps[#1]#2{\includegraphics[#1]{#2}}
\def\etal{{\em et al.}}
\def\Bar#1{{\bar #1}}
\def\br(#1,#2){{\langle #1,#2 \rangle}}
\def\setZ[#1,#2]{{[ #1 .. #2 ]}}
\def\Pr#1{\mbox{\tt Pr}\left[{#1}\right]}
\def\REACHED{\mbox{\tt REACHED}}
\def\AdjustFlow{\mbox{\tt AdjustFlow}}
\def\GetNeighbors{\mbox{\tt GetNeighbors}}
\def\true{\mbox{\tt true}}
\def\false{\mbox{\tt false}}
\def\Process{\mbox{\tt Process}}
\def\ProcessLeft{\mbox{\tt ProcessLeft}}
\def\ProcessRight{\mbox{\tt ProcessRight}}
\def\Add{\mbox{\tt Add}}

\newcommand{\eqdef}{{\stackrel{\mbox{\tiny \tt ~def~}}{=}}}

\def\setof#1{{\left\{#1\right\}}}
\def\suchthat#1#2{\setof{\,#1\mid#2\,}} 
\def\event#1{\setof{#1}}
\def\q={\quad=\quad}
\def\qq={\qquad=\qquad}
\def\calA{{\cal A}}
\def\calB{{\cal B}}
\def\calC{{\cal C}}
\def\calD{{\cal D}}
\def\calE{{\cal E}}
\def\calK{{\cal K}}
\def\calG{{\cal G}}
\def\calI{{\cal I}}
\def\calF{{\cal F}}
\def\calH{{\cal H}}
\def\calL{{\cal L}}
\def\calN{{\cal N}}
\def\calM{{\cal M}}
\def\calP{{\cal P}}
\def\calR{{\cal R}}
\def\calS{{\cal S}}
\def\calT{{\cal T}}
\def\calU{{\cal U}}
\def\calV{{\cal V}}
\def\calO{{\cal O}}
\def\calX{{\cal X}}

\def\U{{\mathbb U}}
\def\E{{\calE}}
\def\T{{\calF}}
\def\XX(#1){{{#1}^\downarrow}}

\def\calY{{\cal Y}}
\def\calZ{{\cal Z}}
\def\s{\footnotesize}
\def\calNG{{\cal N_G}}
\def\psfile[#1]#2{}
\def\psfilehere[#1]#2{}
\def\epsfw#1#2{\includegraphics[width=#1\hsize]{#2}}
\def\assign(#1,#2){\langle#1,#2\rangle}
\def\edge(#1,#2){(#1,#2)}
\def\VS{\calV^s}
\def\VT{\calV^t}
\def\slack(#1){\texttt{slack}({#1})}
\def\barslack(#1){\overline{\texttt{slack}}({#1})}
\def\NULL{\texttt{NULL}}
\def\PARENT{\texttt{PARENT}}
\def\GRANDPARENT{\texttt{GRANDPARENT}}
\def\TAIL{\texttt{TAIL}}
\def\HEADORIG{\texttt{HEAD$\_\:$ORIG}}
\def\TAILORIG{\texttt{TAIL$\_\:$ORIG}}
\def\HEAD{\texttt{HEAD}}
\def\CURRENTEDGE{\texttt{CURRENT$\!\_\:$EDGE}}

\def\unitvec(#1){{{\bf u}_{#1}}}
\def\uvec{{\bf u}}
\def\vvec{{\bf v}}
\def\Nvec{{\bf N}}
\def\r{{\bf r}}

\newcommand{\bg}{\mbox{$\bf g$}}
\newcommand{\bh}{\mbox{$\bf h$}}

\newcommand{\bx}{\mbox{$x$}}
\newcommand{\by}{\mbox{\boldmath $y$}}
\newcommand{\bz}{\mbox{\boldmath $z$}}
\newcommand{\bu}{\mbox{\boldmath $u$}}
\newcommand{\bv}{\mbox{\boldmath $v$}}
\newcommand{\bw}{\mbox{\boldmath $w$}}
\newcommand{\bvarphi}{\mbox{\boldmath $\varphi$}}
\newcommand{\balpha}{\mbox{\boldmath $\alpha$}}

\newcommand\myqed{{}}

\newcommand{\IBFSFS}{{IBFS$^{\mbox{~\!\tiny FS}}$}}
\newcommand{\IBFSAL}{{IBFS$^{\mbox{~\!\tiny AL}}$}}
\newcommand{\BKFS}{{BK$^{\mbox{~\!\tiny FS}}$}}
\newcommand{\BKAL}{{BK$^{\mbox{~\!\tiny AL}}$}}

\newcommand{\XL}{{X_{\le L}}}

\def\br#1{{\llbracket #1 \rrbracket}}

\newcommand{\mik}[1]{{\textcolor{blue}{#1}}}
\newcommand{\vnk}[1]{{\textcolor{olive}{#1}}}

\newcommand{\jef}[1]{{\textcolor{red}{#1}}}

\newcommand{\newjef}[1]{
    \noindent\textcolor{olive}{\ding{43} \textbf{New:}} \textcolor{olive}{#1}
}

\title{\Large\bf  }
\author{}
\title{
\Large\bf  \vspace{-20pt} Computing singular solutions of polynomial systems: \\ towards superlinear convergence without deflation
}
\author{
  Mikhail Karapetyants \qquad Vladimir Kolmogorov \qquad Jeferson Zapata \\
  \normalsize Institute of Science and Technology Austria (ISTA) \\
  \texttt{\{mikhail.karapetyants,vnk,jeferson.zapata\}@ist.ac.at}
}


\date{}
\maketitle
\begin{abstract}
In Numerical Algebraic Geometry (NAG) isolated solutions of polynomial systems are usually computed by tracking a solution curve defined by a homotopy equation. The tracking problem becomes especially challenging close to a singular root (the ``endgame'' regime). 
Existing approaches include power series endgames, Cauchy endgames, and various methods that regularize the system via dual-space-based {\em deflation}.
We make the following contributions.

(1) For corank-1 systems we introduce a new ``Arclength Endgame''  which combines the idea of the classical {\em pseudo-arclength continuation method} with the estimation of the Puiseux series of the curve. We formally prove that it has a superlinear rate of convergence in some neighborhood of the root. The method uses only evaluations of the system and its Jacobian, whereas previous techniques with proven superlinear convergence (such as deflation) require computing additional derivatives of the system.

(2) For systems with a larger corank we propose a heuristic ``Lifted Arclength Endgame'', which shows promising experimental results.

(3) A key step in our approach (as well as in the standard power series endgame) is estimating the Puiseux series of the curve, which is characterized by fractional exponents $k_i/c$ for $i\ge 1$ together with associated coefficients. Previous work addressed only estimating the ratio $k_1/c$. We present a new method for that which empirically appears to be more stable than previous methods, and also show how to estimate $k_i/c$ for $i\ge 2$. 

\end{abstract}


\section{Introduction}\label{section:Introduction}
We consider the problem of numerically solving a system of equations $f(z)=0$
with a zero-dimensional set of solutions.
Here $f$ is an analytic mapping $\mathbb C^n\rightarrow\mathbb C^n$.
A standard approach in Numerical Algebraic Geometry (NAG) for tackling this problem is as follows.
First, one constructs a homotopy function $h(z,t)=(1-t)f(z)+t g(z)$
where $g$ is a polynomial system with easily computable roots.
Consider one such root $z_{\texttt{root}}$.
If $g$ is chosen generically then there exists a unique smooth function $z:(0,1]\rightarrow \mathbb C^n$
with $z(1)=z_{\texttt{root}}$ and $h(z(t),t)=0$ for all $t\in(0,1]$. Furthermore, if $z((0,1])$ is bounded then the limit $z^\ast=z(0)=\lim_{t\rightarrow 0}z(t)$ exists
and is a root of $f$. The latter condition will always be satisfied if system $f$ is homogeneous.
If $g$ is chosen to have sufficiently many roots then every root of $f$ will be covered with probability 1 \parencite[Theorem 8.4.1]{SommeseWampler}.

By differentiating equation $h(z(t),t)=0$ with respect to $t$ one obtains Davidenko ODE:
\begin{equation}
\dot z(t) = - h_z (z(t),t)^{-1} h_t(z(t),t)
\end{equation}
We now need to track curve $z(t)$ by numerically solving this ODE.
 
This problem becomes especially challenging when $t$ approaches zero and $z^\ast=z(0)$ is a singular solution $z^\ast$,
i.e.\ the Jacobian $f_z(z^\ast)$ is singular.
This regime, referred to as the ``endgame'', constitutes the main focus of the present work. The following assumptions are maintained throughout the manuscript:

\begin{assumption}\label{assumption:main}
\textup{(a)} $h:\mathbb C^{n+1}\rightarrow\mathbb C^n$ is a polynomial mapping, and $f(z)=h(z,0)$. \\
\textup{(b)} Point $z^\ast\in\mathbb C^n$ is an isolated solution of $f(z)$. We denote $x^\ast=(z^\ast,0)$, $J^\ast= h_z(x^\ast)=f_z(z^\ast)$ and $h^\ast_t=h_t(x^\ast)$. 
We also let $\kappa=n-{\tt rank}(J^\ast)$ be the corank of $J^\ast$. \\
\textup{(c)} ${\tt rank}(J^\ast)<n$, i.e.\ $z^\ast$ is a singular solution of $f$. \\
\textup{(d)} $h_t^\ast$ is linearly independent of columns in $J^\ast$, i.e.\ $${\tt rank}([J^\ast|h^\ast_t])={\tt rank}(J^\ast)+1.$$ \\
\textup{(e)} There exists a finite set $\Pi$ of formal Puiseux series of the form
\begin{equation}\label{eq:Puiseux}
z(t)=z^\ast + \sum_{j=1}^\infty a_j t^{j/c} = z^\ast + a_{k_1} t^{k_1/c} + a_{k_2} t^{k_2/c} + \ldots
\end{equation}
where $c\in\mathbb N$ and $1\le k_1 < k_2 < k_3 \ldots$ is an increasing sequence of integers corresponding to non-zero coefficients $a_j$
such that:  \\
\textup{(i)} each series\footnote{
When writing $z(\cdot)\in \Pi$, we will assume with some abuse of notation
that we have chosen not only the formal series but also the specific branch $t\mapsto t^{1/c}$ used in~\eqref{eq:Puiseux}
(unless noted otherwise).
}
 $z(\cdot)\in \Pi$ is convergent in some neighborhood $\Omega_t\subseteq\mathbb C$ of~$0$ and 
satisfies  $h(z(t),t)=0$ for $t\in \Omega_t$; \\
\textup{(ii)} for every open set $U\subseteq \Omega_t$ and every $z(\cdot)\in\Pi$, function $\phi(t)=\det [h_z(z(t),t)]$ is not identically zero on $U$; \\
\textup{(iii)} there exists neighborhood $\Omega_z$ of $z^\ast$ such that for every $(z,t)\in\Omega_z\times\Omega_t$ with $h(z,t)=0$
there exists $z(\cdot)\in\Pi$ with $z(t)=z$.
\end{assumption}
\noindent 
The Puiseux parametrization in \textup{(e)} is a standard consequence of the local
parametrization theorem for complex analytic curves
(and of the assumption that $z^\ast$ is isolated root of $f$).
The additional
transversality and punctured-regularity requirements in \textup{(d)} and \textup{(e)(ii)} are
generic for the usual coefficient homotopies used in numerical algebraic
geometry; in practice they are enforced by choosing a generic start system
or by applying the standard gamma trick.

\subsection{Corank-1 problems}
Our first contribution is the following result.

\begin{theorem}\label{th:gap1:main}
Suppose that $\kappa=1$. There exist a neighborhood $\Omega$ of $x^\ast$, constants $\alpha>1,\beta_{\min}>0$ 
and an algorithm that takes point $x_\circ=(z_\circ,t_\circ)\in\mathbb C^{n+1}$, parameters $\beta,k_1^{\max}$ and
does the following:
if $\beta>\beta_{\min}$, $x_\circ\in \Omega$, $\|h(x_\circ)\|\le |t_\circ|^\beta$ and index $k_1$ of each Puiseux series $z(\cdot)\in \Pi$ satisfies $k_1\le k_1^{\max}$,
then it produces a (possibly infinite) sequence of points $x_1,\ldots,x_K$
such that \\
(i) $\|x_k-x^\ast\|\le \|x_\circ-x^\ast\|^{\alpha^k}$ for each $k\in[K]$,
and (ii) if $K<\infty$, then the last point $x=x_K=(z,t)$ satisfies $\|h(x)\|\le |t|^\beta$.
It uses 
$O(\log (1+\beta)+K)$ 
evaluations of  function $h(\cdot)$ and its Jacobian.
\end{theorem}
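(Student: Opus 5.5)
We construct the algorithm explicitly and analyse it in the corank-1 geometry. First we set up local coordinates. Since $\kappa=1$, $J^\ast$ has a one-dimensional kernel spanned by $v^\ast$. By Assumption~\ref{assumption:main}(d), the augmented matrix $[J^\ast|h^\ast_t]$ has rank $n$. Hence the $(n+1)\times(n+1)$ matrix
\[
\begin{bmatrix} J^\ast & h^\ast_t\\ w^\top & \end{bmatrix},
\]
bordered by a generic row $w$ (the ``arclength'' direction), is nonsingular, and it stays nonsingular in a neighborhood $\Omega$. The same holds with the first row block replaced by the Jacobian $h_x(x)$ at any $x$ near $x^\ast$. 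This is the key structural fact: the pseudo-arclength Newton step
\[
x \mapsto x - \begin{bmatrix} h_x(x)\\ w^\top\end{bmatrix}^{-1}\begin{bmatrix} h(x)\\ 0\end{bmatrix}
\]
converges quadratically to the solution curve $\{h=0\}$ restricted to the hyperplane $w^\top(x-x)=0$, even though $h_z$ alone is singular at $x^\ast$. In particular, from the input $x_\circ$ with $\|h(x_\circ)\|\le|t_\circ|^\beta$, a bounded number of such corrections lands on a point of the curve, which by Assumption~\ref{assumption:main}(e)(iii) lies on some branch $z(\cdot)\in\Pi$, with error $|t|^{\beta'}$. To guarantee this we take $\beta_{\min}$ large enough compared to the distance-to-curve versus residual relation, and the $O(\log(1+\beta))$ term accounts for these Newton steps.

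The main loop then does the following. Given a point $x_k$ essentially on branch $z(\cdot)$ at parameter $t_k$, we use a few nearby samples (obtained by further arclength corrections, $O(1)$ evaluations) to estimate the leading Puiseux data $k_1/c$ and $a_{k_1}$. We extrapolate along the curve toward $t=0$ to a much smaller parameter $t_{k+1}=t_k^{\alpha'}$ and predict $\hat x_{k+1}$ from the truncated series. We then correct $\hat x_{k+1}$ by arclength Newton steps. Corank 1 makes the curve near $x^\ast$ essentially a one-dimensional object whose projection onto $\ker J^\ast\oplus\mathbb C\,e_t$ is a plane curve $\tau\mapsto(\tau^{k_1},\dots)$. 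So the bordered system stays well conditioned with condition number polynomial in $|t_k|^{-1}$, and Newton's basin has radius polynomial in $|t_{k+1}|$. The bound $k_1\le k_1^{\max}$ is used to choose how many terms and samples suffice, and to fix constants uniformly over the finite set $\Pi$.

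The convergence claim (i) follows by induction. If the prediction error at $t_{k+1}$ is $O(|t_k|^{\gamma})$ with $\gamma$ larger than the basin exponent times $\alpha'$, then the corrector converges. Since $\|x_k-x^\ast\|\asymp|t_k|^{\min(1,k_1/c)}$, this gives $\|x_{k+1}-x^\ast\|\le\|x_k-x^\ast\|^{\alpha}$ for some $\alpha>1$, after shrinking $\Omega$ to absorb constants. The process stops, giving case (ii), once the residual test $\|h(x)\|\le|t|^\beta$ is met.

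We expect the main obstacle to be the quantitative estimate on the corrector. We need a lower bound on the smallest singular value of the bordered Jacobian along the curve, polynomial in $|t|$, combined with a prediction error from the estimated Puiseux exponent and coefficient that is strictly smaller than the resulting Kantorovich radius. Our first attempt will be to use Assumption~\ref{assumption:main}(e)(ii), via a Łojasiewicz-type bound $|\det h_z(z(t),t)|\gtrsim|t|^{m}$ on each branch, to control the inverse. We would then verify that the ratio-type estimates of $k_1/c$, being exact rationals with bounded denominator, are recovered exactly once $|t_k|$ is small, so that the only error is higher-order series terms.
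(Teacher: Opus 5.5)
Your architecture matches the paper's: a Puiseux-aware linear predictor, followed by Newton on $h$ bordered by a tangent hyperplane. However, the corrector analysis, which is where the superlinear rate comes from, is aimed at an obstacle that does not exist and omits the decisive step.

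\textbf{The corrector analysis.} In your first paragraph you correctly note that the bordered Jacobian $D_xh[\hat x]$ is nonsingular uniformly near $x^\ast$. This is Lemma~\ref{lemma:q-choice}: $\|(D_xh[\hat x])^{-1}\|\le\lambda$. Yet you then treat the corrector's basin as shrinking polynomially in $|t_{k+1}|$, and you propose to lower-bound its smallest singular value via $|\det h_z(z(t),t)|\ge|t|^m$. No such bound is needed. The Kantorovich radius is a constant. Moreover, the solution map $\hat x\mapsto\bar x$ of $h[\hat x](x)=0$ is $\lambda$-Lipschitz and fixes $x^\ast$, so $\|\bar x-x^\ast\|\le\lambda\|\hat x-x^\ast\|$.

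That inequality is absent from your plan. It is what converts the predictor bound $\|\hat z-z^\ast\|=O(\|z_\circ-z^\ast\|^{k_2/k_1})$ into $\|x_1-x^\ast\|\le\|x_\circ-x^\ast\|^{\alpha}$. It is also why the paper can aim directly at $\hat t=0$ rather than at $t_{k+1}=t_k^{\alpha'}$.

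Your closing condition (prediction exponent larger than $\alpha'$ times the basin exponent) is never verified. For a basin of the fixed-$t$ kind it actually fails. There Kantorovich needs initial error $o(|t_{k+1}|^{2\delta})$ with $\delta>1-k_1/c$. For $(k_1,k_2,c)=(1,2,3)$ the prediction error is of order $|t_k|^{2/3}$, which would force $\alpha'<1/2$.

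\textbf{Where the $|t|^{-\delta}$ bound belongs.} The bound on $h_z^{-1}$ is needed in the predictor, where the real work is and which your plan only gestures at. It is used to:
\begin{itemize}
\item control $\dot z$ at an approximate input point (Lemma~\ref{L_0});
\item run the fixed-$t$ Newton for the second sample at $t_1=(1-|t|^\gamma)t$, which needs $\gamma>\delta+\frac{\Delta-1}{2}$;
\item keep the sample errors $o(|t|^{\Lambda+\gamma+1})$, so that the ratio rule returns $k_1/c$ exactly.
\end{itemize}
Also, $k_1\le k_1^{\max}$ bounds the numerator, not the denominator. That is what isolates $k_1/c$ among the candidates $p/q$.

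\textbf{Output and cost structure.} You do not establish the output/cost structure required by the statement. In the paper, $x_1,\dots,x_K$ are not successive predict--correct cycles. They are the Newton iterates of a single corrector run, from index $\ell=\lceil 2\log_2(1+\beta)\rceil$ on.

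$K=\infty$ can occur because the limit $\bar x$ may have $|\bar t|$ arbitrarily small (even $\bar x=x^\ast$), so that $\|h(x)\|\le|t|^\beta$ is never reached. Theorem~\ref{th:gap1-corrector}(b) shows that any iterate still failing this test already lies within $\|\hat x-x^\ast\|^{\Theta(2^{k-\ell})}$ of $x^\ast$. This yields (i) at $O(1)$ evaluations per output point, hence the total $O(\log(1+\beta)+K)$.

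Your loop does not achieve this, whichever way it is read:
\begin{itemize}
\item If each cycle corrects until the residual test holds, the process stops after one cycle. You would then still need a lower bound on $|\bar t|$, which you do not provide, to cap that corrector at $O(\log(1+\beta))$ steps.
\item If the cycles continue with the input accuracy $\|h\|\le|t|^\beta$ that your predictor analysis requires, each cycle costs on the order of $\log(1+\beta)$ evaluations. The total is then $O(K\log(1+\beta))$, which is the corollary rather than the theorem.
\end{itemize}
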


A recursive application of the algorithm in Theorem~\ref{th:gap1:main} immediately gives an algorithm with a superlinear convergence rate.
\begin{corollary}
Suppose that $\kappa=1$. There exist a neighborhood $\Omega$ of $x^\ast$, constants $\alpha>1,\beta_{\min}>0$ 
and an algorithm that takes point $x_\circ=(z_\circ,t_\circ)\in\mathbb C^{n+1}$, parameters $\beta,k_1^{\max}$ and
does the following:
if $\beta>\beta_{\min}$, $x_\circ\in \Omega$, $\|h(x_\circ)\|\le |t_\circ|^\beta$ and index $k_1$ of each Puiseux series $z(\cdot)\in \Pi$ satisfies $k_1\le k_1^{\max}$,
then it produces a sequence of points $x_1,x_2,\ldots$
such that $\|x_k-x^\ast\|\le \|x_\circ-x^\ast\|^{\alpha^k}$ for each $k\ge 1$.
Computing each subsequent point uses 
$O(\log (1+\beta))$ 
evaluations of  function $h(\cdot)$ and its Jacobian.
\end{corollary}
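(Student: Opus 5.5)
The plan is to call the algorithm of Theorem~\ref{th:gap1:main} repeatedly, with the last output point of each call used as the start of the next one. Let $\Omega$, $\alpha$, $\beta_{\min}$ be as in Theorem~\ref{th:gap1:main}. Since only the convergence guarantee should carry over, I will shrink the neighborhood. Choose $\Omega'\subseteq\Omega$ to be a ball around $x^\ast$ of radius $r<1$ with $r\le$ the radius of a ball centered at $x^\ast$ contained in $\Omega$. Then $\|x-x^\ast\|\le\|x_\circ-x^\ast\|<r$ implies $x\in\Omega'\subseteq\Omega$. The corollary will be stated with $\Omega'$, the same $\alpha$, and the same $\beta_{\min}$.

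The algorithm runs Theorem~\ref{th:gap1:main} on $x_\circ$ and emits the produced points $x_1,\dots,x_{K_1}$ in order. If $K_1=\infty$ we are done, since the guarantee $\|x_k-x^\ast\|\le\|x_\circ-x^\ast\|^{\alpha^k}$ holds for every $k$. If $K_1<\infty$, part (ii) gives $\|h(x_{K_1})\|\le|t_{K_1}|^\beta$. Part (i) together with $\|x_\circ-x^\ast\|<r<1$ gives $\|x_{K_1}-x^\ast\|\le\|x_\circ-x^\ast\|^{\alpha^{K_1}}<r$, so $x_{K_1}\in\Omega'$. Hence $x_{K_1}$ satisfies all the hypotheses, and the Puiseux condition $k_1\le k_1^{\max}$ does not depend on the start point. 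We restart from $x'_\circ=x_{K_1}$ and continue in the same way.

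The key step is checking that the exponents compose. After a restart, the new outputs satisfy $\|x'_j-x^\ast\|\le\|x_{K_1}-x^\ast\|^{\alpha^j}\le(\|x_\circ-x^\ast\|^{\alpha^{K_1}})^{\alpha^j}=\|x_\circ-x^\ast\|^{\alpha^{K_1+j}}$. Relabelling $x'_j$ as $x_{K_1+j}$ gives the bound with global index $k=K_1+j$. An induction over the restarts gives the bound for every $k\ge1$. We must also rule out a call that returns $K=0$ points, which would make the procedure stall. If this can happen, I would modify the wrapper: a call with $K=0$ would have to be handled by the algorithm itself, since Theorem~\ref{th:gap1:main} allows $K\ge 0$ only formally. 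I would assume, or arrange, that each call yields $K\ge1$, for instance by one extra step inside the algorithm.

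The main obstacle is the cost bound per point. A call producing $K\ge1$ points costs $O(\log(1+\beta)+K)$ evaluations, which is $O(\log(1+\beta))$ per point, amortized, since $K\ge1$. To get a worst-case bound per point, I would note that the $O(\log(1+\beta))$ overhead is paid once per call, before its first output point, and each further point in that call costs $O(1)$. So every emitted point costs at most $O(\log(1+\beta))$ evaluations beyond the previous one. If the statement is only meant on average, the amortized count already suffices.
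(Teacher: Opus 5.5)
Your proposal is correct and follows the paper's argument, which consists of the single remark that a recursive application of the algorithm of Theorem~\ref{th:gap1:main}, restarting from the last output point, gives the corollary. Your shrinking of $\Omega$ to a ball of radius $<1$, the exponent composition $\|x_{K_1}-x^\ast\|^{\alpha^j}\le\|x_\circ-x^\ast\|^{\alpha^{K_1+j}}$, and your per-point cost accounting (overhead $O(\log(1+\beta))$ before the first output of each call, then $O(1)$ per Newton step) fill in details the paper leaves implicit; the $K=0$ worry is moot, because part (ii) refers to a last point $x_K$ and the paper's algorithm always returns at least one point.
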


The algorithm in Theorem~\ref{th:gap1:main} is achieved by combining two steps.
\begin{itemize}
\item Predictor phase: estimate coefficients $z^\ast,k_1/c,a_{k_1}$ of a Puiseux series $z(\cdot)\in\Pi$, 
then evaluate the obtained approximation of $z(\cdot)$ at $t=0$ obtaining predictor point $\hat x=(\hat z,0)$. 
We use a new rule for estimating $k_1/c$, and formally analyze the accuracy of the resulting approximation.
\item Corrector phase: augment the system $h(x)=0$ with a new linear equation given by a hyperplane passing through $\hat x$,
whose normal is tangent to
the solution curve.
Solve the augmented system by applying several steps of the Newton's method.
\end{itemize}
The idea of adding an extra hyperplane is common in the {\em (pseudo)-arclength} continuation methods \parencite{wempner1971discrete,riks1972application,keller1977numerical}.
Our scheme differs in the choice of the step size.
To our knowledge, existing arclength methods control the step size via parameter $\Delta s$ 
representing the Euclidean length along the solution curve.
We are not aware of methods that explicitly combine an arclength method together with a Puiseux series-aware predictor.

Due to this connection, we call our method the {\em arclength endgame}, even though it does not use the Euclidean length of the curve in any way.

\myparagraph{Related work}
Below we discuss papers that give an explicit superlinear convergence rate when $t\rightarrow 0$.
One approach is the  {\em deflation} technique \parencite{ojika1983deflation,LeykinVerscheldeZhao2006,LeykinVerscheldeZhao2008},
which introduces new auxiliary variables and new equations
such that $z^\ast$ becomes an isolated \textbf{nonsingular} root of the extended system.
By classical results, applying the Newton's method for such system would give an algorithm with a quadratic convergence
rate.
In general, the size of the extended system can be $\Theta(n2^\mu)$ where $\mu$ is the multiplicity of $z^\ast$.
For corank-1 systems, more efficient techniques (without an exponential dependence on $\mu$)
with a guaranteed quadratic convergence rate have been
proposed in \textcite{LiZhi2012,LiZhi2022}.

Note that these techniques rely on computing polynomials in the Max Noether space;
these are polynomials that are linear combinations of higher-order derivatives of the original equations, and evaluate to $0$ at $z^\ast$.
Thus, they require computing additional derivatives on the input system.
In contrast, the algorithm in Theorem~\ref{th:gap1:main} uses only evaluations of function $h$ and its Jacobian.

\subsection{Corank $\kappa\ge 2$}
Let us now consider systems with $\kappa\ge 2$. We investigate a heuristic algorithm that can be viewed
as an extension of the arclength endgame. Given an initial point $x_\circ=(z_\circ,t_\circ)$,
we first compute predictor $\hat x=(\hat z,\hat t)$. We then introduce $\kappa-1$ new variables $\xi$
and $\kappa$ new linear hyperplanes passing through $\hat x$, obtaining an extended system with Jacobian $J$ satisfying $\|J^{-1}\|\le O(1)$. 
This system is solved using several steps of the Newton's method, producing new point $(v,t,\xi)$.
Experimentally, we observed that usually this step has a superlinear convergence rate \\ (i.e.\ $\|v-z^\ast\|\le \|z_\circ-z^\ast\|^{1+\Theta(1)}$),
assuming that we are in the endgame zone. However, new point $(v,t)$ is no longer on the homotopy $h$.
To continue, we change the homotopy to $h^v(z,t)=f(z)-tf(v)$, and continue the process starting with $(v,1)$.
Note that this becomes an algorithm for refining a solution close to $z^\ast$,
rather than for following a specified homotopy. 

We call this procedure a {\em lifted arclength endgame}. We investigate its properties in section~\ref{section:section_LAL}, and compare with the classical power-series endgame.

\subsection{Estimating coefficients of the Puiseux series}
Computing the $\ell$-th order predictor requires estimating $z^\ast$ and ratios $k_i/c$ together with coefficients $a_{k_i}$ for $i\in [\ell]$.
A classical approach \cite{SommeseWampler, BatesHauensteinSommeseWampler2013} 
uses either a linear predictor or a cubic predictor computed via a Hermite interpolation from two sample points $z_1,z_2$ on the homotopy curve
and their derivatives.
However, it generally assumes a dense, sequential set of fractional exponents, i.e.\ that $(k_1,k_2,k_3)=(1,2,3)$. 
This assumption fails to capture the geometry of sparse Puiseux series, where the valid fractional powers are strictly governed by the value semigroup of the local ring at the singularity \parencite{Zariski1965, wall2004singular}. 
Additionally, as demonstrated by polyhedral endgames \cite{Huber1995}, the vector of leading fractional exponents $(k_1/c,k_2/c,\ldots)$ represents a fundamental geometric property of the variety. 
In this framework, the fractional exponents defining the path direction correspond directly to the inner normals of the facets characterizing the system's Newton polytope.

While it is possible to estimate for a given polynomial homotopy $h$ the set of vectors of leading fractional exponents for the Puiseux series in $\Pi$,
tracking a homotopy curve $z(\cdot)\in \Pi$ requires the vector corresponding to this specific curve.
To our knowledge, existing techniques are limited to estimating the first fractional exponent $k_1/c$.
This includes the trial-and-error method (``\cSORTcode''), which evaluates prediction errors across 
a range of candidate integer values for $c$ \cite{MSW1992, SommeseWampler}.
Another established technique is the geometric sequence sampling approach (``\cLOGcode'')  
which isolates the leading fractional exponent by analyzing the logarithmic differences of path samples taken at geometrically decreasing 
parameter values \cite{bates2011parallel, SommeseWampler}. Yet another classical approach uses the Cauchy integral method \cite{bates2011parallel, SommeseWampler}.

We make the following contributions.
\begin{itemize}
\item We propose a new method for estimating $k_1/c$, and show empirically that it can be more stable than \cSORTcode\ and \cLOGcode.
\item We show how to estimate higher-order fractional exponents, in particular $k_2/c$ and $k_3/c$.
\end{itemize}

\section{Background and notation}

For a function $F:\mathbb C^n\rightarrow \mathbb C^m$
and variables $x=(u,v)$ the Jacobian of $F$ with respect to $u$ is denoted either as $F_u(x)$
or as $D_u F(x)$. Both $x$ and $F(x)$ are treated as column vectors.

The Hermitian transpose of matrix $A$ is denoted as $A^\dag$.

Notations $z$ and $z(\cdot)$ will denote different objects: $z(\cdot)$ is a function, while $z$ is a specific value which is not necessarily related to $z(\cdot)$.

Throughout this paper, for a point $(z,t)\in\mathbb C^{n+1}$ we denote 
\begin{equation}
\label{eq:dot-z}
	\dot{z}= - h_z (z(t),t)^{-1} h_t(z(t),t)
\end{equation}
This definition depends also on $t$; the value of $t$ should always be clear from the context.
Note, if $z=\bar z(t)$ for a differentiable function $\bar z(\cdot)$ satisfying
$h(\bar z(t),t)=0$ in some neighborhood of $t$ then $\dot z=\frac d {dt} \bar z(t)$.

We define variety $\calV\subseteq\mathbb C^{n+1}$ as $\calV=h^{-1}(0,0)$.

If $t_1,t_2$ are complex values in $\mathbb C$ then $[t_1,t_2]$ denotes the interval $\{\alpha t_1 + (1-\alpha) t_2\::\:\alpha\in[0,1]\subset\mathbb R\}$.

Notation $f(t)=O(g(t))$ for complex-valued functions $f,g$ will mean that $\|f(t)\|\le C \|g(t)\|$ if $|t|\le t_{\max}$,
for some constants $C>0$ and $t_{\max}>0$.

\subsection{Power-series endgame}\label{section:Background}
One classical approach to tracking the path close to the root is the {\em power-series endgame}. It
maintains a set of pairs of the form $$\calX=\{x_i=(z_i,t_i)\}_{i=0,1,\ldots}$$ where $t_i\in(0,1]$ and $z_i$ approximates $z(t_i)$.
At each step it does the following.
 
\begin{itemize}
\item Using pairs in $\calX$,
estimate the first $\ell+1$ coefficients of series~\eqref{eq:Puiseux} together with ratios $k_i/c$, obtaining approximation
\begin{equation}\label{eq:Puiseux:approx}
\hat z(t)= \hat z^\ast + \hat a_{k_1} t^{k_1/c} + \ldots + \hat a_{k_\ell} t^{k_\ell/c} .
\end{equation}
\item Select ``target'' value $\hat t\in(0,1]$. Usually one takes $\hat t =\rho\cdot t_{\circ}$ where $t_{\circ}$ is the smallest value present in $\calX$,
and parameter $\rho\in(0,1)$ is either fixed or updated adaptively based on the success / failure status of previous steps.
\item Predictor step: compute vector $\hat z=\hat z(\hat t)$.
\item Corrector step: compute $z$ by applying several steps of the Newton's method to solve system $h(z,\hat t)=0$ using $\hat z$ as the starting point.
If the Newton's method converges according to a certain criterion
then add pair $(z,\hat t)$ to $\calX$.
\end{itemize}

Popular choices for the predictor are a {\em linear predictor} (that estimates $z^\ast,a_{k_1},k_1/c$)
and a {\em cubic predictor} (that assumes that $(k_1,k_2,k_3)=(1,2,3)$ and estimates $z^\ast,a_{k_1},a_{k_2},a_{k_3},c$). 
We refer to Section~\ref{section:PredBounds} for a further discussion of predictors.


\subsection{Newton's method}
In this section we state the classical Kantorovich theorem about convergence of the Newton's method
which we will need later
\parencite{FerreiraSvaiter}.

\begin{theorem}\label{th:NK}

    Let $X$ and $Y$ be Banach spaces, $\Omega$ be a subset of $X$ and $F$ be a continuous non-linear operator, 
    $F: \Omega \mapsto Y$, such that $F$ is continuously Fr\'{e}chet-differentiable on $\inter \left( \Omega \right)$. 
    For an initial guess $x_0 \in \Omega$ and for positive reals $L, C \in \mathbb{R_+}$ assume that
    \begin{itemize}
        
        \item $F'(x_0)$ is non-singular;

        \item $\left\| \left[ F'(x_0) \right]^{-1} \Big( F'(x) - F'(y) \Big) \right\| \ \leq \ L \| x - y \| \qquad \forall x, y \in \Omega$;

        \item $\left\| \left[ F'(x_0) \right]^{-1} F(x_0) \right\| \ \leq \ C$;

        \item $2 C L \ < \ 1$.
        
    \end{itemize}

    Consider $r\in[r_-,r_+]$ where
    \[
    r_- \ = \ \frac{1 - \sqrt{1 - 2 C L}}{L}, \qquad r_+ \ = \ \frac{1 + \sqrt{1 - 2 C L}}{L}.
    \]
    If $B(x_0, r) = \{x\in X\::\|x-x_0\| < r\:\} \subset \Omega$ then the sequence $\{ x_k \}$ generated by Newton's method for solving non-linear equation $F(x) = 0$ with initial point $x_0$,
    \[
    x_{k+1} \ = \ x_k - \left[ F'(x_k) \right]^{-1} F(x_k) \qquad \forall k \geq 0,
    \]
    is contained in $B(x_0, r)$, converges to the unique zero $x^* \in B(x_0, r)$ of $F$ and the following error bound holds:
    \[
    \| x_{k+1} - x^* \| \ \leq \ \frac{L}{2 \sqrt{1 - 2 C L}} \| x_k - x^* \|^2 \qquad \forall k \geq 0
    \]
    
\end{theorem}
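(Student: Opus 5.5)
Since this is the classical Kantorovich theorem (stated after \textcite{FerreiraSvaiter}), I would prove it with the standard \emph{majorant function} argument. First normalize: put $G(x)=[F'(x_0)]^{-1}F(x)$. Newton iterates for $G$ and $F$ coincide, $G'(x_0)=I$, $\|G'(x)-G'(y)\|\le L\|x-y\|$ on $\Omega$, and $\|G(x_0)\|\le C$. Introduce the scalar majorant $\varphi(s)=\tfrac L2 s^2-s+C$. Its roots are exactly $r_-<r_+$, and $r_-\le 1/L$ because $2CL<1$. Scalar Newton from $s_0=0$, $s_{k+1}=s_k-\varphi(s_k)/\varphi'(s_k)$, is well defined and increases monotonically to $r_-$, since $\varphi$ is convex and decreasing on $[0,r_-]$.

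The core step is an induction on $k$ establishing, simultaneously:
\begin{itemize}
\item[(a)] $\|x_k-x_0\|\le s_k$;
\item[(b)] $G'(x_k)$ is invertible with $\|G'(x_k)^{-1}\|\le -1/\varphi'(s_k)=1/(1-Ls_k)$;
\item[(c)] $\|G(x_k)\|\le\varphi(s_k)$;
\item[(d)] $\|x_{k+1}-x_k\|\le s_{k+1}-s_k$.
\end{itemize}
Item (b) follows from the Banach perturbation lemma, because $\|G'(x_k)-I\|\le L\|x_k-x_0\|\le Ls_k<1$. Item (d) is (b) combined with (c). For (c) at step $k+1$, use the Newton identity $G(x_k)+G'(x_k)(x_{k+1}-x_k)=0$ to write
\[
G(x_{k+1})=\int_0^1\bigl(G'(x_k+\tau(x_{k+1}-x_k))-G'(x_k)\bigr)(x_{k+1}-x_k)\,d\tau .
\]
Its norm is at most $\tfrac L2\|x_{k+1}-x_k\|^2\le\tfrac L2(s_{k+1}-s_k)^2$, and this equals $\varphi(s_{k+1})$ by the exact Taylor expansion of the quadratic $\varphi$. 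All segments involved lie in $B[x_0,r_-]$, which is where the hypothesis $B(x_0,r)\subset\Omega$ is used.

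From (d), $\{x_k\}$ is Cauchy, since $\sum_k(s_{k+1}-s_k)=r_-<\infty$. Its limit $x^\ast$ satisfies $G(x^\ast)=0$ by (c) and continuity, because $\varphi(s_k)\to 0$. By (a), all iterates lie in $B[x_0,r_-]$.

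For uniqueness, suppose $y\in B(x_0,r)$ is another zero. Write $0=G(y)-G(x^\ast)=A(y-x^\ast)$ with $A=\int_0^1G'(x^\ast+\tau(y-x^\ast))\,d\tau$. Then
\[
\|A-I\|\le L\int_0^1\|x^\ast+\tau(y-x^\ast)-x_0\|\,d\tau\le \tfrac L2\bigl(\|x^\ast-x_0\|+\|y-x_0\|\bigr)<\tfrac L2(r_-+r_+)=1,
\]
so $A$ is invertible and $y=x^\ast$.

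For the quadratic bound, write
\[
x_{k+1}-x^\ast=G'(x_k)^{-1}\bigl[G(x^\ast)-G(x_k)-G'(x_k)(x^\ast-x_k)\bigr].
\]
The bracket has norm at most $\tfrac L2\|x_k-x^\ast\|^2$. The inverse has norm at most $1/(1-Ls_k)\le 1/(1-Lr_-)=1/\sqrt{1-2CL}$, which gives exactly the stated constant.

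The main obstacle is the bookkeeping of the induction: making sure every segment on which the Lipschitz bound is applied stays inside $\Omega$. A related edge case is $r=r_-$, where the ball is open but iterates only satisfy $\|x_k-x_0\|\le s_k<r_-$. The limit may sit on the boundary, so I would verify strictness, or handle the closure by continuity of $G'$.
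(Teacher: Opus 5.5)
Your induction is the standard majorant-function proof of Kantorovich's theorem. The paper gives no proof of its own and defers to Ferreira--Svaiter, where the theorem is proved by this same kind of comparison with scalar Newton on $\varphi(s)=\tfrac L2 s^2-s+C$. The key steps all check out: $\varphi(s_{k+1})=\tfrac L2(s_{k+1}-s_k)^2$, the uniqueness bound $\|A-I\|<\tfrac L2(r_-+r_+)=1$, and $1-Lr_-=\sqrt{1-2CL}$. So your argument is complete whenever $r>r_-$.

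The one thing to fix is your closing remark about $r=r_-$. You cannot ``verify strictness'' there, because the statement as written is false at that endpoint. Take $X=Y=\mathbb{R}$, $\Omega=\mathbb{R}$, $x_0=0$ and $F=\varphi$. All hypotheses hold, and the Newton iterates are exactly $s_k\uparrow r_-$. But $F$ has no zero in the open ball $B(0,r_-)$, and with $\Omega=B(0,r_-)$ the limit is not even in the domain.

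The endpoint must therefore be stated as in the usual form of the theorem. Assume $\{x:\|x-x_0\|\le r_-\}\subset\Omega$, and conclude only that $x^\ast$ lies in this closed ball, with uniqueness in any $B(x_0,r)\subset\Omega$ with $r\le r_+$. Under that hypothesis your proof goes through essentially unchanged, using continuity of $G$ at $x^\ast$ for segments ending there. The paper's later applications only use existence within distance $r_-$ and uniqueness in $B(\cdot,r_+)$, so they are unaffected.
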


\section{Linear predictor}\label{sec:linear-predictor}
To prove Theorem~\ref{th:gap1:main}, we will use a linear predictor that estimates $z^\ast$, $a_{k_1}$ and $k_1/c$.
In this section we will analyze the accuracy of this predictor, assuming in particular that the input points
satisfy the homotopy only approximately. Later on, in Section~\ref{section:PredBounds}, we will analyze higher-order predictors
(but only assuming that the input points lie exactly on the homotopy curve).

Our predictor will depend on parameters~$\gamma,\beta,k_1^{\max}$; these are positive constants that will be specified later.
Given input point $x=(z,t)$, it does the following.
\begin{enumerate}
\item Set $t_1=(1-|t|^{\gamma}) \cdot t$.
\item Run the Newton's method to solve the system $h(z,t_1)=0$ starting with a point $$z_0=z+\dot z (t_1-t),$$
until getting a point $z_1$ with $\|h(z_1,t_1)\|\le |t_1|^\beta$.
\item Find positive integers $c,k_1$ with $k_1\le k_1^{\max}$ that minimize $$\left| \frac{\|t\dot z(t)-t_1\dot z(t_1)\|}{\|z(t)-z(t_1)\|} - \frac  {k_1}c\right|.$$
\item Output predictor $\hat z=z - \tfrac c {k_1}  \dot z t$.
\end{enumerate}

We will prove the following result.
\begin{theorem}\label{th:linear-predictor}
There exist constants $\gamma_{\min}>0$, $\eta>0$ with the following property.
Suppose that $\gamma>\gamma_{\min}$, $\beta>\eta\gamma$, and index $k_1$ of each Puiseux series $z(\cdot)\in\Pi$ satisfies $k_1\le k_1^{\max}$.
Then there exists a neighborhood $\Omega$ of $x^\ast$ such that
any $x = (z, t)\in\Omega$ with $\| h(x) \| \le |t|^\beta$ satisfies the following. \\
\textup{ (i)}  $\| \hat z - z^* \| = O\left( \| z - z^* \|^{k_2/k_1} \right)$
where $k_1,k_2$ are the indices in eq.~\eqref{eq:Puiseux} of the Puiseux series $z(\cdot)\in\Pi$ with the smallest ratio $k_2/k_1$. \\
\textup{ (ii)} The Newton's method in step 2 terminates after $O(\log (1+\beta))$ iterations. \\
If $\kappa=1$ then  $\gamma_{\min}<1$.
\end{theorem}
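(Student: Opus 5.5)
The strategy is to compare the input point $x=(z,t)$, which satisfies the homotopy only approximately, with the exact point $z(t)$ of a nearby Puiseux branch $z(\cdot)\in\Pi$. We then show that every quantity the predictor uses is a small perturbation of the corresponding exact quantity.

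\textbf{Local geometry.} For each branch, Assumption~\ref{assumption:main}(e)(ii) and the Puiseux expansion give $\det h_z(z(t),t)\sim C t^{m/c}$ for some $m\ge 0$. By Cramer's rule, $\|h_z(z(t),t)^{-1}\| = O(|t|^{-p})$ for a constant $p$ depending only on the finite set $\Pi$; this also holds uniformly on a tube $\|z'-z(t)\|\le |t|^{q}$ for $q$ large. A Newton--Kantorovich argument (Theorem~\ref{th:NK}) at $(z(t),t)$ then shows that if $\|h(z,t)\|\le |t|^\beta$ with $\beta>\eta\gamma$ large, then $z$ lies within $O(|t|^{\beta-p})$ of $z(t)$ for some branch. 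Assumption~\ref{assumption:main}(e)(iii) guarantees this branch exists and is unique. Consequently $\dot z$ at $x$ differs from $\frac{d}{dt}z(t)$ by $O(|t|^{\beta-2p-1})$.

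\textbf{Step 2, part (ii).} Since $|t_1-t|=|t|^{1+\gamma}$, the Taylor predictor $z_0$ has error $O(|t|^{\beta-2p}+|t|^{2+2\gamma}\sup\|\ddot z\|)$ from $z(t_1)$, where $\|\ddot z\|=O(|t|^{k_1/c-2})$. Once $\gamma$ exceeds a constant multiple of $p$, this error is much smaller than the Kantorovich radius $\sim |t|^{2p}$ at $(z(t_1),t_1)$. Newton's method therefore converges quadratically, and reaching residual $|t_1|^\beta$ from the initial residual $|t|^{\Theta(1)}$ takes $O(\log(1+\beta))$ iterations.

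\textbf{Ratio estimate and step 3.} Using the exact series, $t\dot z(t)=\sum_j \tfrac{j}{c}a_jt^{j/c}$. Hence $t\dot z(t)-t_1\dot z(t_1)=\tfrac{k_1}{c}a_{k_1}(t^{k_1/c}-t_1^{k_1/c})+O(|t|^{k_2/c-1}|t-t_1|)$, while $z(t)-z(t_1)=a_{k_1}(t^{k_1/c}-t_1^{k_1/c})+\ldots$. Both leading differences are of order $|t|^{k_1/c+\gamma}$. The ratio therefore equals $k_1/c+O(|t|^{(k_2-k_1)/c})$ plus perturbation errors of order $|t|^{\beta-O(p+\gamma)}$, which are negligible for $\beta>\eta\gamma$. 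Admissible ratios $k/c'$ with $k\le k_1^{\max}$ are separated by a positive gap. For $t$ small, rounding thus recovers $k_1/c$ exactly; the individual integers do not matter because only the ratio enters.

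\textbf{Step 4, part (i).} With the exact ratio, $\hat z-z^\ast=\sum_{j\ge k_2}(1-\tfrac{j}{k_1})a_jt^{j/c}+O(|t|^{\beta-O(p)})=O(|t|^{k_2/c})$. Moreover $\|z-z^\ast\|\asymp |t|^{k_1/c}$, so $\|\hat z-z^\ast\|=O(\|z-z^\ast\|^{k_2/k_1})$. Taking the worst branch gives the smallest $k_2/k_1$.

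\textbf{Corank one.} For $\kappa=1$, Assumption~\ref{assumption:main}(d) lets me bound $\|h_z^{-1}\|$ more sharply. The singular direction has smallest singular value $\sim |t|^{1-k_1/c}$, since the kernel direction pairs with $h_t$. This should give $p<1$ effectively in the relevant estimates, so $\gamma_{\min}<1$ suffices.

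\textbf{Main obstacle.} I expect the hardest part to be making the perturbation bounds uniform, in particular the inverse-Jacobian growth $O(|t|^{-p})$ on a tube around each branch. This uniformity is what determines $\gamma_{\min}$ and $\eta$. It is also delicate because $t_1$ moves only by $|t|^{1+\gamma}$, so the numerator and denominator in step 3 are tiny differences that the approximation errors must not swamp.
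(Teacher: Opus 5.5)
Most of your outline follows the paper's route: the Cramer's-rule bound on $\|h_z^{-1}\|$ along branches, the Taylor start $z_0$ with error $|t|^{2\gamma+k_1/c}$ compared against the Kantorovich scale $|t|^{2p}$, the ratio $k_1/c+O(|t|^{(k_2-k_1)/c})$, isolation of $k_1/c$, cancellation of the leading term, and the SVD mechanism for $\kappa=1$. The first step, however, has a genuine gap. You need that any $x=(z,t)\in\Omega$ with $\|h(x)\|\le|t|^\beta$ has $z$ within a high power of $|t|$ of $\bar z(t)$ for some branch $\bar z(\cdot)\in\Pi$. Your Newton--Kantorovich argument centered at $(z(t),t)$ only controls points already in a tube $\|z-z(t)\|\le|t|^q$, where the inverse Jacobian is $O(|t|^{-p})$. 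It says nothing about approximate zeros outside every tube, where $h_z(z,t)$ can be arbitrarily ill-conditioned. Running Kantorovich from $z$ itself would need a bound on $\|h_z(z,t)^{-1}\|$, which you only have once $z$ is known to be near a branch, so that is circular. Assumption~\ref{assumption:main}(e)(iii) does not fill the gap either: it describes exact zeros of $h$, not points with small residual.

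The missing ingredient is a quantitative ``small residual implies close to $\calV$'' estimate that is uniform as $t\to 0$. The paper (Lemma~\ref{L_0}) gets it from the {\L}ojasiewicz inequality $\texttt{dist}(x,\calV)\le C\|h(x)\|^\theta$. This gives some $x'=(z',t')\in\calV$ with $\|x-x'\|\le C|t|^{\theta\beta}$. Since in general $t'\ne t$, the paper then transports along the branch $\bar z(\cdot)$ through $x'$ via the mean value theorem, using $\|\dot{\bar z}(\tau)\|=O(|t|^{-\delta})$ (your $p$) for $|\tau|=\Theta(|t|)$. This is also where the constant $\eta$ in $\beta>\eta\gamma$ comes from: one needs $\theta\beta>\alpha+\delta$ with $\alpha=\Theta(\gamma)$, and $\theta$ may be small. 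Your linear-loss bound $\|z-z(t)\|=O(|t|^{\beta-p})$ is valid once this localization has placed $z$ inside a tube, but not before.

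A minor point: the admissible ratios $k/c'$ with $k\le k_1^{\max}$ are not uniformly separated, since they accumulate at $0$. What you need, and what holds, is that each of the finitely many true values $k_1/c>0$ is isolated among them.
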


The remainder of this section is devoted to the proof of this theorem. In these proofs we will often omit
the phrase ``there exists a neighborhood $\Omega$ of $x^\ast$ such that ...'', making it implicit.
For example, we will write $O(|t|^a)\le |t|^b$ when $a>b$; this would hold if $|t|$ is sufficiently small.
Also, in each lemma we will implicitly assume that the current $\Omega$ is contained in the neighborhoods considered in all previous statements.
One of them is the neighborhood $\Omega_z\times\Omega_t$ defined in Assumption~\ref{assumption:main}(e),
so all Puiseux series $z(\cdot)\in\Pi$ will be assumed to be convergent in the considered neighborhood.

First, we analyze
what happens when the points lie exactly on the curve. 
\begin{lemma}\label{thm:BoundLinPred}
Consider Puiseux series $z(\cdot)\in\Pi$ associated with integers $c,k_1,k_2$.
There exists a neighborhood $\Omega$ of $x^\ast$ such that points $x=(z,t)=(z(t),t)\in\Omega$ satisfy the following.
\\
\textup{ (a)}
$
\lim\limits_{t\rightarrow 0}\sup\limits_{t_1\in[0,t)} \;\; \left| \frac{\|t\dot z(t)-t_1\dot z(t_1)\|}{\|z(t)-z(t_1)\|} - \frac  {k_1}c\right| = 0
$.
\\
\textup{ (b)} $\|\hat z-z^\ast\|=O(\|z-z^\ast\|^{k_2/k_1})$ assuming that $\hat z$ was computed with the correct value of $k_1/c$.
\end{lemma}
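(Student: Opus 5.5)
The plan is to work directly with the convergent Puiseux expansion. For $(z(t),t)$ on the curve, the derivative $\dot z$ defined in \eqref{eq:dot-z} coincides with $\frac{d}{dt}z(t)$ for $t\neq 0$ in the neighborhood. This uses Assumption~\ref{assumption:main}(e)(ii): $h_z$ is invertible along the branch except at isolated points, which we avoid, or handle by continuity. Termwise differentiation then gives
\[
t\dot z(t)=\sum_{i\ge1}\tfrac{k_i}{c}\,a_{k_i}t^{k_i/c},\qquad
z(t)-z^\ast=\sum_{i\ge1}a_{k_i}t^{k_i/c}.
\]

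\textbf{Part (a).} Set $s=t^{1/c}$ and $s_1=t_1^{1/c}$, with the branch chosen consistently along the segment $[0,t)$, so that $s_1=\lambda s$ with $\lambda\in[0,1)$ real. Write $\phi(s)=t\dot z(t)$ and $\psi(s)=z(t)-z^\ast$. Then
\[
t\dot z(t)-t_1\dot z(t_1)=\sum_i \tfrac{k_i}{c}a_{k_i}s^{k_i}\bigl(1-\lambda^{k_i}\bigr),\qquad
z(t)-z(t_1)=\sum_i a_{k_i}s^{k_i}\bigl(1-\lambda^{k_i}\bigr).
\]
Factor out $s^{k_1}(1-\lambda^{k_1})$ from both sums. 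The remaining tail terms have the form $a_{k_i}s^{k_i-k_1}\cdot\frac{1-\lambda^{k_i}}{1-\lambda^{k_1}}$. The key elementary bound is
\[
\frac{1-\lambda^{m}}{1-\lambda^{k_1}}\le \frac{m}{k_1}\quad\text{for } m\ge k_1,\ \lambda\in[0,1),
\]
which follows from writing both as geometric sums $\sum\lambda^j$, or as concavity in $\mu=\lambda^{k_1}$. Consequently each tail is bounded by $\sum_i \frac{k_i}{k_1}\|a_{k_i}\||s|^{k_i-k_1}=O(|s|^{k_2-k_1})$, uniformly in $\lambda$; this uses convergence of the derivative series on a slightly smaller disc. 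Hence the numerator equals $s^{k_1}(1-\lambda^{k_1})\bigl(\frac{k_1}{c}a_{k_1}+O(|s|^{k_2-k_1})\bigr)$ and the denominator equals $s^{k_1}(1-\lambda^{k_1})\bigl(a_{k_1}+O(|s|^{k_2-k_1})\bigr)$. Since $a_{k_1}\ne0$, the ratio of norms is $\frac{k_1}{c}+O(|t|^{(k_2-k_1)/c})$ uniformly in $t_1$, which gives (a). Establishing this uniformity in $\lambda$ near $1$, where both differences vanish, is the main obstacle, and the inequality above is exactly what resolves it.

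\textbf{Part (b).} Compute
\[
\hat z-z^\ast=\psi-\tfrac{c}{k_1}\phi=\sum_{i\ge2}\Bigl(1-\tfrac{k_i}{k_1}\Bigr)a_{k_i}t^{k_i/c}=O\bigl(|t|^{k_2/c}\bigr).
\]
Meanwhile $\|z-z^\ast\|=|t|^{k_1/c}\bigl(\|a_{k_1}\|+O(|t|^{(k_2-k_1)/c})\bigr)\ge \tfrac12\|a_{k_1}\||t|^{k_1/c}$ for small $t$. Thus $|t|^{k_2/c}=O(\|z-z^\ast\|^{k_2/k_1})$, and substituting this into the previous display gives (b). Finally, we shrink the neighborhood $\Omega$ so that all the series converge there and all the $O(\cdot)$ constants hold.
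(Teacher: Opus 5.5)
Your proposal is correct and follows essentially the same route as the paper: for (a) you expand both differences via the Puiseux series and control the higher-order terms uniformly in $t_1$ using $\frac{1-\lambda^{m}}{1-\lambda^{k_1}}\le \frac{m}{k_1}$, which the paper uses in the form $|\sigma/\tau|=O(|t|^{(k_2-k_1)/c})$; for (b) you note that the $a_{k_1}$ terms cancel in $z-\frac{c}{k_1}t\dot z$ and then invert $\|z-z^\ast\|=\Theta(|t|^{k_1/c})$. Applying that inequality termwise to the whole tail, together with convergence of the differentiated series, is if anything more careful than the paper's one-line $O(\sigma)$ bound on the remainder.
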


\begin{proof}
\myparagraph{Part (a)}
Plugging the Puiseux series into $z(\cdot)$ for coordinate $i\in[n]$ yields
\begin{align*}
z(t)[i] - z(t_1)[i] &= a_{k_1}[i](t^{k_1/c} - t_1^{k_1/c}) + O(t^{k_2/c} - t_1^{k_2/c}) = a_{k_1}[i] \cdot \tau + O(\sigma) \\
t\dot z(t)[i] - t_1\dot z(t_1)[i] &= \frac{k_1}{c}\cdot a_{k_1}[i](t^{k_1/c} - t_1^{k_1/c}) + O(t^{k_2/c} - t_1^{k_2/c}) = \frac{k_1}{c}\cdot  a_{k_1}[i] \cdot \tau + O(\sigma)
\end{align*}
where we denoted $\tau = t^{k_1/c} - t_1^{k_1/c}$, $\sigma=t^{k_2/c} - t_1^{k_2/c}$.
Therefore,
\begin{align*}
\|z(t) - z(t_1)\| &=  \|a_{k_1}\| \cdot |\tau| + O(\sigma) \\
\|t\dot z(t) - t_1\dot z(t_1)\| &= \frac{k_1}{c}\cdot  \|a_{k_1}\| \cdot |\tau| + O(\sigma)
\end{align*}

Note that 
$
\frac \sigma \tau= t^{(k_2-k_1)/c} \cdot \frac {1-(t_1/t)^{k_2/c}}{1-(t_1/t)^{k_1/c}}
$
and hence $\left|\frac \sigma \tau\right|=O(|t|^{(k_2-k_1)/c})$
(since $\frac{1-\rho^{k_2/c}}{1-\rho^{k_1/c}}\le \frac {k_2}{k_1}$ for $\rho\in[0,1)$).
This implies the claim.

\myparagraph{Part (b)}
Assuming we have successfully extracted the exact leading exponent ratio ${k_1}/{c}$, the target prediction $\hat{z}$ (aiming for $t=0$) is computed via the linear ideal predictor:
\[
\hat{z} = z(t) - \frac{c}{k_1} t\dot{z}(t)
\]
Substituting the series expansions into this predictor equation we have
\begin{align*}
\hat{z} &= z^* + \sum_{j=k_{1}}^{\infty} a_{j} t^{j/c} -\frac{c}{k_1}\sum_{j=k_{1}}^{\infty} \frac{j}{c} a_{j} t^{j/c}  \\
&= z^* + \left(1 - \frac{k_2}{k_1}\right) a_{k_2} t^{k_2/c} + {O}(t^{k_3/c})
\end{align*}

The leading terms $a_{k_1} t^{k_1/c}$ cancel exactly. Therefore, isolating the error gives
\[
\|\hat{z} - z^*\| = {O}(t^{k_2/c}) \quad \text{as } t \to 0.
\]

To express this error in terms of the distance to the root, we invert the leading term of the path expansion. Since $\|z(t) - z^*\| = \|a_{k_1}\| t^{k_1/c} + {O}(t^{k_2/c})$, we can asymptotically bound the parameter $t$ as:
\[
t^{1/c} = {O}\left(\|z(t) - z^*\|^{1/k_1}\right)
\]
Substituting this relation back into our predictor error bound produces the final geometric bound
\[
\|\hat{z} - z^*\| = {O}\left( \left( \|z(t) - z^*\|^{1/k_1} \right)^{k_2} \right) = {O}\left( \|z(t) - z^*\|^{k_2/k_1} \right)
\]

\end{proof}



Next, we analyze the existence and behaviour of matrix $h^{-1}_z(z(t),t)$ in a neighborhood of $0$.
\begin{lemma}\label{lemma:hz-bound}
Consider formal series $z(\cdot)\in\Pi$ associated with integers $k_1,c$.
There exists a constant $\delta>0$ and 
a punctured neighborhood $\Omega$ of $x^\ast$
such that for any $z(\cdot)\in\Pi$ and $x=(z,t)=(z(t),t)\in\Omega$
matrix $h_z(x)$ is nonsingular, and
there holds
 $\|h^{-1}_z(x)\|\le |t|^{-\delta}$, $\|\dot z(t)\|=O(|t|^{k_1/c-1})$ and $\|\ddot z(t)\|=O( |t|^{k_1/c-2})$.
 Furthermore, there exists component $i\in[n]$ such that $|\dot z_i(t)|=\Theta(|t|^{k_1/c-1})$.
 
If $\kappa=1$ then $\delta\in(0,1)$ and $k_1/c< 1$.
\end{lemma}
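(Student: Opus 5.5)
The plan is to work one series at a time. Since $\Pi$ is finite, constants and neighborhoods can be taken as the worst case over its members. Fix $z(\cdot)\in\Pi$ and write $s=t^{1/c}$. Then $s\mapsto z(s^c)$ is analytic near $s=0$, so $\phi(s)=\det h_z(z(s^c),s^c)$ is an analytic function of $s$. By Assumption~\ref{assumption:main}(e)(ii) it is not identically zero, so $\phi(s)=\phi_m s^m(1+O(s))$ with $\phi_m\neq 0$ for some integer $m\ge 0$. Hence on a punctured neighborhood $h_z$ is nonsingular and $|\phi|\ge C|t|^{m/c}$. Cramer's rule then gives
\[
h_z^{-1}=\operatorname{adj}(h_z)/\phi,
\]
where the adjugate is bounded near $x^\ast$. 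This yields $\|h_z^{-1}\|\le C'|t|^{-m/c}\le |t|^{-\delta}$ for any $\delta>m/c$ and $|t|$ small. We take $\delta$ to be slightly larger than the maximum of $m/c$ over $\Pi$.

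The derivative bounds follow by differentiating the convergent series termwise. We have $\dot z(t)=\sum_j (j/c)a_j t^{j/c-1}$ and $\ddot z(t)=\sum_j (j/c)(j/c-1)a_j t^{j/c-2}$, so $\|\dot z\|=O(|t|^{k_1/c-1})$ and $\|\ddot z\|=O(|t|^{k_1/c-2})$. Since $h_z$ is invertible, $\dot z(t)$ as defined in \eqref{eq:dot-z} agrees with $\frac{d}{dt}z(t)$ (differentiate $h(z(t),t)=0$). For the $\Theta$ claim, pick $i$ with $a_{k_1}[i]\neq 0$. Then $\dot z_i(t)=(k_1/c)a_{k_1}[i]t^{k_1/c-1}(1+O(|t|^{1/c}))$, which is $\Theta(|t|^{k_1/c-1})$.

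The main obstacle is the corank-1 refinement: showing $k_1/c<1$ and $m/c<1$, so that $\delta\in(0,1)$ can be chosen. For $k_1/c<1$, suppose $k_1\ge c$. Then $z$ is differentiable at $t=0$ (Lipschitz in $t$, with $\dot z(0)=a_c$ if $k_1=c$ and $0$ otherwise). Differentiating $h(z(t),t)=0$ gives $J^\ast\dot z(0)+h_t^\ast=0$, which contradicts Assumption~\ref{assumption:main}(d) because $h_t^\ast\notin\operatorname{range}J^\ast$. To make this rigorous, I would expand $h(z(t),t)$ to first order with remainder $O(|t|^{2k_1/c}+|t|^{1+k_1/c})=o(|t|)$ and compare the coefficients of $t^1$.

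For $m$, I would use $\kappa=1$. Let $u,w$ span the left and right kernels of $J^\ast$, and choose bases that split off the rank-$(n-1)$ part. Then $\det h_z(x)$ equals, up to a nonzero constant and a factor $1+O(\cdot)$, the Schur-complement scalar $u^\dag h_z(x)w+O(\|x-x^\ast\|^2)$, so its order in $t$ is the order of that scalar. Projecting the identity $h(z(t),t)=0$ with $u^\dag$ and differentiating in $t$ gives
\[
u^\dag h_z\,\dot z=-u^\dag h_t=-u^\dag h_t^\ast+o(1),
\]
where $u^\dag h_t^\ast\ne 0$ by (d). The leading term of $\dot z$ is $\Theta(|t|^{k_1/c-1})$. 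Writing $\dot z$ along $w$ and the complement, the complement component is controlled by the invertible block, and I expect this to force $|u^\dag h_z w|\asymp|t|^{1-k_1/c}$. This would give $m/c=1-k_1/c<1$ and hence $\delta\in(0,1)$. Making the block elimination precise, especially controlling the complement component of $\dot z$ relative to the $w$-component, is the step that needs the most care.
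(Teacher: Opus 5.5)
\textbf{Verdict: genuine gap in the corank-1 step ($\delta<1$); the rest is sound.} The general part (Cramer's rule in $s=t^{1/c}$, termwise differentiation) matches the paper. Your proof of $k_1/c<1$ by contradiction with Assumption~\ref{assumption:main}(d) is correct. It differs from the paper, which gets $k_1/c<1$ from $\|\dot z\|\to\infty$ via an SVD expansion, and yours does not even need $\kappa=1$.

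\textbf{What fails.} The scalar you plan to estimate, $\omega=u^\dag h_z(x)w$ with $u,w$ the fixed null vectors of $J^\ast$, is in general \emph{not} of order $|t|^{1-k_1/c}$. Write $U=[U_1\;u]$, $V=[V_1\;w]$ and $U^\dag h_z(x)V=\begin{pmatrix}A&b\\ e^\dag&\omega\end{pmatrix}$. Only the Schur complement $\mu=\omega-e^\dag A^{-1}b$ has that order. The correction $e^\dag A^{-1}b=O(\|x-x^\ast\|^2)=O(|t|^{2k_1/c})$ can dominate when $k_1/c$ is small. For the same reason the complement component $V_1^\dag\dot z$ is not bounded; it is of order $\|x-x^\ast\|/|\mu|$. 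So the cross term $e^\dag V_1^\dag\dot z$ in $u^\dag h_z\dot z=-u^\dag h_t$ is not negligible.

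\textbf{Counterexample.} Take $f(z)=(z_1-z_2^2,\;z_1^2-z_2^4+z_2^5)$ and $h(z,t)=f(z)-t(0,1)^\top$. Then $J^\ast=\mathrm{diag}(1,0)$, $u=w=e_2$, and (d) holds. The branches are $z(t)=(t^{2/5},t^{1/5})$, so $k_1/c=1/5$. Along the curve, $\omega=-4z_2^3+5z_2^4\asymp|t|^{3/5}$, while $\det h_z=5z_2^4\asymp|t|^{4/5}=|t|^{1-k_1/c}$. Moreover $\dot z_1=\tfrac25t^{-3/5}$ is unbounded. The terms $2z_1\dot z_1$ and $-4z_2^3\dot z_2$ of the projected identity are $\pm\tfrac45t^{-1/5}$ and cancel. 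So the estimate you expect is false, and the argument cannot be finished along the lines sketched.

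\textbf{The fix.} Eliminate the complement component instead of bounding it, and work with $\mu$ rather than $\omega$. Set $V^\dag\dot z=(y_1,y_2)$ and $U^\dag h_t=(g_1,g_2)$.
\begin{itemize}
\item The first block row gives $y_1=-A^{-1}(g_1+by_2)$.
\item Substituting into the last row yields $\mu\,y_2=-(g_2-e^\dag A^{-1}g_1)$, whose right-hand side tends to $-u^\dag h_t^\ast\neq0$.
\item Since $|y_2|\le\|\dot z\|=O(|t|^{k_1/c-1})$, this gives $|\mu|^{-1}=O(|t|^{k_1/c-1})$.
\item The block-inverse formula (off-diagonal blocks are $O(\|x-x^\ast\|)$) gives $\|h_z^{-1}\|=O(1+|\mu|^{-1})=O(|t|^{k_1/c-1})$, so any $\delta\in(1-k_1/c,1)$ works.
\end{itemize}
Only the upper bound on $\|\dot z\|$ is used, and no two-sided estimate of $m$ is needed. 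This is the paper's argument in block form. The paper uses the SVD of $h_z(x)$ itself and reads off $\sigma_n^{-1}|u_n^\dag h_t|\le\|\dot z\|+O(1)$, with $|u_n^\dag h_t|=\Theta(1)$ by continuity of the singular vector $u_n$.
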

\begin{proof}

Define function $\tilde z(s)=z(s^c)$.
This is an analytic function at $0$, as it is given by a convergent power series at some neighborhood of $0$.
Also, $z(t)=\tilde z(t^{1/c})$ for some branch $t\mapsto t^{1/c}$.

Define $J(t)=h_z(z(t),t)$ and $\tilde J(s)=J(s^c)$. 
Note that the entries of matrix $\tilde J$ are analytic functions of $s$ since $\tilde J(s)=h_z(\tilde z(s),s^c)$,
and thus $\det \tilde J(s)$ is also an analytic function of $s$.
By Assumption~\ref{assumption:main}, $\det \tilde J(s)$ is not identically zero,
therefore $\det \tilde J(s)=s^d \varphi(s)$ for some
integer $d\ge 1$ and analytic function $\varphi(s)$ with $\varphi(0)\ne 0$.
In particular, $\det \tilde J(s)\ne 0$ in some punctured neighborhood of $0$.
By Cramer's rule, $\tilde J(s)^{-1}=\frac{{\texttt{adj}} \tilde J(s)}{\det \tilde J(s)}=\frac{{\texttt{adj}} \tilde J(s)}{\varphi(s)}\cdot s^{-d}$.
Function $\frac{{\texttt{adj}} \tilde J(s)}{\varphi(s)}$ is analytic at $0$, therefore
$\|\tilde J(s)^{-1}\|=O(|s|^{-d})$ and hence $\|h_z(z(t),t)^{-1}\|=\|\tilde J(t^{1/c})^{-1}\|=O(|t|^{-d/c})<|t|^{-\delta}$
for any fixed $\delta > d/c$.

By differentiating the formal series~\eqref{eq:Puiseux} we obtain $\dot z(t)=\tfrac{k_1}c a_{k_1} t^{k_1/c-1} (1+o(1))$ and
$\ddot z(t)=\tfrac{k_1}c(\tfrac{k_1}c-1) a_{k_1} t^{k_1/c-2} (1+o(1))$.
This implies that $\|\dot z(t)\|=O(|t|^{k_1/c-1})$ and $\|\ddot z(t)\|=O( |t|^{k_1/c-2})$,
and also $|\dot z_i(t)|=\Theta(|t|^{k_1/c-1})$ for all components $i\in[n]$ with $(a_{k_1})_i\ne 0$.

Let us now assume that $\kappa=1$.
Let $J^\ast=U^\ast \Sigma^\ast (V^\ast)^\dag$ and $h_z= U \Sigma V^\dag$ be \glspl{SVD}  of $J^\ast=h_z(x^\ast)$ and
$h_z(x)$ respectively, with 
$\Sigma^\ast={\texttt{diag}}(\sigma^\ast_1,\ldots,\sigma^\ast_n)$, $\sigma^\ast_1\ge\ldots\ge \sigma^\ast_{n-1}>\sigma^\ast_n=0$,
$\Sigma={\texttt{diag}}(\sigma_1,\ldots,\sigma_n)$, $\sigma_1\ge \ldots\ge \sigma_n\ge 0$.
Let $\{u^\ast_i\},\{v^\ast_i\},\{u_i\},\{v_i\}$ be the columns of $U^\ast,V^\ast,U,V$ respectively.
	Vector $u^\ast_n$ is the left singular vector of $J^\ast$ for value $\sigma^\ast_n=0$ (i.e. $(u_n^\ast)^\dag J^\ast=0$);
by Assumption~\ref{assumption:main}(d), we have $(u^\ast_n)^\dag h^\ast_t \ne 0$.

It follows from Wedin's theorem \parencite[Theorem 4.1]{MatrixPerturbationTheory} that $\sigma_n,u_n,v_n$ depend continuously on matrix~$h_z$
(as long as singular value $\sigma_n$ has multiplicity 1).
Therefore, there exists a neighborhood of $x^\ast$ in which points $x\in \calV$ satisfy
$ \sigma_1 \ge \ldots \ge \sigma_{n-1}\ge \Theta(1)$, $\|h_t\|= \Theta(1)$ and 
 $|u_n^\dag h_t|= \Theta(1)$.
For points $x=(z,t)\ne x^\ast$ in this neighborhood we have
\begin{equation}\label{eq:GLAKHDGA}
  \begin{aligned}
    \dot z 
    &= -h_z^{-1} h_t = -(V \Sigma^{-1} U^\dag) h_t \\
    &= -\sum_{i=1}^n \sigma_i^{-1} v_i u_i^\dag h_t \\
    &= \left(-\sum_{i=1}^{n-1} \sigma_i^{-1} v_i u_i^\dag h_t\right) - \left(\sigma_n^{-1} v_n u_n^\dag h_t\right)
  \end{aligned}
\end{equation}
The norm of the first term in~\eqref{eq:GLAKHDGA} is bounded by a constant in a neighborhood of $0$,
while the norm of the second term goes to infinity as $t\rightarrow 0$
(since $\sigma_n$ goes to zero, $\|v_n\|=1$ and  $|u_n^\dag h_t|= \Theta(1)$). This implies that $\lim_{t\rightarrow 0} \|\dot z\|=+\infty$.
Since $\|\dot z(t)\|=\Theta(|t|^{k_1/c-1})$, we must have $k_1/c<1$.

From~\eqref{eq:GLAKHDGA} we get
$$
\sigma_n^{-1} v_n u_n^\dag h_t = -\dot z - \sum_{i=1}^{n-1} \sigma_i^{-1} v_i u_i^\dag h_t
$$
Taking norms gives
$$
\sigma_n^{-1} \cdot\|v_n\| \cdot |u_n^\dag h_t| \le \|\dot z\| + \sum_{i=1}^{n-1} \sigma_i^{-1} \|v_i\|  \cdot \|u_i\|\cdot \| h_t\|
$$
We have $\|v_i\|=\|u_i\|=1$, and so $\|h_z^{-1}\|=\sigma_n^{-1}\le \frac{1}{\Theta(1)}(O(|t|^{k_1/c-1}) + O(1))=O(|t|^{k_1/c-1})$.
Thus, any constant $\delta>1-k_1/c$ will satisfy the claim of the lemma, so we can indeed choose $\delta<1$.
\end{proof}
Let us fix constants $\delta>0,\Delta<1,\Lambda>-1$ such
that for each formal series $z(\cdot)\in\Pi$
we have $\delta>\delta^{z(\cdot)}$, $\Delta>1-\tfrac{k_1^{z(\cdot)}}{c^{z(\cdot)}}$,
$\Lambda>\tfrac{k_1^{z(\cdot)}}{c^{z(\cdot)}}-1$ (here the superscript $z(\cdot)$ denotes the value associated with formal series $z(\cdot)$, and value $\delta^{z(\cdot)}$ comes from Lemma~\ref{lemma:hz-bound}.)
By Lemma~\ref{lemma:hz-bound}, the following holds for all $x=(z,t)\in\calV$ in some punctured neighborhood of $x^\ast$:
\begin{subequations} \label{X}
\begin{eqnarray}
\|h^{-1}_z(x)\| & \le & |t|^{-\delta} \label{Xa} \\
\|\dot z\| & \le & |t|^{-\Delta} \label{Xb} \\
\|\ddot z\| & \le & |t|^{-\Delta-1} \label{Xc} 
\end{eqnarray}
\end{subequations}
Note, if $\kappa=1$ then we can have $\delta<1$ and $\Lambda<0$.

We define $\gamma_{\min}=\delta+ \tfrac{\Delta-1}2$. Note, if $\kappa=1$ then $\gamma_{\min}<1$.
We thus assume from now on that
\begin{equation}\label{eq:gamma-choice}
\gamma > \delta+ \tfrac{\Delta-1}2
\end{equation} 

\begin{lemma}\label{L_0}
For any constant $\alpha>0$ there exist another constant $\beta>0$ and neighborhood $\Omega$ of $x^\ast$ 
satisfying the following:
if $x=(z,t)\in\Omega$ and $\|h(x)\|\le |t|^\beta$
then 
there exists  Puiseux series $\bar z(\cdot)\in\Pi$ such that
$\|z-\bar z\|\le |t|^{\alpha}$ and $\|\dot z-\dot{\bar z}\|\le|t|^{\alpha-2\delta}$ where $\bar z=\bar z(t)$.
\end{lemma}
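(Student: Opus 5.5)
The plan is to first find a nearby point of the variety $\calV$ via a Łojasiewicz inequality, then transport that point along its Puiseux branch back to the parameter value $t$, and finally compare the two values of $\dot z$ by matrix perturbation.

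\emph{Step 1 (nearby point on $\calV$).} Since $h$ is polynomial (hence analytic), the Łojasiewicz inequality at $x^\ast$ gives constants $C,\theta>0$ and a neighborhood of $x^\ast$ on which
\[
{\rm dist}(x,\calV)\le C\|h(x)\|^\theta .
\]
Hence for $x=(z,t)$ with $\|h(x)\|\le|t|^\beta$ there is $x'=(z',t')\in\calV$ with $\|x-x'\|\le C|t|^{\theta\beta}$. We take $\beta$ so large that $\theta\beta>1$. Then $|t-t'|\le C|t|^{\theta\beta}\le |t|/2$ for small $|t|$, so $t'\neq 0$ and every point of the segment $[t,t']$ has modulus in $[|t|/2,\,3|t|/2]$. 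Shrinking $\Omega$ ensures $x'\in\Omega_z\times\Omega_t$. By Assumption~\ref{assumption:main}(e)(iii) there is then $\bar z(\cdot)\in\Pi$ with $\bar z(t')=z'$.

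\emph{Step 2 (transport to parameter $t$).} The segment $[t',t]$ avoids $0$, so the branch $t\mapsto t^{1/c}$ used for $\bar z(\cdot)$ continues analytically along it. The bound~\eqref{Xb} on $\|\dot{\bar z}\|$ then holds along the segment with constant $2^\Delta$, since all points there have modulus at least $|t|/2$. (If the continued branch differs from the originally chosen one, we simply regard it as another choice of branch for the same formal series, as allowed by the footnote convention.) This gives
\[
\|z-\bar z(t)\|\le \|z-z'\|+\|\bar z(t')-\bar z(t)\|\le C|t|^{\theta\beta}+2^{\Delta}|t|^{-\Delta}\cdot C|t|^{\theta\beta}.
\]
The right-hand side is at most $|t|^{\alpha'}$ for any target $\alpha'$, provided $\theta\beta>\alpha'+\Delta$ and $|t|$ is small.

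\emph{Step 3 (derivatives).} Put $\bar z=\bar z(t)$ and $E=h_z(z,t)-h_z(\bar z,t)$. Because $h$ is polynomial, $h_z$ and $h_t$ are locally Lipschitz, so $\|E\|=O(|t|^{\alpha'})$ and $\|h_t(z,t)-h_t(\bar z,t)\|=O(|t|^{\alpha'})$. By~\eqref{Xa}, $\|h_z(\bar z,t)^{-1}\|\le|t|^{-\delta}$. Choosing $\alpha'>\delta$, a Neumann-series argument shows that $h_z(z,t)$ is invertible with
\[
\|h_z(z,t)^{-1}\|\le 2|t|^{-\delta},\qquad \|h_z(z,t)^{-1}-h_z(\bar z,t)^{-1}\|\le 2|t|^{-2\delta}\|E\|.
\]
Writing
\[
\dot z-\dot{\bar z}=-\bigl(h_z(z,t)^{-1}-h_z(\bar z,t)^{-1}\bigr)h_t(z,t)-h_z(\bar z,t)^{-1}\bigl(h_t(z,t)-h_t(\bar z,t)\bigr),
\]
and using that $\|h_t\|$ is bounded near $x^\ast$, we get $\|\dot z-\dot{\bar z}\|=O(|t|^{\alpha'-2\delta})+O(|t|^{\alpha'-\delta})$.

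To remove the hidden constants, we run the argument with $\alpha'=\alpha+\epsilon$ for a small $\epsilon>0$ (also requiring $\alpha'>\delta$). Then $\|z-\bar z\|\le|t|^\alpha$ and $\|\dot z-\dot{\bar z}\|\le|t|^{\alpha-2\delta}$ for sufficiently small $|t|$. This fixes $\beta>(\max(\alpha,\delta)+\epsilon+\Delta)/\theta$ together with $\theta\beta>1$.

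\emph{Main obstacle.} The delicate part is Step 1 combined with branch bookkeeping. The Łojasiewicz inequality yields a point $x'$ with a different parameter value $t'$. We must make sure that $t'$ stays comparable to $t$ and nonzero, so that~\eqref{Xa} and~\eqref{Xb} apply uniformly along $[t',t]$ with $|t|$-dependent rather than $|t'|$-dependent constants. We must also check that following $\bar z(\cdot)$ along the segment keeps us within a series of $\Pi$, possibly with another branch of $t^{1/c}$, that still satisfies~\eqref{X}. Using finiteness of $\Pi$ and the uniform constants $\delta,\Delta$ chosen before~\eqref{X} should handle this.
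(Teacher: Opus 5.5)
Your proposal is correct and follows essentially the same route as the paper: the {\L}ojasiewicz inequality gives a nearby point $x'\in\calV$, a mean-value bound transports the Puiseux branch from $t'$ to $t$, and a perturbation estimate for $h_z^{-1}h_t$ gives the derivative bound. The differences are minor. You bound $\|\dot{\bar z}\|$ via \eqref{Xb} rather than via \eqref{Xa} times $\|h_t\|=O(1)$, so your threshold is $\theta\beta>\alpha+\Delta$ instead of $\theta\beta>\alpha+\delta$, which is harmless. You also make the branch-continuation bookkeeping along $[t',t]$ explicit, whereas the paper leaves it implicit.
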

\begin{proof}
We can assume w.l.o.g.\ that $\alpha\ge \delta$ and $\alpha+\delta>1$ (by increasing $\alpha$, if necessary; this will not affect the claim).
By the classical {\L}ojasiewicz inequality \parencite{Lojasiewicz1959},
there exist constants $C>0$, $\theta>0$ such that every $x$ in some neighborhood of $x^\ast$ satisfies
\begin{equation}\label{eq:Lojasiewicz}
{\texttt{dist}}(x,\calV)\le C\|h(x)\|^\theta
\end{equation}
We will show the lemma for any value $\beta$ satisfying
\begin{equation}\label{ADGHAKJDSFAK}
\beta > \tfrac {\alpha+\delta} \theta
\end{equation}

Consider $x=(z,t)$ with $\|h(x)\|\le |t|^\beta$ in some neighborhood of $x^\ast$. 
By~\eqref{eq:Lojasiewicz}, there exists $x'=(z',t')\in\calV$ with $\|x-x'\|\le C\|h(x)\|^\theta\le C(|t|^\beta)^\theta=C|t|^{\bar\alpha+\delta}$
where $\bar\alpha>\alpha$ is a constant.
By shrinking the neighborhood if necessary, we can assume that $C|t|^{\bar \alpha+\delta}<\frac 12|t|$.
We have $z'=\bar z(t')$ for some Puiseux series $\bar z(\cdot)\in \Pi$.
By the mean value theorem, $\|\bar z(t)-\bar z(t')\|\le \|\dot {\bar z}(\tau)\| \cdot |t-t'|$ for some $\tau\in[t,t']$.
Since  $|t-t'|\le \|x-x'\|\le C|t|^{\bar \alpha+1}<\tfrac 12 |t|$,
we must have $|\tau|=\Theta(|t|)$. 
Denoting $\bar z=\bar z(t)$,  we get
$$
\|z-\bar z\|\le \|z-z'\| + \|z'-\bar z\| \le \|x-x'\| + \|\dot {\bar z}(\tau)\|\cdot |t-t'| \le \|x-x'\| \cdot (1 + \|\dot {\bar z}(\tau)\|)
$$
since $\max\{\|z-z'\|,|t-t'|\}\le \|x-x'\|$.
Using eq.~\eqref{Xa}, we get  $\|\dot {\bar z}(\tau)\|=\| h_z(\bar z(\tau),\tau)^{-1}  h_t(\bar z(\tau),\tau) \|\le \| h_z(\bar z(\tau),\tau)^{-1} \|\cdot \| h_t(\bar z(\tau),\tau) \| \le |\tau|^{-\delta}\cdot O(1)=O(|t|^{-\delta})$.
This yields
$$
\|z-\bar z\| \le C|t|^{\bar \alpha+\delta} \cdot (1 + O(|t|^{-\delta}))=O(|t|^{\bar \alpha})
$$
Since $\bar\alpha>\alpha$, taking a sufficiently small neighborhood will ensure that the last expression is at most $|t|^\alpha$.

To prove the bound on $\|\dot z-\dot{\bar z}\|$, we will use the following fact:
\begin{itemize}
\item {\em Suppose that $A,B\in\mathbb C^{n\times n}$, $a,b\in\mathbb C^{n\times 1}$,  $A$ is invertible and $\|A^{-1}\| \; \| A-B\|< 1$. Then
\begin{equation}
\|A^{-1}a-B^{-1}b\|\le \|A^{-1}\| \; \|a-b\|+\frac{\|A^{-1}\|^2\; \|A-B\|}{1-\|A^{-1}\| \; \|A-B\|}\|b\| \label{eq:GALSKHFDALGAS}
\end{equation}
Indeed, the assumption implies that $B$ is invertible and $\|B^{-1}\|\le \frac{\|A^{-1}\|}{1-\|A^{-1}\|\;\|A-B\|}$. We have $A^{-1}a-B^{-1}b=A^{-1}(a-b)+A^{-1}(B-A)B^{-1}b$ and hence
$\|A^{-1}a-B^{-1}b\|\le \|A^{-1}\|\;\|a-b\|+\|A^{-1}\|\; \|B-A\| \; \|B^{-1}\| \; \|b\|$, which yields~\eqref{eq:GALSKHFDALGAS}.
}
\end{itemize}
Let us plug $A=h_z(\bar z,t)$, $a=h_t(\bar z,t)$, $B=h_z(z,t)$, $b=h_t(z,t)$.
Since $h$ is analytic, we have $\|A-B\|\le O(\|z-\bar z\|)\le O(|t|^{\bar \alpha})$,
 $\|a-b\|\le O(\|z-\bar z\|)\le O(|t|^{\bar \alpha})$ and $\|b\|\le O(1)$.
By eq.~\eqref{Xa},  $\|A^{-1}\|\le |t|^{-\delta}$.
Plugging this into~\eqref{eq:GALSKHFDALGAS} gives
$$
\|\dot z-\dot{\bar z}\| 
= \|A^{-1}a-B^{-1}b\|
\le |t|^{-\delta} \cdot O(|t|^{\bar \alpha}) + \frac{|t|^{-2\delta}\cdot O(|t|^{\bar \alpha})}{1-|t|^{-\delta}\cdot O(|t|^{\bar \alpha})}\cdot O(1)
\le O(|t|^{\bar \alpha-2\delta})
$$
since $\bar \alpha>\delta$.
Since $\bar\alpha>\alpha$, taking a sufficiently small neighborhood will ensure that the last expression is at most $|t|^{\alpha-2\delta}$.
\end{proof}

We now proceed with the proof of Theorem~\ref{th:linear-predictor}.
Fix $\alpha$ that satisfies
\begin{equation}\label{eq:alpha-choice}
\alpha\;\;>\;\;\max\left\{\;\; \Lambda+\gamma+\max\{ 2\delta, 1 \} \;\;,\;\;\tfrac {k_2}c\;\;,\;\; 2\delta-1+\tfrac{k_2}{c}\;\; \right\}
\end{equation}
for coefficients $c,k_1,k_2$ of all Puiseux series $z(\cdot)\in\Pi$.
Let $\beta>0$ be the constant specified in Lemma~\ref{L_0} for this value of $\alpha$.
Note that this value can be chosen so that $\beta=\Theta(\gamma)$ (see eq.~\eqref{ADGHAKJDSFAK}).
Assume that the input point $x=(z,t)$ in the appropriate neighborhood of $x^\ast$ satisfies $\|h(x)\|\le|t|^\beta$.
By Lemma~\ref{L_0}, there exists Puiseux series $\bar z(\cdot)\in\Pi$ such that $\bar z=\bar z(t)$ satisfies
\begin{subequations}
\begin{eqnarray}
\|z-\bar z\|&\le& |t|^\alpha \\
\|\dot z-\dot{\bar z}\|&\le& |t|^{\alpha-2\delta}
\end{eqnarray}
\end{subequations}

Let us denote $\bar z_1=\bar z(t_1)$. Our next goal will be to show that point $z_1$ constructed by the algorithm
is close to $\bar z_1$, and gives the correct value of the ratio $k_1/c$. Recall that $\bar z_1$ is obtained via the Newton's method for system $F(z)=0$ where $F(z)=h(z,t_1)$.

\begin{lemma}
\textup{ (a)} The starting point $z_0=z+\dot z (t_1-t)$ satisfies $\|z_0-\bar z_1\|\le  O(|t|^{2\gamma-\Delta+1})$.
 \\
\textup{ (b)} The preconditions of Theorem~\ref{th:NK} hold with $L=\Theta(|t|^{-\delta})$, $C=\tfrac 4{9L}$.
\\
\textup{ (c)} Newton's method terminates after $O(\log (1+\beta))$ iterations.
\\
\textup{ (d)} It produces point $z_1$ satisfying $\|z_1-\bar z_1\|\le |t_1|^{\alpha}$ and $\|\dot z_1-\dot{\bar z}_1\|\le |t_1|^{\alpha-2\delta}$.
\\
\textup{ (e)} There exists a neighborhood $\Omega$ of $x^\ast$ with the following
property: if $x\in\Omega$ then the method produces the correct ratio $k_1/c$ for the Puiseux series $\bar z(\cdot)$.

\end{lemma}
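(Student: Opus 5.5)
We prove the five parts in order. Write $\epsilon=t_1-t=-|t|^\gamma t$, so $|\epsilon|=|t|^{1+\gamma}$.

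\emph{Part (a).} We compare the starting point with a Taylor step along the true curve and split
\[
z_0-\bar z_1=(z-\bar z)+(\dot z-\dot{\bar z})\epsilon+\bigl(\bar z(t)+\dot{\bar z}(t)\epsilon-\bar z(t_1)\bigr).
\]
The first two terms are bounded by Lemma~\ref{L_0}: they are $|t|^\alpha$ and $|t|^{\alpha-2\delta+1+\gamma}$. The third is a Taylor remainder. The segment $[t,t_1]$ stays at distance $\Theta(|t|)$ from $0$, so \eqref{Xc} bounds it by $\|\ddot{\bar z}\|\,|\epsilon|^2\le |t|^{-\Delta-1}|t|^{2+2\gamma}=|t|^{2\gamma-\Delta+1}$. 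The choice \eqref{eq:alpha-choice}, together with $\Lambda>-\Delta$ up to the sign conventions, should make $\alpha$ exceed $2\gamma-\Delta+1$, and also $\alpha-2\delta+1+\gamma$ exceed it. So all three terms are $O(|t|^{2\gamma-\Delta+1})$. This inequality check against \eqref{eq:alpha-choice} is routine but must be done carefully.

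\emph{Part (b).} Apply Theorem~\ref{th:NK} to $F(z)=h(z,t_1)$ on a ball around $z_0$ of radius $\Theta(|t|^{2\gamma-\Delta+1})$, which contains $\bar z_1$. Perturbation inequality~\eqref{eq:GALSKHFDALGAS} with \eqref{Xa} at $\bar z_1$ gives $\|F'(z_0)^{-1}\|\le 2|t|^{-\delta}$. This needs $|t|^{-\delta}|t|^{2\gamma-\Delta+1}\to0$, i.e.\ $2\gamma-\Delta+1>\delta$, which holds by \eqref{eq:gamma-choice}. Because $h$ is polynomial, $F'$ is Lipschitz with an $O(1)$ constant, so $L=\Theta(|t|^{-\delta})$. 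Next, $\|F(z_0)\|=\|F(z_0)-F(\bar z_1)\|=O(|t|^{2\gamma-\Delta+1})$. Hence
\[
\|F'(z_0)^{-1}F(z_0)\|=O(|t|^{2\gamma-\Delta+1-\delta}),
\]
and $CL=O(|t|^{2\gamma-\Delta+1-2\delta})\to 0$ precisely because $\gamma>\delta+\frac{\Delta-1}{2}$. So for small $|t|$ the bound $C\le\frac 4{9L}$ holds. The radius $r_-=O(C)$ is small, so the ball lies in the neighborhood where \eqref{X} applies, and the Newton limit equals $\bar z_1$ by uniqueness.

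\emph{Part (c).} The quadratic bound of Theorem~\ref{th:NK}, with factor $L/(2\sqrt{1-2CL})=O(|t|^{-\delta})$, gives $\|z_k-\bar z_1\|\le L^{-1}q^{2^k}$ with $q<1$ a constant, say $q\le 1/2$. Since $h(\bar z_1,t_1)=0$, we have $\|h(z_k,t_1)\|=O(\|z_k-\bar z_1\|)$. Requiring this to be at most $|t_1|^\beta$ needs $2^k\gtrsim \beta\log(1/|t|)/\log(1/q)$. This gives $O(\log(1+\beta)+\log\log(1/|t|))$ iterations. To remove the $\log\log$ term, we use that $q$ actually decays polynomially in $|t|$: $q=\Theta(CL)=|t|^{\Theta(1)}$. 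Then $k=O(\log(1+\beta))$ suffices, and this is where $\beta=\Theta(\gamma)$ enters.

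\emph{Part (d).} The stopping criterion $\|h(z_1,t_1)\|\le|t_1|^\beta$ puts $(z_1,t_1)$ under Lemma~\ref{L_0}. That lemma gives some $\tilde z(\cdot)\in\Pi$ with $\|z_1-\tilde z(t_1)\|\le|t_1|^\alpha$ and the corresponding derivative bound. We must identify $\tilde z(t_1)$ with $\bar z_1$. Both are zeros of $F$ within distance $o(r_+)$ of $z_0$, where $r_+\approx 2/L=\Theta(|t|^\delta)$. Theorem~\ref{th:NK} gives uniqueness of the zero in $B(z_0,r)$ for any $r\in[r_-,r_+]$, so $\tilde z(t_1)=\bar z_1$. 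The derivative bound then follows exactly as in Lemma~\ref{L_0}.

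\emph{Part (e).} This is the main obstacle. We perturb Lemma~\ref{thm:BoundLinPred}(a), which applies since $t_1\in[0,t)$. The exact ratio on $\bar z(\cdot)$ has denominator
\[
\|\bar z(t)-\bar z(t_1)\|=\Theta(|\tau|), \qquad \tau=t^{k_1/c}-t_1^{k_1/c},\qquad |\tau|=\Theta(|t|^{k_1/c+\gamma}).
\]
The numerator has the same order. Replacing the curve by the computed points changes the numerator by $O(|t|^{\alpha-2\delta+1})$ and the denominator by $O(|t|^\alpha)$. These must be $o(|t|^{k_1/c+\gamma})$, i.e.\ we need
\[
\alpha>\tfrac{k_1}{c}+\gamma+\max\{2\delta-1,0\}.
\]
This is the first term of \eqref{eq:alpha-choice}, since $\Lambda>\tfrac{k_1}{c}-1$. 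With this, the perturbed ratio tends to $k_1/c$ as $t\to0$. The admissible values $k_1'/c'$ with $k_1'\le k_1^{\max}$ and $c'$ bounded form a discrete set near $k_1/c$, so the minimizer is the correct ratio once the error is below half the gap. One point needs care: $c'$ ranges over all positive integers, so only the ratio value $k_1/c$ matters and is recovered. The exponent bookkeeping against \eqref{eq:alpha-choice} here is where the proof is most delicate.
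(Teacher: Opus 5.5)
Your route is the paper's in all five parts: the Taylor step along $\bar z(\cdot)$, Kantorovich with $L=\Theta(|t|^{-\delta})$ via \eqref{eq:GALSKHFDALGAS}, doubly exponential decay with ratio $|t|^{\Theta(1)}$, identifying the Lemma~\ref{L_0} branch by uniqueness in $B(z_0,r_+)$, and perturbing Lemma~\ref{thm:BoundLinPred}(a). Your exponent check in (e) against the first entry of \eqref{eq:alpha-choice} is correct.

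\textbf{The gap in (a).} You rightly keep the term $z-\bar z$ in $z_0-\bar z_1$; the paper's displayed identity silently drops it. But all you know is $\|z-\bar z\|\le|t|^\alpha$, and \eqref{eq:alpha-choice} does \emph{not} give $\alpha\ge 2\gamma-\Delta+1$. Using $\Lambda>-\Delta$, its first entry only yields $\alpha>\gamma-\Delta+1$: it grows like $\gamma$, while $2\gamma-\Delta+1$ grows like $2\gamma$. Concretely, take $k_1/c=\tfrac12$, $k_2/c=1$, $\delta=\Delta=0.55$, $\Lambda=-0.45$, $\gamma=1$. Then $\alpha=1.7$ satisfies \eqref{eq:alpha-choice}, yet $2\gamma-\Delta+1=2.45$. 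In some regimes, e.g.\ $k_1/c=\tfrac13$, $k_2/c=\tfrac23$ with $\gamma$ close to $\gamma_{\min}$, even $\alpha>2\delta$ is not implied, and your argument in (b) needs that once this term is present.

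\textbf{The fix.} When fixing $\alpha$, additionally require $\alpha\ge 2\gamma-\Delta+1$. Lemma~\ref{L_0} accepts any $\alpha$, and the resulting $\beta>(\alpha+\delta)/\theta$ is still $\Theta(\gamma)$. Since $2\gamma-\Delta+1>2\delta$ by \eqref{eq:gamma-choice}, this one condition also secures $\alpha>2\delta$ for (b) and $\alpha>\delta$ for (d). With it, your part (a) is complete, and it repairs the paper's argument as well.

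\textbf{Two smaller points.} First, in (b) you cannot apply Theorem~\ref{th:NK} on a ball of radius $\Theta(|t|^{2\gamma-\Delta+1})$ around $z_0$. With $C=\tfrac 4{9L}$ one has $r_-=\tfrac 2{3L}=\Theta(|t|^{\delta})$, far larger than that radius. Your own part (d) also uses uniqueness in $B(z_0,r_+)$ with $r_+=\Theta(|t|^{\delta})$. So take the domain to be a fixed neighborhood of $z^\ast$ on which $h_z(\cdot,t_1)$ is $O(1)$-Lipschitz. Second, in (e) boundedness of $c'$ is not something you may assume; it is a consequence. Any $p/q$ with $p\le k_1^{\max}$ and $|p/q-k_1/c|<\tfrac{k_1}{2c}$ has $q<2k_1^{\max}c/k_1$. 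Hence only finitely many candidates lie near $k_1/c$, and this isolates the correct ratio once the error is small enough.
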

\begin{proof}
\myparagraph{Part (a)}
First, we will show the claim assuming that $n=1$. We can write
$$
\bar z_1=\bar z+(t_1-t)\dot {\bar z} +\tfrac 12 (t_1-t)^2\ddot{\bar z}(\tau)
\qquad\Rightarrow\qquad
z_0-\bar z_1=(t_1-t)(\dot z- \dot {\bar z}) -\tfrac 12 (t_1-t)^2\ddot{\bar z}(\tau)
$$
for some $\tau\in[t_1,t]$. We have $|t_1-t|=|t|^{\gamma+1}$ and $|\ddot{\bar z}(\tau)|\le |\tau|^{-\Delta-1}$, and so
$$
|z_0-\bar z_1|\le|t|^{\gamma+1}\cdot |t|^{\alpha-2\delta} + \tfrac 12 |t|^{2(\gamma+1)}\cdot |\tau|^{-\Delta-1}
=O(|t|^{2\gamma-\Delta+1})
$$
since $\gamma+1+\alpha-2\delta\ge 2\gamma-\Delta+1$.
If $n>1$ then $|(z_0-\bar z_1)_i|\le O(|t|^{2\gamma-\Delta+1})$ for each coordinate $i\in[n]$
(by the argument above applied to the $i$-th coordinate of vectors), and hence \\
$\|z_0-\bar z_1\|\le O(|t|^{2\gamma-\Delta+1})$.

\myparagraph{Part (b)}
Since $F_z(z)=h_z(z,t_1)$ is analytic in a neighborhood of $z^\ast$, we have
$\|F_z(z')-F_z(z'')\|\le O(\|z'-z''\|)$ when $z',z''$ are in a certain neighborhood of $z^\ast$.
In particular, we have $\|F_z(z_0)-F_z(\bar z_1)\|\le O(\|z_0-\bar z_1\|)\le O(|t|^{2\gamma-\Delta+1})$.
Also, $\|F^{-1}_z(\bar z_1)\|\le O(|t_1|^{-\delta})=O(|t|^{-\delta})$ by eq.~\eqref{Xa}.
Therefore, $\|F_z(z_0)-F_z(\bar z_1)\|\;\|F^{-1}_z(\bar z_1)\|\le \tfrac 12$ if $|t|$ is sufficiently small
(since $2\gamma-\Delta+1-\delta>0$ by eq.~\eqref{eq:gamma-choice}). This implies that
$$
\|F^{-1}_z(z_0)\| \le \frac{\|F^{-1}_z(\bar z_1)\| }{1-\|F_z(z_0)-F_z(\bar z_1)\|\;\|F^{-1}_z(\bar z_1)\|}\le O(|t|^{-\delta})
$$
We conclude that $\|F^{-1}_z(z_0)(F_z(z')-F_z(z''))\|\le O(|t|^{-\delta})\cdot O(\|z'-z''\|)$ when $z',z''$ are in a certain neighborhood of $z^\ast$,
and hence the first two preconditions of Theorem~\ref{th:NK} hold with $L=\Theta(|t|^{-\delta})$.
We have $\| F(z_0) \|=\| F(z_0) - F(\bar z_1)\|\le O(\|z_0-\bar z_1\|)\le O(|t|^{2\gamma-\Delta+1})$,
and so the third precondition holds with $C\ge C_0=O(|t|^{-\delta})\cdot O(|t|^{2\gamma-\Delta+1})$.
We obtain $C_0L=O(|t|^{2\gamma-2\delta-\Delta+1})=O(|t|^\varepsilon)$ where $\varepsilon>0$ by the choice of $\gamma$.
Therefore, we can indeed set $C=\tfrac 4{9L}$ if $|t|$ is sufficiently small.

\myparagraph{Parts (c,d)}
Theorem~\ref{th:NK} yields that equation $F(z)=0$ has a unique solution in the ball $B(z_0,r)$ 
for any $r\in[r_-,r_+]=[\tfrac 2{3L},\tfrac 4{3L}]$.
We have $F(\bar z_1)=0$ and $\|z_0-\bar z_1\|\le O(|t|^{2\gamma-\Delta+1})<r_-$ if $|t|$ is sufficiently small
(since $2\gamma-\Delta+1>\delta$ by eq.~\eqref{eq:gamma-choice}), so this unique solution must be $\bar z_1$.

Let us denote $w_k=z^{(k)}-\bar z_1$ where $z^{(k)}$ is the iterate at step $k$ (with $z^{(0)}=z_0$).
Theorem~\ref{th:NK} gives that $\|w_{k+1}\|\le \bar C \cdot |t|^{-\delta} \|w_{k}\|^2$.
We have $\|w_0\|\le O(|t|^\lambda)$ where $\lambda = 2\gamma-\Delta+1 > 2\delta$,
therefore $\|w_k\|\le |t|^{O(\delta 2^k)}$.
Since $h$ is an analytic function, we have $\|h(z^{(k)},t_1)\|=\|h(z^{(k)},t_1)-h(\bar z_1,t_1)\|\le O(\|z^{(k)}-\bar z_1\|)\le |t|^{O(\delta 2^k)}$.
We conclude that for any fixed $\beta$ we will have $\|h(z^{(k)},t_1)\|\le |t_1|^\beta$ after $k=O(\log (1+\beta))$ iterations.

By Lemma~\ref{L_0}, there exists $\bar{\bar z}(\cdot)\in\Pi$ with $\|z_1-\bar{\bar z}_1\|\le |t_1|^\alpha$ and $\|\dot z_1-\dot {\bar{\bar z}}_1\|\le |t_1|^{\alpha-2\delta}$
where $\bar{\bar z}=\bar {\bar z}(t_1)$.
Since $\alpha>\delta$, we can assume that $|t_1|^\alpha < r_-=\Theta(|t|^\delta)$.
This implies that $\|z_0-\bar{\bar z}_1\|\le \|z_0-z_1\| + \|z_1-\bar{\bar z}_1\| <r_- + r_-=r_+$.
We have $F(\bar z_1)=F(\bar{\bar z}_1)=0$ and $\bar z_1,\bar{\bar z}_1\in B(z_0,r_+)$, therefore
 $\bar z_1=\bar{\bar z}_1$ and hence $\dot{\bar z}_1=\dot{\bar{\bar z}}_1$.

\myparagraph{Part (e)} Denote $A=\|t\dot z-t_1 \dot z_1\|$, $B=\|z-z_1\|$ and $R=\tfrac {k_1}c$.
Clearly, there exists constant $\varepsilon\in(0,\tfrac 12 R)$ such that $R$ is the only
rational number $p/q$ with integers $p\in[1,k_1^{\max}]$, $q\ge 1$ satisfying $|R-p/q|<\varepsilon$.
We will show that $|A/B-R|<\varepsilon$ when $x$ is in some neighborhood of $x^\ast$;
this will prove the claim.
Denote $\bar A=\|t\dot {\bar z}-t_1 \dot {\bar z}_1\|$ and $\bar B=\|\bar z-\bar z_1\|$.
By Lemma~\ref{thm:BoundLinPred} we can choose a neighborhood such that 
$|{\bar A}/{\bar B}-R|<\tfrac 12 \varepsilon$. It now suffices to show that $|A/B-{\bar A}/{\bar B}|<\tfrac 12\varepsilon$ in some neighborhood of $x^\ast$.

By Lemma~\ref{lemma:hz-bound} there exists $i\in[n]$ such that $|\dot{\bar z}_i(\tau)|=\Theta(|\tau|^{k_1/c-1})\ge \Theta(|\tau|^\Lambda)$.
By the mean value theorem, $\bar B\ge |\bar z_i(t)-\bar z_i(t_1)|=|\dot {\bar z}_i(\tau)|\cdot |t-t_1|$ for some $\tau\in[t,t_1]$.
Therefore, $\bar B \ge \Theta(|t|^\Lambda)\cdot |t|^{\gamma+1}$.
We have $|B-\bar B|\le \|z-\bar z\| + \|z_1-\bar z_1\|\le |t|^\alpha + |t_1|^\alpha=O(|t|^\alpha)=o(\bar B)$ since $\alpha>\Lambda+\gamma+1$.
Similarly, $\bar A=(R+o(1))\bar B=\Theta(|t|^{\Lambda+\gamma+1})$
and $|A-\bar A|\le t\|\dot z-\dot{\bar z}\| + t_1\|\dot z_1-\dot{\bar z}_1\|\le |t|^{1+\alpha-2\delta} + |t_1|^{1+\alpha-2\delta}=O(|t|^{1+\alpha-2\delta})=o(\bar A)$ since 
$1+\alpha-2\delta>\Lambda+\gamma+1$.

We showed that $A/B=(\bar A(1+o(1))/(\bar B(1+o(1))=(\bar A/\bar B)(1+o(1))$. The claim follows.

\end{proof}

We are now ready to prove Theorem~\ref{th:linear-predictor}.
We have $\bar z-z^\ast= a_{k_1} t^{k_1/c}(1+o(1))$, and hence $\|\bar z-z^\ast\|=\Theta(|t|^{k_1/c})$.
Since $\|z-\bar z\|=O(|t|^\alpha)$ and $\alpha>k_1/c$, we also have $\|z-z^\ast\|=\Theta(|t|^{k_1/c})$.
By Lemma~\ref{thm:BoundLinPred}, the ``ideal predictor'' $\hat {\bar z}={\bar z} - \tfrac c {k_1}  \dot {\bar z} t $
satisfies $\|\hat{\bar z} - z^\ast\|=O(\|\bar z-z^\ast\|^{k_2/k_1})=O(|t|^{k_2/c})$.

By the previous lemma, we can assume that the ratio $k_1/c$ produced in step 3 is the correct ratio for the Puiseux series $\bar z(\cdot)$.
Recall that our predictor is given by $\hat z=z - \tfrac c {k_1}  \dot z t$.
We then have $\hat {\bar z}={\bar z} - \tfrac c {k_1}  \dot {\bar z} t$, and so
$$
\|\hat z-\hat{\bar z}\|\le \|z-\bar z\| + \tfrac {c}{k_1}\|\dot z-\dot {\bar z}\|\,t\le |t|^\alpha + \tfrac {c}{k_1}|t|^{\alpha-2\delta}\,|t|
$$
$$
\|\hat z-z^\ast\|\le \|\hat{\bar z} - z^\ast\| + \|\hat z-\hat{\bar z}\| \le O(|t|^{k_2/c}) + O(|t|^\alpha)+O(|t|^{\alpha-2\delta+1}) =O(|t|^{k_2/c})
$$
since $\alpha>k_2/c$ and $\alpha-2\delta+1>k_2/c$ by the choice of $\alpha$ in eq.~\eqref{eq:alpha-choice}.
The RHS of the last expression is at most $O(\|z-z^\ast\|^{k_2/k_1})$.


\section{Corrector}\label{section:extendedsystem}
Let us now assume that we have initial point $x_\circ=(z_\circ,t_\circ)$
and predictor $\hat x=(\hat z,\hat t)$ where $\hat z$ approximates $z(\hat t)$ for some Puiseux series $z(\cdot)\in\Pi$.
The predictor step moved us away from the homotopy $h(\cdot)$;
the goal of the corrector is go back to this homotopy.

We will consider separately cases $\kappa=1$ and $\kappa\ge 2$. We will use $\hat t=0$ in the former case and $\hat t\ne 0$ in the latter.


\subsection{Corank $\kappa=1$: pseudo-arc length corrector}
Recall that in the classical approach we are effectively solving the system
\begin{equation}\label{eq:classical-extended}
    \begin{cases}
        h(x) &= 0\\
        t-\hat t &= 0
    \end{cases}
\end{equation}
over variables $x=(z,t)$.
Its Jacobian is
\begin{equation}
\begin{pmatrix}
h_z & h_t \\
0 & 1
\end{pmatrix}
\end{equation}
Note that if $\hat t=0$ then $x^\ast$ is a root of~\eqref{eq:classical-extended},
and the Jacobian is singular at this root (since the columns of $h_z$ are linearly dependent).
This fact prevents us from setting $\hat t=0$, since then the Newton's method may not converge.

We propose to do the following instead. Below $\beta$ is the parameter used in Theorem~\ref{th:linear-predictor}.
\begin{enumerate}
\item Set $q=\frac{[\dot z_\circ^\dag \; 1]}{\|[\dot z_\circ^\dag \; 1]\|}$.
Note that  $q^\dag$ is in the null space of $h_x(x_\circ)$, since 
$h_x(x_\circ)\cdot q^\dag=[h_z(x_\circ)\;\;h_t(x_\circ)]\cdot \begin{bmatrix}-h^{-1}_z(x_\circ)h_t(x_\circ)  \\ 1 \end{bmatrix}
=-h_t(x_\circ)+h_t(x_\circ)=0$.
\item Replace system~\eqref{eq:classical-extended} with
\begin{equation}\label{eq:gap1-extended}
h[\hat x](x)\eqdef    \begin{cases}
        h(x) &\\
         q \cdot (x-\hat x) &
    \end{cases}=0
\end{equation}
\item Apply Newton's method to solve equation $h[\hat x](x)=0$ starting with $x_0=\hat x$, generating 
a sequence of points $x_1,x_2,\ldots$.
Stop once we get a point $x=x_K=(z,t)$ with $\|h(x)\|\le|t|^\beta$.
\end{enumerate}

\begin{lemma}\label{lemma:q-choice}
There exists a neighborhood $\Omega$ of $x^\ast$ and constant $\lambda>0$
such that $\|q\|=1$ and $\|(D_x h[\hat x](x))^{-1}\| \le\lambda$ for any $x_\circ,x\in\Omega$.
\end{lemma}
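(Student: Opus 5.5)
The plan is to show that the $(n+1)\times(n+1)$ matrix
\[
M(x_\circ,x)=D_x h[\hat x](x)=\begin{pmatrix} h_z(x) & h_t(x)\\ q \end{pmatrix}
\]
converges to a fixed nonsingular matrix $M^\ast$ as $x_\circ,x\to x^\ast$. Continuity of inversion then gives a uniform bound $\lambda$ on $\|M^{-1}\|$ in a small neighborhood. The claim $\|q\|=1$ holds by construction. One caveat: $x_\circ$ must lie on, or close to, the curve, and $t_\circ\neq 0$, so that $\dot z_\circ$ is defined. I will take $\Omega$ to consist of points with $\|h(x_\circ)\|\le|t_\circ|^\beta$, as in the setting of Theorem~\ref{th:gap1:main}. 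By Lemma~\ref{L_0}, $\dot z_\circ$ is then within $|t_\circ|^{\alpha-2\delta}$ of $\dot{\bar z}(t_\circ)$ for some $\bar z(\cdot)\in\Pi$.

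First I identify the limit of $q$. Since $\kappa=1$, Lemma~\ref{lemma:hz-bound} and the SVD decomposition~\eqref{eq:GLAKHDGA} give
\[
\dot{\bar z}=-\sigma_n^{-1}v_n u_n^\dag h_t+O(1),
\]
with $\sigma_n\to 0$ and $|u_n^\dag h_t|=\Theta(1)$. Hence $\|\dot{\bar z}\|\to\infty$, and $\dot{\bar z}/\|\dot{\bar z}\|$ is asymptotically $e^{i\theta}v_n$ for some phase $\theta$. By Wedin's theorem, $v_n\to v_n^\ast$ up to phase, where $v_n^\ast$ spans $\ker J^\ast$. The perturbation $\dot z_\circ-\dot{\bar z}$ is $O(|t_\circ|^{\alpha-2\delta})$, which is negligible relative to $\|\dot{\bar z}\|$. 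So $q$ lies within $o(1)$ of the set $\{(e^{i\theta}(v_n^\ast)^\dag,0)\}$: the last coordinate $1/\|[\dot z_\circ^\dag\;1]\|$ tends to $0$. The phase does not matter, since multiplying a row by a unit scalar does not change the norm of the inverse. It is therefore enough to bound the inverse uniformly over the compact family $M_\theta^\ast=\begin{pmatrix} J^\ast & h_t^\ast\\ e^{i\theta}(v_n^\ast)^\dag & 0\end{pmatrix}$, or equivalently to check that $M_0^\ast$ is nonsingular.

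Nonsingularity of $M_0^\ast$ follows from Assumption~\ref{assumption:main}(d). Suppose $M_0^\ast(w,s)=0$. Then $J^\ast w+s h_t^\ast=0$, and since $h_t^\ast\notin\mathrm{range}(J^\ast)$ this forces $s=0$ and $J^\ast w=0$. So $w=\mu v_n^\ast$, and the last row gives $(v_n^\ast)^\dag w=\mu=0$. Hence the kernel is trivial.

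It remains to assemble the pieces. The entries $h_z(x),h_t(x)$ are continuous, so they converge to $J^\ast,h_t^\ast$ as $x\to x^\ast$. Together with the convergence of $q$ above, this gives $\|M(x_\circ,x)-e^{i\theta}$-rescaled $M_0^\ast\|\to 0$ uniformly. The Neumann-series bound used in~\eqref{eq:GALSKHFDALGAS} then gives $\|M^{-1}\|\le 2\|(M_0^\ast)^{-1}\|=:\lambda$ on a small enough $\Omega$.

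The main obstacle is the first step, namely the rigorous statement that the direction of $\dot z_\circ$ converges to $\ker J^\ast$ uniformly in $x_\circ$. This requires combining the SVD continuity argument of Lemma~\ref{lemma:hz-bound} with the off-curve approximation of Lemma~\ref{L_0}, while handling the phase ambiguity of $v_n$. To deal with the phase, I would work with the projector $v_nv_n^\dag$, which is phase-free, rather than with $v_n$ itself.
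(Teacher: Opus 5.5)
Your argument is correct within the scope you impose. However, it finds the limit of $q$ by a different and heavier route than the paper, and that scope is narrower than what the lemma states.

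The paper never looks at the curve. It uses the identity from step~1 of the corrector, $h_x(x_\circ)q^\dagger=0$, which holds for \emph{every} $x_\circ$ with $h_z(x_\circ)$ invertible, on or off the homotopy. Since $\kappa=1$ and Assumption~\ref{assumption:main}(d) give ${\tt rank}[J^\ast\;h_t^\ast]=n$, the kernel of the $n\times(n+1)$ matrix $h_x(x_\circ)$ is one-dimensional near $x^\ast$. It varies continuously with $x_\circ$: the paper cites Wedin, and the vector of signed $n\times n$ minors, which is polynomial in the entries, also works. Hence $q$ tends, up to a unit phase, to $q^\ast=((v_n^\ast)^\dagger,0)$. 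Nonsingularity of the bordered matrix at $x^\ast$ is then exactly your kernel computation for $M_0^\ast$.

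Seen this way, the step you call the main obstacle disappears. The last coordinate of $q$ vanishes in the limit simply because $h_t^\ast\notin\mathrm{range}(J^\ast)$ forces $s=0$ in any kernel vector of $h_x^\ast$. You need neither the blow-up of $\|\dot{\bar z}\|$ from \eqref{eq:GLAKHDGA} nor Lemma~\ref{L_0}.

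The price of your route is scope. You obtain the bound only for $x_\circ$ with $\|h(x_\circ)\|\le|t_\circ|^\beta$, and only for $\beta$ large enough for Lemma~\ref{L_0}. That set is not a neighborhood of $x^\ast$, whereas the lemma and the paper's proof cover all $x_\circ$ in a neighborhood. Your stated reason for the restriction is also inaccurate: defining $\dot z_\circ$ needs only $h_z(x_\circ)$ invertible, not $t_\circ\neq 0$ or proximity to the curve. Where $h_z(x_\circ)$ is singular, for example at $x_\circ=x^\ast$, the paper implicitly reads $q^\dagger$ as the unit kernel vector of $h_x(x_\circ)$.

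Your restricted version does suffice for Theorem~\ref{th:gap1:main}, where $x_\circ$ is the input point satisfying that inequality. Your explicit handling of the phase (a unitary diagonal left factor leaves $\|M^{-1}\|$ unchanged) and the Neumann-series bound $\|M^{-1}\|\le 2\|(M_0^\ast)^{-1}\|$ fill in details that the paper's ``the claim follows'' leaves implicit.
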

\begin{proof}
The Jacobian of $h[\hat x](\cdot)$ is given by
\begin{equation}
D_x h[\hat x]=
\begin{pmatrix}
h_z \;\; h_t \\
q
\end{pmatrix}
=
\begin{pmatrix}
h_z(x) \;\; h_t(x) \\
q(x_\circ)
\end{pmatrix}
\end{equation}
If $\hat x=x_\circ=x^\ast$ then matrix $D_x h[x^\ast]=
\begin{pmatrix}
h^\ast_z \;\; h^\ast_t \\
q^\ast
\end{pmatrix}
$ is non-singular by Assumption~\ref{assumption:main}, and matrix $[h^\ast_z \;\; h^\ast_t]$ has full rank.
Singular vectors of a matrix corresponding to singular values of multiplicity~1 depend continuously
on the matrix (by the Wedin's theorem which we used in the proof of Lemma~\ref{lemma:hz-bound}).
Therefore, matrix $\begin{pmatrix}
h_z \;\; h_t \\
q
\end{pmatrix}$ depends continuously on $(\hat x,x_\circ)$ (since $q$ is the singular vector of $h_x=[h_z\;h_t]$ corresponding to singular value $0$ of multiplicity 1).
The claim follows.

\end{proof} 

Note that the guarantee of Lemma~\ref{lemma:q-choice} can be achieved by many other choices of $q$, e.g.\ if $q$ is chosen randomly.
For the result below we assume that vector $q$ is chosen to satisfy the properties in Lemma~\ref{lemma:q-choice}
but is not necessarily in the null space of $h_x(x_\circ)$.

\begin{theorem}\label{th:gap1-corrector}
Let $\Omega$ be the neighborhood of $x^\ast$ from Lemma~\ref{lemma:q-choice} (with constant $\lambda\ge 0$).
There exist neighborhoods $\Omega^-\subseteq\Omega^+\subseteq\Omega$ of $x^\ast$ and constant $\beta_{\min}>0$ with the following property:
if $\hat x\in\Omega^-$ then equation $h[\hat x](x)=0$ has a unique solution $\bar x=(\bar z,\bar t)\in\Omega^+$,
and it satisfies $\|\bar x-x^\ast\|\le \lambda\|\hat x-x^\ast\|$.
Furthermore,
the sequence of points $x_0=\hat x,x_1,x_2,\ldots$ generated by the Newton's method satisfies
the following: \\
\textup{ (a)}
$\|x_k-x^\ast\|\le O(\|\hat x-x^\ast\|)$ for each $k\ge 1$.
\\
\textup{ (b)}
If $\beta>\beta_{\min}$, $k\ge \ell\eqdef\lceil 2\log_2(1+\beta)\rceil$, $x_k=x=(z,t)$ and $\|h(x)\|>|t|^\beta$
then $\|x-x^\ast\|\le \|\hat x-x^\ast\|^{O(2^{k-\ell})}$.

\end{theorem}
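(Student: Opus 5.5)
The plan is to apply the Kantorovich theorem (Theorem~\ref{th:NK}) to $F=h[\hat x]$ with initial point $x_0=\hat x$. The two basic facts are that $F(\hat x)=(h(\hat x),0)$ and $h(x^\ast)=0$. Since $h$ is analytic, there is a constant $M$ with $\|h(x)-h(y)\|\le M\|x-y\|$ and $\|D_xF(x)-D_xF(y)\|\le M\|x-y\|$ on $\Omega$; the last row of $D_xF$ is the constant $q$, so the Lipschitz bound for $D_xF$ comes only from $h_x$. Together with $\|(D_xF)^{-1}\|\le\lambda$ from Lemma~\ref{lemma:q-choice}, the preconditions of Theorem~\ref{th:NK} hold with $L=\lambda M$ and $C=\lambda M\|\hat x-x^\ast\|=:\lambda M d$. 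We choose $\Omega^-$ small enough that $2CL<1$, $r_-=O(d)$, and the ball $B(\hat x,r)$ with $r$ a fixed constant of order $1/L$ (any $r\in[r_-,r_+]$) lies in $\Omega$. Kantorovich then gives a unique root $\bar x$ in this ball, and $\Omega^+$ can be taken as a fixed ball around $x^\ast$ inside all these balls, shrinking $\Omega^-$ further as needed. The theorem also confines all iterates to $B(\hat x,r_-)$, so $\|x_k-x^\ast\|\le d+r_-=O(d)$, which is part (a). For the bound on $\bar x$, note that $F(x^\ast)-F(\bar x)=(0,q(x^\ast-\hat x))$, whose norm is at most $d$. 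Linearizing around $\bar x$ and using the Jacobian bound gives $\|\bar x-x^\ast\|\le\lambda d\,(1+O(d))$. To obtain exactly $\lambda$, I would run the argument with a slightly smaller inverse bound $\lambda'<\lambda$, obtained by shrinking $\Omega$, which absorbs the $(1+O(d))$ factor.

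For part (b), the quadratic error bound in Theorem~\ref{th:NK} gives $\|x_k-\bar x\|\le (Kd)^{2^k}$ for some constant $K$, with $Kd<1$ on $\Omega^-$. The stopping condition is what provides superlinearity: $\bar x$ itself is only $O(d)$-close to $x^\ast$, but it lies on $\calV$. Write $x=x_k=(z,t)$ and suppose $\|h(x)\|>|t|^\beta$. Since $h(\bar x)=0$, we get $|t|^\beta<\|h(x)\|\le M\|x-\bar x\|\le M(Kd)^{2^k}$, hence $|t|\le (K'd)^{2^k/\beta}$. Because $\bar x\in\Omega^+\subseteq\Omega_z\times\Omega_t$, Assumption~\ref{assumption:main}(e)(iii) gives $\bar z=\bar z(\bar t)$ for some series $\bar z(\cdot)\in\Pi$. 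As $\Pi$ is finite, $\|\bar z-z^\ast\|=O(|\bar t|^{\mu})$ with $\mu=\min k_1/c>0$. Moreover $|\bar t|\le|t|+(Kd)^{2^k}$. Combining these,
$$\|x-x^\ast\|\le\|x-\bar x\|+\|\bar x-x^\ast\|\le (Kd)^{2^k}+O\bigl((K'd)^{\mu 2^k/\beta}\bigr).$$
With $k\ge\ell=\lceil2\log_2(1+\beta)\rceil$ we have $2^k/\beta\ge 2^{k-\ell}(1+\beta)^2/\beta\ge 2^{k-\ell}$. The right-hand side is therefore $d^{\Theta(2^{k-\ell})}$ once the constants $K,K'$ are absorbed by requiring $d$ small, and $\beta_{\min}$ can be chosen so that the case $\beta>\beta_{\min}$ makes these exponents comfortably positive.

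I expect the main obstacle to be the last step, the conversion of a small residual into a small distance to $x^\ast$. It needs uniform control of $\|\bar z(\bar t)-z^\ast\|$ by a power of $|\bar t|$ across all branches in $\Pi$, and careful bookkeeping so that the constants $K,K',\mu$ do not destroy the form $d^{O(2^{k-\ell})}$, which is presumably where $\beta_{\min}$ and the factor $2$ in $\ell$ come from. A secondary technical point is obtaining exactly the constant $\lambda$ in $\|\bar x-x^\ast\|\le\lambda\|\hat x-x^\ast\|$, which I would handle by the margin argument described above.
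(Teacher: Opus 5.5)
Your skeleton is the same as the paper's. Kantorovich applied to $h[\hat x]$ at $x_0=\hat x$ gives existence, local uniqueness and part (a). Part (b) combines quadratic convergence to $\bar x\in\calV$, the Lipschitz bound on $h$, and the Puiseux estimate $\|\bar z-z^\ast\|=O(|\bar t|^{k_1/c})$, which is uniform because $\Pi$ is finite. Two steps differ from the paper, and one of them does not work as written.

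\textbf{The margin argument for the sharp constant can fail.} Shrinking $\Omega$ can only lower the supremum of $\|(D_xh[\hat x](x))^{-1}\|$ down to its value at $x_\circ=x=x^\ast$. Nothing in Lemma~\ref{lemma:q-choice} guarantees that the given $\lambda$ is strictly larger than that value. If it is not, no $\lambda'<\lambda$ exists, and linearizing at $\bar x$ only gives $\lambda d(1+O(d))$, where $d=\|\hat x-x^\ast\|$.

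The paper avoids the second-order remainder by bounding the derivative of the solution map $\varphi:\hat x\mapsto\bar x$. This map is well defined on $\Omega^-$ by the Kantorovich uniqueness, and it is $C^1$ by the implicit function theorem. Differentiating $h[u](\varphi(u))=0$ in $u$ gives $D\varphi(u)=(D_xh[u](\varphi(u)))^{-1}\begin{pmatrix}0\\ q\end{pmatrix}$, so $\|D\varphi(u)\|\le\lambda\|q\|=\lambda$ at every point. Since $\varphi(x^\ast)=x^\ast$ and $\Omega^-$ is a ball, the mean value inequality along $[x^\ast,\hat x]$ gives exactly $\|\bar x-x^\ast\|\le\lambda d$.

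\textbf{Your part (b) is correct.} It runs the paper's argument in the contrapositive direction. The paper proceeds as follows:
\begin{itemize}
\item it assumes $\|x_k-x^\ast\|>2d^{\alpha 2^{k-\ell}}$;
\item it uses the Puiseux bound to get the lower bound $|\bar t|^{\delta}>d^{\alpha 2^{k-\ell}}$, and hence $\|x_k-\bar x\|\le|\bar t|^{\delta 2^\ell}$;
\item it needs $\beta>\beta_{\min}$ (via $2^\ell\ge\beta_{\min}\beta$) to turn this into $\|h(x_k)\|\le O(|t|^{\beta+\varepsilon})<|t|^\beta$, contradicting the failed stopping test.
\end{itemize}

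You instead read an upper bound on $|t|$ directly off the failed test, $|t|^\beta<\|h(x_k)\|\le M\|x_k-\bar x\|$, and only then apply the Puiseux bound. In your version the exponent $\mu$ enters only the final power of $d$ and is never compared with $\beta$. Since $2^\ell/\beta\ge(1+\beta)^2/\beta\ge 4$, your bound holds for every $\beta>0$. So $\beta_{\min}$ is not actually needed; your closing remark about choosing it is superfluous rather than wrong.

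One small slip: in bounding $\|\bar x-x^\ast\|$ you dropped the $|\bar t|$ coordinate. This is harmless because $\mu<1$ when $\kappa=1$ (Lemma~\ref{lemma:hz-bound}).
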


\begin{proof}
We will apply Theorem~\ref{th:NK} for function $F(x)=h[\hat x](x)$. By assumption, we have $\|F^{-1}_x(x)\|\le \lambda$ for all $x\in\Omega$.
For any $x,y\in\Omega$ we have 
$$
\|F'(x)-F'(y)\|=\left\|
\begin{pmatrix}
	h_x(x) \;-\; h_x(y) \\
0
\end{pmatrix}
\right\|
=O(\|x-y\|)
$$
since $h_x=[h_z \; h_t]$ is analytic in $\Omega$.
Thus, the first two preconditions of Theorem~\ref{th:NK} hold if $L\ge L_0$ for some constant $L_0>0$.
%
We have $\|F(\hat x)\|=\left\|\begin{pmatrix} h(\hat x) \\ 0 \end{pmatrix}\right\|=\|h(\hat x)\|=\|h(\hat x)-h(x^\ast)\|\le C_0\cdot \|\hat x-x^\ast\|$
for some constant $C_0$, when $\hat x\in\Omega$ (since $h$ is analytic on $\Omega$).
Thus, the third precondition holds with any $C\ge \lambda C_0$.
Let us choose value $L\ge \max\{L_0,\tfrac 94 \lambda C_0\}$ so that $B(x^\ast,\tfrac{4}{3L})\subseteq \Omega$,
and set $C=4/(9L)$. These values satisfy conditions of the theorem, and hence 
equation $F(x)=0$ has a unique solution $\bar x\in B(\hat x,r)$ for any $r\in[r_-,r_+]=[\tfrac {2}{3L},\tfrac {4}{3L}]$.
Let us denote it as $\varphi(\hat x)$. Define $\Omega^-=B(x^\ast,r^-)$ and $\Omega^+=B(x^\ast,r^+)$,
then for each $\hat x\in\Omega^-$ we have $\|\varphi(\hat x)-x^\ast\|\le\|\varphi(\hat x)-\hat x\|+\|\hat x-x^\ast\|\le r_-+r_-=r_+$ 
and hence $\varphi(\hat x)\in\Omega^+$.

Function $\varphi(\cdot)$ must be continuous at each $\hat x\in\Omega^-$. 
Indeed, if $x$ is an accumulation point of $\varphi(u)$ as $u\rightarrow \hat x$ then $h[\hat x](x)=0$ by continuity,
and thus is uniquely determined by $\hat x$ since $h[\hat x](x)=0$ has a unique solution in $B(\hat x,r^+)$.
The uniqueness of the accumulation point implies the claim.

Differentiating the equation $h[\hat x](\varphi(\hat x))=0$ with respect to $\hat x$ gives
\begin{align*}
  D_{\hat x}\varphi(\hat x) 
  &= -(D_x h[\hat x](x))^{-1} D_{\hat x} h[\hat x](x) \\
  &= -(D_x h[\hat x](x))^{-1} \begin{pmatrix} 0 \\ -q \end{pmatrix},
\end{align*}
and hence
\begin{equation*}
  \|D_{u}\varphi(u)\| \le \|(D_x h[\hat x](x))^{-1}\| \cdot \left\| \begin{pmatrix} 0 \\ -q \end{pmatrix} \right\| \le \lambda \cdot 1.
\end{equation*}
This implies that $\|\bar x-x^\ast\|=\|\varphi(\hat x)-\varphi(x^\ast)\| \le \lambda \cdot \|\hat x-x^\ast\|$.

Next, we show properties \textup{(a)}-\textup{(b)} of the sequence $\{x_k\}_{k=0,1,\ldots}$ generated by the Newton's method.
We will denote $\Delta = \|\hat x-x^\ast\|$.
Theorem~\ref{th:NK} gives $\|x_{k+1}-\bar x\|\le \tfrac {3L}2 \|x_k-\bar x\|^2$,
with $\|x_0-\bar x\|\le \lambda \Delta$. By shrinking $\Omega^-$, if necessary, we can make sure
for some constant $\alpha>0$ we have
$$
\|x_{k}-\bar x\|\le  \min\{\;\Delta^{\alpha 2^{k}}\;,\;\|\hat x-\bar x\|\;\}\qquad\quad\forall k\ge 1
$$

\myparagraph{Property (a)} For any $k\ge 1$
we can write $\|x_k-x^\ast\|\le \|x_k-\bar x\|+\|\hat x-\bar x\|\le 2\lambda\|\hat x-x^\ast\|$.

\myparagraph{Property (b)}
Suppose that $\|x_k-x^\ast\|>2\Delta^{\alpha 2^{k-\ell}}$ for $k\ge \ell$. 
Then $\|\bar x-x^\ast\|\ge \|x_k-x^\ast\|-\|x_k-\bar x\|>
2\Delta^{\alpha 2^{k-\ell}} - \Delta^{\alpha 2^k}\ge \Delta^{\alpha 2^{k-\ell}}$.
We have $h(\bar x)=0$, and hence $\bar z=\bar z(\bar t)$ for some Puiseux series $\bar z(\cdot)\in\Pi$.
This implies that $\|\bar z-z^\ast\|\le O(|\bar t|^{k_1/c})$ where $k_1,c$ are the coefficients for $\bar z(\cdot)$.
We can thus write
$$
\Delta^{\alpha 2^{k-\ell}} \le \|\bar x-x^\ast\| \le \|\bar z-z^\ast\|+|\bar t-0|<O(|\bar t|^{k_1/c})+|\bar t|<|\bar t|^\delta
$$
for some constant $\delta>0$. This implies that $\|x_k-\bar x\|\le \Delta^{\alpha 2^k}\le |\bar t|^{\delta 2^\ell}$.
Since $\ell=\lceil 2\log_2(1+\beta)\rceil\ge \log_2\beta_{\min}+\log_2 \beta$,
we have 
$\|x_k-\bar x\|\le |t|^{\delta \beta_{\min} \beta}$. By choosing $\beta_{\min}$ sufficiently large, we can ensure
that $\|x_k-\bar x\|\le |\tfrac 12 \bar t|^{\beta+\varepsilon}$ for some constant $\varepsilon>0$ and $\|x_k-\bar x\|\le |\tfrac 12 \bar t|$. The latter condition
implies that point $x_k=x=(z,t)$ satisfies $|t-\bar t|\le |\tfrac 12 \bar t|$ and hence $|t|\ge |\tfrac 12 \bar t|$.
This yields $\|x_k-\bar x\|\le |\tfrac 12 \bar t|^{\beta+\varepsilon}\le |t|^{\beta+\varepsilon}$.
It remains to observe that $\|h(x_k)\|=\|h(x_k)-h(\bar x)\|\le O(\|x_k-\bar x\|)\le O(|t|^{\beta+\varepsilon})<|t|^\beta$ if neighborhood $\Omega$ is sufficiently small.

\end{proof}

We can finally prove Theorem~\ref{th:gap1:main}.
We use the following algorithm. Given point $x_\circ$, we compute predictor $\hat x$ as described in Section~\ref{sec:linear-predictor},
then construct system~\eqref{eq:gap1-extended} and run Newton's method, obtaining sequence
$x_0=\hat x,x_1,x_2,\ldots$. We stop once we get a point $x_k=(z,t)$ with $\|h(x)\|\le |t|^\beta$.
If $k<\ell=\lceil 2\log_2(1+\beta)\rceil$ then we return $x_k$,
otherwise we return the sequence $(x_\ell,x_{\ell+1},\ldots,x_k)$.
By combining Theorems~\ref{th:linear-predictor} and~\ref{th:gap1-corrector} we conclude that this algorithm
has the properties stated in Theorem~\ref{th:gap1:main}.

\myparagraph{Connection to the pseudo-arclength method} The corrector described above can be related
to the pseudo-arclength method \parencite{wempner1971discrete, riks1972application, keller1977numerical}.
The latter constructs the following system over variables $x=(z,t)$:
\begin{equation}\label{eq:arc-length}
    \begin{cases}
        h(x) &\\
         \frac {dz}{ds} (z-z_\circ) + \frac{dt}{ds}(t-t_\circ)-\Delta s &
    \end{cases}=0
\end{equation}
where parameter $s$ represents the Euclidean length along the curve $(z(t),t)$,
and quantities $\frac {dz}{ds}$, $\frac{dt}{ds}$, $\Delta s$ are fixed.
The first two quantities are computed from equations $\frac{dz}{ds}=\dot z \cdot \frac{dt}{ds}$ and $\left\|\frac {dz}{ds}\right\|^2 + \left|\frac {dt}{ds}\right|^2=1$.
Thus, the last equation in~\eqref{eq:arc-length} can also be equivalently written as $q\cdot (x-\hat x)=0$ for some $\hat x\in\mathbb C^{n+1}$,
where $q=[\dot z_\circ\; 1]$, as in~\eqref{eq:gap1-extended}.
The difference is that we set $\hat x$ explicitly via an endgame-aware linear predictor,
while pseudo-arclength methods control parameter $\Delta s$ instead.

\subsection{Corank $\kappa\ge 2$: lifted pseudo-arc length corrector}\label{section:section_LAL}
In this case we will introduce $\kappa-1$ new auxiliary variables $\xi\in\mathbb C^{\kappa-1}$.
Let us denote $y=(x,\xi)=(z,t,\xi)$, $\hat y=(\hat x,0)=(\hat z,\hat t,0)$.
We will solve the system
\begin{equation}\label{eq:gapk-extended}
h[\hat x](y)\eqdef    \begin{cases}
        h(x) + P \cdot \xi &\\
        Q \cdot (y-\hat y) &
    \end{cases}=0
\end{equation}
where $P\in\mathbb C^{n\times (\kappa-1)}$, $Q\in\mathbb C^{\kappa\times (n+\kappa)}$ are matrices computed from $(\hat x,x_\circ)$.
The Jacobian of $h[\hat x]$ is
\begin{equation}
D_y h[\hat x]=
\begin{pmatrix}
h_z \;\; h_t \;\; P\\
Q
\end{pmatrix}
\end{equation}

By assumption, we have ${\tt rank}([h^\ast_z \;\; h^\ast_t])=n-\kappa+1$.
This means that we can find matrices $P,Q$ such that
$
\|(D_y h[\hat x](y))^{-1}\|\le O(1)
$ assuming that $(\hat x,y)$ lies in a certain neighborhood of $(x^\ast,y^\ast)$.
Using the same arguments as in the previous section, one can then show
that system~\eqref{eq:gapk-extended} has a unique solution $\bar y$ in a neighborhood of $y^\ast$,
this solution satisfies $\|\bar y-y^\ast\|\le O(\|\hat x-x^\ast\|)$,
and it can be efficiently computed with any desired accuracy using the Newton's method.
Unfortunately, this does not lead to any guarantees on the convergence rate,
so we leave this claim without proof.


We denote $y=(v,t,\xi)$ to be the output of this process.
Experimentally, we observed that usually $\|f(v)\|\ll \|f(z_\circ)\|$
(and also $\|f(v)\|\ll \|f(\hat z)\|$)
where $f(z)=h(z,0)$ is the system that we are trying to solve. 
In fact, very often we observed a superlinear convergence rate on $\|f(\cdot)\|$,
i.e.\ $\|f(v)\|\le \|f(z_\circ)\|^\alpha$ for some constant $\alpha>1$.
The challenge here is that the obtained point does not lie on the homotopy $h$, so we would not be able to continue further with this homotopy.

We propose to replace $h(z,t)$ with the new homotopy $h^{v}(z,t)$ defined as follows:
$$
h^v(z,t)=f(z)-t f(v)
$$
This is known as the ``Newton homotopy'' (see, e.g.,\cite{MSW1992b}).
Note that point $(v, 1)$ lies on this homotopy. Furthermore, by tracking this homotopy
starting from this point
we can expect to arrive at $z^\ast$.

\begin{lemma}
Call $v$ {\em good} if there exists a continuous curve $\varphi^v:(0,1]\rightarrow\mathbb C^n$
with $\varphi^v(1)=v$ and $h^v(\varphi^v(t),t)=0$ for all $t\in(0,1]$. There exists a neighborhood $\Omega$ of $x^\ast$
such that for any good point $v\in\Omega$ there holds $\lim_{t\rightarrow 0} \varphi^v(t)=z^\ast$.
\end{lemma}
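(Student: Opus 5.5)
The plan is a topological trapping argument that uses only two things: $f(z)=h(z,0)$, as in Assumption~\ref{assumption:main}(a), and the fact that $z^\ast$ is an isolated root of $f$, as in Assumption~\ref{assumption:main}(b). The key identity comes straight from the definition of $h^v$. Along any curve with $h^v(\varphi^v(t),t)=0$ we have
$$f(\varphi^v(t))=h(\varphi^v(t),0)=t\,f(v),\qquad\text{so}\qquad \|f(\varphi^v(t))\|=|t|\,\|f(v)\|\le\|f(v)\|\quad\text{for } t\in(0,1].$$
Hence the whole curve lives in a sublevel set of $\|f\|$ that is as small as $\|f(v)\|$, and $\|f\|$ tends to $0$ along it as $t\to0$. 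No information about the original homotopy curve $z(\cdot)\in\Pi$ is needed. The only role of $h$ is that its restriction to $t=0$ is the system $f$ whose isolated root we target.

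\textbf{Step 1: choose the neighborhood.} By isolatedness, fix $r>0$ such that $z^\ast$ is the only zero of $f$ in the closed ball $\bar B(z^\ast,r)$. The sphere $S=\{z:\|z-z^\ast\|=r\}$ is compact and $f$ is continuous and nonvanishing on it, so
$$m=\min_{z\in S}\|f(z)\|>0 .$$
By continuity of $f$ at $z^\ast$, where $f(z^\ast)=0$, pick $r'\in(0,r)$ with $\|f(z)\|<m$ for all $\|z-z^\ast\|<r'$. Set $\Omega=B(z^\ast,r')\times B(0,\varepsilon)$ for any $\varepsilon>0$; the lemma concerns $v$ with $(v,0)\in\Omega$, i.e.\ $\|v-z^\ast\|<r'$. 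Then $\|f(v)\|<m$.

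\textbf{Step 2: the curve stays in the ball.} Suppose $\varphi^v$ leaves $B(z^\ast,r)$. Since $\varphi^v$ is continuous on the connected set $(0,1]$ and $\varphi^v(1)=v$ lies inside the ball, there is some $t_0\in(0,1]$ with $\varphi^v(t_0)\in S$. Then $\|f(\varphi^v(t_0))\|=t_0\|f(v)\|<m$, contradicting the definition of $m$. Hence $\varphi^v((0,1])\subset B(z^\ast,r)$, so the curve is bounded.

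\textbf{Step 3: identify the limit.} Let $w$ be any accumulation point of $\varphi^v(t)$ as $t\to0$; one exists by compactness of $\bar B(z^\ast,r)$. By continuity of $f$, $f(w)=\lim t\,f(v)=0$, and $w\in\bar B(z^\ast,r)$, so $w=z^\ast$ by the choice of $r$. A bounded function whose only accumulation point as $t\to0$ is $z^\ast$ converges to $z^\ast$. This gives $\lim_{t\to0}\varphi^v(t)=z^\ast$.

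The main obstacle is Step 2: ensuring the path cannot escape toward another root of $f$ or off to infinity. The choice of $\|f(v)\|<m$ in Step 1, combined with the monotone bound $\|f(\varphi^v(t))\|\le\|f(v)\|$, is exactly what handles it. Goodness of $v$ is used only to guarantee that the continuous curve exists; any singularities of $f_z$ along the way are irrelevant to this argument.
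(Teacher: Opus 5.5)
Your proof is correct and follows essentially the same route as the paper. You trap the curve inside a ball around $z^\ast$ via $\|f(\varphi^v(t))\|=t\|f(v)\|\le\|f(v)\|<\min_{\partial B}\|f\|$, then use compactness and the isolatedness of $z^\ast$ to force every accumulation point as $t\to 0$ to equal $z^\ast$. The remaining differences are cosmetic: the paper takes $\Omega=\{z\in B:\|f(z)\|<\alpha/2\}$ instead of a smaller ball, and works directly with a neighborhood of $z^\ast$ instead of a product neighborhood of $x^\ast$.
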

\begin{proof}
Since $z^\ast$ is an isolated root of $f$, there exists closed ball $B$ around $z^\ast$
such that $z^\ast$ is the only solution of $f(z)=0$ over $z\in B$.
Since its boundary $\partial B$ is compact and $f$ is continuous, there exists $\alpha=\min_{z\in \partial B}\|f(z)\|>0$.
Define $\Omega=\{z\in B\::\:\|f(z)\|<\alpha/2\}$; clearly, this is a neighborhood of $z^\ast$.
We claim that $\varphi^v(t)\in B$ for any $v\in \Omega$ and $t\in(0,1]$.
Indeed, we have $\|f(v)\|<\alpha/2$ since $v\in\Omega$.
	Also, $f(\varphi^v(t))-tf(v)=0$ and hence $\|f(\varphi^v(t))\|=\|tf(v)\|\le \|f(v)\|<\alpha/2$ for any $t\in(0,1]$.
Suppose there exists $t\in(0,1]$ with $\varphi^v(t)\notin B$, then by continuity there exists $t'\in(t,1)$
	with $\varphi^v(t')\in\partial B$. But then $\|f(\varphi^v(t'))\|\ge \alpha$ and $\|f(\varphi^v(t'))\|<\alpha/2$ - a contradiction.

We showed that $\varphi^v((0,1])\subseteq B$.
Since $B$ is compact, curve $\varphi^v$ must have at least one accumulation point as $t\rightarrow 0$.
Let $\bar z\in B$ be such point. Continuity of functions $h^v$ and $\varphi^v$ implies that $f(\bar z)=h^v(\bar z,0)=0$.
Since $z^\ast$ is the only root of $f$ in $B$, we must have $\bar z=z^\ast$.
This means that curve $\varphi^v$ has exactly one accumulation point as $t\rightarrow 0$, and hence $\lim_{t\rightarrow 0}\varphi^v(t)=z^\ast$.
\end{proof}

Unfortunately, tracking homotopy $h^v$ from $t=1$ can be very difficult,
since we may not be in the endgame zone yet. We illustrate this phenomenon with the following example.

\begin{example}
Consider the system
\begin{equation}
f(z_1,z_2)\eqdef    \begin{cases}
        z_1-z_2-z_2^2 &\\
        z_1-z_2+z_2^2 &
    \end{cases}=0
\end{equation}
It has unique solution $z=(0,0)$ of corank $\kappa=1$.
Now fix $v\in\mathbb C^2$, and define homotopy $h^v(z,t)=f(z) - t f(v)$.
Solving equation $h^v(z,t)=0$ gives
$$
\begin{cases}
z_1=v_2\cdot t^{\scriptscriptstyle 1/2} + (v_1-v_2)\cdot t  \\
z_2=v_2\cdot t^{\scriptscriptstyle 1/2}
\end{cases}
$$
We can define the ``endgame zone'' as those values of $t$ for which term $v_2\cdot t^{\scriptscriptstyle 1/2}$ of the Puiseux series dominates term $(v_1-v_2)\cdot t$;
in that case the linear predictor that estimates only the first term would give a good approximation.
	Thus, the endgame regime is given by the condition $|t|^{1/2}\ll |\frac{v_2}{v_1-v_2}|$. 

Let us fix $r>0$, and consider two processes for generating $v$.
\begin{itemize}
\item sample $v\in\mathbb C^2$ uniformly at random subject to $\|v\|=r$. Then $\mathbb E[|\frac{v_2}{v_1-v_2}|]=\Theta(1)$,
and so value $t=1$ is \textbf{ not} in the endgame zone.
\item sample $\alpha\in \mathbb C^2$ uniformly at random subject to $\|\alpha\|=r$, obtain $v$ by solving $f(v)=\alpha$. 
Then $\frac{v_2}{v_1-v_2}=\frac{\sqrt{2(\alpha_2-\alpha_1)}}{\alpha_1+\alpha_2}$
and $\mathbb E[|\frac{v_2}{v_1-v_2}|]=\Theta(\frac{1}{\sqrt r})$.
Thus, $t=1$ will be in the endgame zone if $r$ is sufficiently small.
\end{itemize}

One might ask whether the ``$\gamma$-trick'' could help. This is a standard approach in \gls{NAG}
to ensure genericity of $h$~\cite[Chapter 7]{SommeseWampler}. The idea is to choose a random value $\gamma\in\mathbb C$ with $|\gamma|=1$
and then define homotopy
$$
h(z,t)=(1-t)f(z) - \gamma t (f(v)-f(z))
$$
Note that we still have $h(v,1)=0$ and $h(z,0)=f(z)$. Solving equation $h(z,t)=0$ gives
$$
z_2=v_2\cdot \sqrt{\frac{\gamma t}{1-t+\gamma t}} = v_2\sqrt\gamma\cdot \left( t^{\scriptscriptstyle 1/2} + \frac{1-\gamma}2 \cdot t^{\scriptscriptstyle 3/2} + \ldots  \right)
$$
If $\gamma\ne 1$ then the first two terms are of the same order when $t=1$, and so $t=1$ is \textbf{ not} in the endgame zone for any choice of $v$.

\end{example}

For systems with larger corank ($\kappa \ge 2$), the predictor step within the \gls{LAL} (Lifted ArcLength Endgame) method is constructed analogously to the $\kappa=1$ case. However, we have empirically observed that projecting the path directly to the target root (i.e., setting $\hat t=0$) often produces numerical instabilities in subsequent corrector iterations. Such a direct jump can degrade both the quality of the newly predicted point and the reliability of the ongoing fractional exponent estimations.

To mitigate this instability, we restrict the parameter jump in the predictor phase by introducing an adaptive shrinking-factor exponent, $\eta$, which governs the step-size multiplier $\rho$. Specifically, the predicted parameter value is defined as $\hat{t} = \rho \cdot t_\circ$, where the step size is explicitly formulated as $\rho = |t_\circ|^\eta$. 
To systematically control the progression toward the singularity, the exponent is updated iteratively via the rule $\eta \leftarrow \eta^\alpha$ for some prescribed constant $\alpha > 0$. We refer to Section~\ref{section:description_implementation}
for further details.


\section{Higher-order predictors}
\label{section:PredBounds}
In this section we will consider the problem of estimating coefficients of the Puiseux series
\begin{equation}\label{eq:Puiseux2}
z(t)=\sum_{j=0}^\infty a_j t^{j/c} = a_0 + a_{k_1} t^{k_1/c} + a_{k_2} t^{k_2/c} + \ldots
\end{equation}
using several input points $(t_1,z_1),(t_2,z_2),\ldots$.
In the analysis we will assume for simplicity that the points lie exactly on the curve, i.e.\ $z_i=z(t_i)$. 
Note that for each point $z_i$ we can also compute the derivative $\dot z_i$ via eq.~\eqref{eq:dot-z}.

A standard higher-order predictor used in the classical power-series endgame \parencite{SommeseWampler, BatesHauensteinSommeseWampler2013}
is the cubic predictor constructed via Hermite interpolation from points $z_1,\dot z_1,z_2,\dot z_2$.
Assuming that $z(t)= a_0 + a_{k_1} t^{k_1/c} + a_{k_2} t^{k_2/c} + a_{k_3} t^{k_3/c}$
and $t_2=\lambda t_1$,
we obtain the following system of equations:
\begin{equation} \label{eq:hermite_matrix}
\begin{bmatrix}
1 & 1 & 1 & 1 \\[0.5em]
0 & \frac{k_1}{c} & \frac{k_2}{c} & \frac{k_3}{c} \\[0.5em]
1 & \lambda^{k_1/c} & \lambda^{k_2/c} & \lambda^{k_3/c} \\[0.5em]
0 & \frac{k_1}{c}\lambda^{k_1/c} & \frac{k_2}{c}\lambda^{k_2/c} & \frac{k_3}{c}\lambda^{k_3/c}
\end{bmatrix}
\begin{bmatrix}
a_{0} \\[0.5em] a_{1}t_1^{k_1/c} \\[0.5em] a_{2}t_1^{k_2/c} \\[0.5em] a_{3}t_1^{k_3/c}
\end{bmatrix}
=
\begin{bmatrix}
z_1 \\[0.5em] t_1\dot{z}_1 \\[0.5em] z_2 \\[0.5em] t_2\dot{z}_2
\end{bmatrix}.
\end{equation}
One can then solve for $a_0,a_1,a_2,a_3$, assuming that the ratios $k_1/c$, $k_2/c$, $k_3/c$ are known.

To our knowledge, previous works only addressed the problem of estimating $k_1/c$.
As a result, the standard cubic predictor is applied assuming that $(k_1,k_2,k_3)=(1,2,3)$.
If the actual indices differ from $(1,2,3)$ then the predictor would give a poor approximation, 
and the method would fall back to a linear predictor that requires only the ratio $k_1/c$.
Note that the fractional exponents of a branch do not always form a dense sequence $1/c, 2/c, 3/c, \dots$; rather, they are restricted to a value semigroup $S \subseteq \mathbb{N}/c$. Gaps in this semigroup represent fractional powers that are topologically prohibited
from appearing in the expansion due to the degeneracy of the tangent cone \cite[Chapter 4]{wall2004singular};\cite{greuel2007introduction,zariski1932topology}. 

In Section~\ref{section:estimatesk1ck2c} we present a method for estimating higher-order ratios. 
This is achieved by extending the Geometric Sequence Sampling approach~\cite{bates2011parallel, SommeseWampler} originally proposed for estimating $k_1/c$.
In particular, we show how to estimate $k_2/c$ and $k_3/c$ using 5 points $z_1,\ldots,z_5$ and their derivatives.
This enables a cubic predictor for general Puiseux series. 

Recall that the Hermite interpolation uses 2 points to estimate coefficients $a_0,a_1,a_2,a_3$.
In our case we have 5 points available at no additional cost, so it is natural to ask whether these additional points can be used
to improve the estimation of coefficients.
In Section~\ref{section:coeff_predictors} we propose a new scheme for this. The new scheme requires inverting a matrix
whose condition number stays constant as $\lambda\rightarrow 1$. In contrast, the Hermite interpolation
involves inverting a matrix whose condition number grows as $\lambda\rightarrow 1$
(and thus one would need to use more digits during this computation).

In both parts we will use higher-order derivatives, in particular the second derivative $\ddot z$.
In principle, it could be computed by twice differentiating the homotopy equation $h(z(t),t)=0$.
However, this would be a very expensive operation. Instead, we approximate $\ddot z$ from first derivatives
using finite differences, as described later in Section~\ref{sec:second-derivative}.

\subsection{Estimating $k_i /c$}\label{section:estimatesk1ck2c}


Accurately estimating the ratios $k_i/c$ of the first leading terms is a critical phase of the singular endgame.
Below we discuss several methods for that separating cases $i=1$ and $i\ge 2$.

\subsubsection{Estimating $k_1/c$}
Below we describe three different methods: two existing ones and one new.
We refer to them as \cSORTcode, \cLOGcode\ and \cRATIOcode, respectively.


\myparagraph{Method 1: Trial-and-error (\cSORTcode)}
A standard, albeit heuristic, approach to estimate the winding number $c$ is to track the solution curve $z(t)$ from some initial $t_1 \in (0,1]$ down to a smaller value $t_2 < t_1$ using small steps $\Delta t$.
%
The cycle number is estimated by identifying the integer $c \in \{1, \dots, c_{\max}\}$ 
that minimizes the prediction residual $\|z_2 - \hat{z}_c(t_2)\|$, where $\hat{z}_c(t)=z_1 + c \dot z_1 t_1 \left(\left(\tfrac{t}{t_1}\right)^{1/c}-1\right)$
 is the linear predictor at $t_1$ parameterized by the candidate cycle number.


\myparagraph{Method 2: Geometric Sequence Sampling (\cLOGcode)}
An alternative method 
is to sample the path along a geometric sequence \parencite{bates2011parallel, SommeseWampler}.
The ratio $k_1/c$ is estimated using three tracked points $z_1, z_2, z_3 \in \mathbb{C}^n$ evaluated at geometrically decreasing parameter values, e.g., $z_{\ell} := z(\rho^{\ell} t)$ for some step ratio $0 < \rho < 1$. 

To extract the exponents, the method subtracts the values of two consecutive points in this sequence and projects them onto a randomly chosen generic vector $v \in \mathbb{C}^N$. Taking the dot product isolates a scalar sequence:
$$\langle v, z_{\ell} - z_{\ell+1} \rangle = \left\langle v , \sum_{j=k_1}^\infty a_{j} t^{j/c} (1-\rho^{j/c}) (\rho^{j \ell/c}) \right\rangle = \sum_{j=k_1}^\infty v_{j} \rho^{j \ell/c}$$
where $v_{j} := \langle v, a_{j} \rangle t^{j/c} (1-\rho^{j/c}) \in \mathbb{C}$. Let $\Delta_{\ell} := \langle v, z_{\ell} - z_{\ell+1} \rangle$. Taking the logarithmic difference isolates the leading fractional exponent:
\begin{align*}
  \log \left| \frac{\Delta_{\ell+1}}{\Delta_{\ell}} \right| 
  &= \log \left| \frac{\sum_{j=k_1}^\infty v_{j} \rho^{j(\ell+1)/c}}{\sum_{j=k_1}^\infty v_{j} \rho^{j\ell/c}} \right| \\
  &= \log \left| \rho^{k_1/c} \frac{v_{k_1} + \sum_{j>k_1} v_{j} \rho^{(j-k_1)(\ell+1)/c}}{v_{k_1} + \sum_{j>k_1} v_{j}\rho^{(j-k_1)\ell/c}} \right| \\[1ex]
  &\approx \frac{k_1}{c} \log \rho
\end{align*}
where the approximation becomes exact as $t \to 0$ (or as $\ell$ becomes large), forcing the higher-order terms to vanish.

\myparagraph{Method 3: A ratio method (\cRATIOcode)} 
The third method that we consider was already described earlier in Section~\ref{sec:linear-predictor}. 
Let $z_1,z_2$ be two tracked points at parameters $t_1,t_2$. By Lemma~\ref{thm:BoundLinPred}, we have
\begin{equation}\label{eq:cRATIO}
  \frac{\|t_2\dot z_2-t_1\dot z_1\|}{\|z_2-z_1\|} \approx \frac{k_1}c
\end{equation}
where the approximation becomes exact as $t_1 \to 0$ and $t_2\in[0,t_1)$. 

\subsubsection{Extension to $i\ge 2$}
Our extension is based on the following observation:
if function $z(t)$ is represented by the Puiseux series~\eqref{eq:Puiseux2} with $k_1/c$ as the leading fractional exponent then function
\begin{align}\label{eq:extension-k2}
z^{(1)}(t):=z(t)-\frac{c}{k_1} t\dot z(t) 
\end{align}
is represented by a Puiseux series with $k_2/c$ as the leading fractional exponent:
\begin{align}
z^{(1)}(t)=z^\ast+\sum_{j=2}^\infty \left(1-\frac{k_j}{k_1}\right)a_{k_j}t^{k_j/c} 
\end{align}
Thus, $k_2/c$ can be recovered by applying an existing method for estimating $k_1/c$ to the function~$z^{(1)}(t)$.

This idea can be applied recursively for estimating higher-order ratios.
Namely, we will estimate $k_{i+1}/c$ by applying an existing method for estimating $k_1/c$ to the function $z^{(i)}(t)$
where we recursively define  $z^{(i)}(t)=\left(z^{(i-1)}\right)^{(1)}(t)$ for $i\ge 2$.
In particular, for $i=2$ we have
\begin{align}\label{eq:extension-k3}
z^{(2)}(t)=z^{(1)}(t)-\frac{c}{k_2} t\dot z^{(1)}(t)
&=\left(z(t)-\frac{c}{k_1} t\dot z(t)\right) - \frac{c}{k_2} t \left(\dot z(t) - \frac{c}{k_1} \dot z(t) - \frac{c}{k_1} t\ddot z (t)\right) \nonumber
\\
&=z(t)+\left(-\frac{c}{k_1}-\frac{c}{k_2}+\frac{c^2}{k_1k_2}\right) t\dot z(t)+\frac{c^2}{k_1k_2} t^2\ddot z (t)
\end{align}

We will use the \cLOGcode\ method as the basic estimator. 
Recall that it requires three points $z^{(i)}(t_1)$, $z^{(i)}(t_2)$, $z^{(i)}(t_3)$
where $t_1=t$, $t_2=\rho t$, $t_3=\rho^2 t$.
To summarize, for estimating $k_2/c$ we compute these points via~\eqref{eq:extension-k2}
(assuming that $k_1/c$ is known), and for estimating $k_3/c$ we compute these points via~\eqref{eq:extension-k3}
(assuming that $k_1/c$ and $k_2/c$ are known). Note that in the latter case we need the second derivative $\ddot z(t)$;
as mentioned before, we approximate it via finite differences (see Section~\ref{sec:second-derivative}).

\begin{remark}
We also tested the \cRATIOcode\ method, but found it to perform worse than \cLOGcode. Note that estimating $k_2/c$ via \cRATIOcode\ requires
points $z^{(1)}(t_1),\dot z^{(1)}(t_1),z^{(1)}(t_2),\dot z^{(1)}(t_2)$. This in turn requires second derivatives $\ddot z_1,\ddot z_2$, which would need to be approximated via finite differences. In contrast, \cLOGcode\ only needs $z_1,\dot z_1,z_2,\dot z_2$.

\end{remark}

\subsection{Estimating coefficients}\label{section:coeff_predictors}
In this section we present a method for estimating coefficients of the Puiseux series
from two points $z_1=z(t_1)$ and $z_2=z(t_2)$ and their higher-order derivatives.
We will assume that ratios $k_i/c$ are known, and $t_2=\lambda t_1$ where $\lambda\in(0,1)$.



For each integer \(r \ge 0\) let us define the auxiliary series
\[
p_r(t) := \sum_{j=1}^{\infty} \left(\frac{j}{c}\right)^{r} a_j t^{j/c}.
\]
Note that $p_0(t)=z(t)-z^\ast$. The result below shows that $p_r(t)$ for $r\ge 1$ can be expressed via function $z(\cdot)$ and its higher-order derivatives.

\begin{lemma}\label{lem:puiseux_derivatives}
The following identity holds:
\begin{equation}\label{eq:StirlingFormula}
p_r(t)
= \sum_{j=1}^{r} S^r_j\, t^{j}\, \frac{\mathrm{d}^{j}}{\mathrm{d}t^{j}}z(t) ,\qquad r\ge 1    
\end{equation}
where \(S^n_k\) denotes the Stirling numbers of the second kind, defined by the recurrence
$
S^{n+1}_k = S^n_{k-1} + k\,S^n_k,
$
with initial conditions \(S^n_1 = 1\), \(S^n_0 = 0\) for \(n > 0\), and \(S^n_n = 1\) for \(n \ge 0\).
In particular,
$$
p_0(t)=z(t)-z^\ast\qquad\qquad
p_1(t)=t\dot z(t)\qquad\qquad
p_2(t)=t^2\ddot z(t) + t\dot z(t)
$$
\end{lemma}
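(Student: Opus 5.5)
The identity is linear in $z$, so I would prove it term by term. Both sides are linear in $z(\cdot)$, and the series converges together with all its termwise derivatives on a punctured neighborhood of $0$ (on a fixed branch of $t^{1/c}$, where $\tilde z(s)=z(s^c)$ is analytic and derivatives can be taken termwise). It therefore suffices to verify the identity for a single monomial $t^{\mu}$ with $\mu=j/c$. The constant term $a_0$ contributes nothing to either side for $r\ge 1$, since every term on the right carries at least one derivative.

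For a monomial, $t^m\frac{d^m}{dt^m}t^\mu=(\mu)_m\,t^\mu$, where $(\mu)_m=\mu(\mu-1)\cdots(\mu-m+1)$ is the falling factorial. The claim thus reduces to the polynomial identity
\[
\mu^r=\sum_{m=1}^{r}S^r_m\,(\mu)_m ,\qquad r\ge 1,
\]
which is the classical expansion of powers into falling factorials. The statement defines $S^n_k$ by the recurrence, so I would prove this identity from that recurrence by induction on $r$. The base case $r=1$ reads $\mu=S^1_1(\mu)_1$.

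For the induction step, multiply the identity for $r$ by $\mu=(\mu-m)+m$ and use $(\mu)_m(\mu-m)=(\mu)_{m+1}$. This gives
\[
\mu^{r+1}=\sum_m S^r_m(\mu)_{m+1}+\sum_m mS^r_m(\mu)_m .
\]
Reindexing the first sum and applying $S^{r+1}_m=S^r_{m-1}+mS^r_m$, together with the boundary conventions $S^r_0=0$ and $S^r_{r+1}=0$ (the latter implied by the recurrence and the initial conditions), gives the identity for $r+1$.

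Equivalently, one can avoid monomials and induct directly on operators. Writing $\theta=t\frac{d}{dt}$, we have $p_r=\theta^r(z-z^\ast)$, and the commutation $\theta\,(t^m D^m)=m\,t^mD^m+t^{m+1}D^{m+1}$ gives the same recurrence. I would present whichever is cleaner, probably the operator version, since it sidesteps convergence issues except for the termwise identity $\theta p_r=p_{r+1}$.

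The special cases then follow directly. We have $S^1_1=1$ and $S^2_1=S^2_2=1$, which give $p_1=t\dot z$ and $p_2=t^2\ddot z+t\dot z$, while $p_0=z-z^\ast$ holds by definition.

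The only point needing care is justifying termwise differentiation of the Puiseux series. This follows because $\tilde z$ is analytic and $t\frac{d}{dt}=\frac1c\,s\frac{d}{ds}$ under $t=s^c$. I expect this to be the sole (minor) obstacle.
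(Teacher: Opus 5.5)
Your proposal is correct. The operator version you say you would present is exactly the paper's proof: induction on $r$ using $p_{r+1}=t\frac{\mathrm{d}}{\mathrm{d}t}p_r$, the product rule $t\frac{\mathrm{d}}{\mathrm{d}t}\bigl(t^m z^{(m)}\bigr)=m\,t^m z^{(m)}+t^{m+1}z^{(m+1)}$, reindexing, and the Stirling recurrence with the same implicit boundary convention $S^r_{r+1}=0$. Your monomial variant via $\mu^r=\sum_m S^r_m(\mu)_m$ is the same recurrence applied to each term $t^{\mu}$. Your remark on termwise differentiation via $t\frac{\mathrm{d}}{\mathrm{d}t}=\frac1c\,s\frac{\mathrm{d}}{\mathrm{d}s}$ fills in a step the paper leaves implicit.
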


\begin{proof}
The formula is proved by induction on $r$. The base case $r=1$ follows directly from the definitions.

For the inductive step, we first observe the relation $p_{r+1}(t) = t \frac{\mathrm{d}}{\mathrm{d}t} p_r(t)$ for $r \ge 1$, which follows directly from the structure of the derivatives of the monomials $t^{j/c}$. Assuming Equation~\eqref{eq:StirlingFormula} holds for a given $r \ge 1$, we apply the product rule to obtain:
\begin{align*}
p_{r+1}(t) & = t \frac{\mathrm{d}}{\mathrm{d}t} p_r(t)  = \sum_{j=1}^{r} S^r_j t^{j+1} \frac{\mathrm{d}^{j+1}}{\mathrm{d}t^{j+1}}z(t) + \sum_{j=1}^{r} j S^r_j t^{j} \frac{\mathrm{d}^{j}}{\mathrm{d}t^{j}}z(t) \\
& = \sum_{j=1}^{r+1} \bigl(S^r_{j-1} + jS^r_j\bigr) t^{j} \frac{\mathrm{d}^{j}}{\mathrm{d}t^{j}}z(t) \\
& = \sum_{j=1}^{r+1} S^{r+1}_j t^{j} \frac{\mathrm{d}^{j}}{\mathrm{d}t^{j}}z(t).
\end{align*}
The final equality follows immediately from the recurrence relation for the Stirling numbers of the second kind.
\end{proof}

Next, we define quantities
\[
    P_r := p_{r-1}(t_1) - p_{r-1}(t_2) = \sum_{j=1}^{\infty} \left(\frac{j}{c}\right)^{r-1} a_j \bigl(t_1^{j/c} - t_2^{j/c}\bigr),
\]
and let $P$ be the column vector $P = (P_1,\dots,P_\ell)^{\top}$. 
We will show how to estimate coefficients $a_{k_1},\ldots,a_{k_\ell}$ from $P$.
We will need the following definition.

\begin{definition}[Scaled Vandermonde matrix]
\label{def:vandermonde}
Let \(k_1,\dots,k_\ell\) and \(c\) be scalars with \(c \neq 0\). The \(\ell \times \ell\) Vandermonde matrix
\[
V\bigl(k_{1}/c,\dots,k_{\ell}/c\bigr)
:= \bigl(V_{ij}\bigr)_{1 \le i,j \le \ell}, 
\quad
V_{ij} := \left(\frac{k_j}{c}\right)^{i-1},
\]
is called the \emph{scaled Vandermonde matrix} associated with the nodes \(k_1,\dots,k_\ell\).
\end{definition}

It is well known that if the nodes \(k_1,\dots,k_\ell\) are pairwise distinct, then \(V\) is nonsingular (see, e.g., ~\textcite[Chapter~22]{higham2002accuracy}). In this case, the entries of the inverse matrix \(V^{-1}\) admit the explicit representation
\[
(V^{-1})_{ij}
= \frac{(-1)^{\ell-j}c^{\ell-1}}{\displaystyle\prod_{\substack{m=1 \\ m \ne i}}^{\ell}\bigl(k_i-k_m\bigr)}
\sum_{\substack{S \subseteq \{1,\dots,\ell\}\setminus\{i\} \\ |S|=\ell-j}}
\prod_{s \in S} \frac{k_s}{c},
\quad
1 \le i,j \le \ell.
\]
This formula follows from the classical expression for the inverse of a Vandermonde matrix in terms of elementary symmetric polynomials.



\begin{proposition}
\label{prop:coeff_reconstruction}
For $i\in[\ell]$ define
\begin{equation}\label{eq:coeff_reconstruction}
\tilde{a}_{k_i}
=\frac{\left(V^{-1}P\right)_i}{t_1^{k_i/c}-t_2^{k_i/c}} 
\end{equation}
Then
\begin{equation}\label{eq:coeff_reconstruction:bound}
\bigl\|a_{k_i} - \tilde{a}_{k_i}\bigr\| = {O}\!\left( t_1^{(k_{\ell+1}-k_i)/c} \right)
\end{equation}
\end{proposition}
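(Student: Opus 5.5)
The plan is to write $P$ as an exact linear system in the unknowns $b_i := a_{k_i}(t_1^{k_i/c}-t_2^{k_i/c})$, $i\in[\ell]$, plus a tail, and then invert $V$.

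Only indices $j\in\{k_1,k_2,\ldots\}$ contribute, since all other $a_j$ vanish. By the definition of $P_r$,
\[
P_r=\sum_{i=1}^{\ell}\Bigl(\frac{k_i}{c}\Bigr)^{r-1} b_i + R_r,\qquad R_r:=\sum_{m>\ell}\Bigl(\frac{k_m}{c}\Bigr)^{r-1} a_{k_m}\bigl(t_1^{k_m/c}-t_2^{k_m/c}\bigr).
\]
In matrix form this reads $P = V b + R$, where $b=(b_1,\ldots,b_\ell)^\top$ (each entry a vector in $\mathbb C^n$; $V$ acts coordinatewise) and $V=V(k_1/c,\ldots,k_\ell/c)$ is the scaled Vandermonde matrix of Definition~\ref{def:vandermonde}. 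The nodes are distinct because $k_1<k_2<\cdots$, so $V$ is invertible and $V^{-1}P=b+V^{-1}R$. Dividing the $i$-th entry by $t_1^{k_i/c}-t_2^{k_i/c}$ gives
\[
\tilde a_{k_i}-a_{k_i}=\frac{(V^{-1}R)_i}{t_1^{k_i/c}-t_2^{k_i/c}}.
\]
It remains to bound the numerator and the denominator. $V^{-1}$ is a fixed matrix depending only on the $k_i/c$, so $\|(V^{-1}R)_i\|=O(\max_r\|R_r\|)$.

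For the denominator, substitute $t_2=\lambda t_1$ to get $t_1^{k_i/c}(1-\lambda^{k_i/c})$. With $\lambda\in(0,1)$ fixed, $1-\lambda^{k_i/c}$ is a nonzero constant, so $|t_1^{k_i/c}-t_2^{k_i/c}|=\Theta(|t_1|^{k_i/c})$.

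For the tail, write $R_r=t_1^{k_{\ell+1}/c}\sum_{m>\ell}(k_m/c)^{r-1}a_{k_m}(1-\lambda^{k_m/c})\,t_1^{(k_m-k_{\ell+1})/c}$. I expect this to be the only delicate step: showing the remaining sum is $O(1)$ uniformly for small $t_1$, despite the polynomial weights $(k_m/c)^{r-1}$. I would handle it via $s=t_1^{1/c}$. The series $\sum_j a_j s^j$ converges on some disc $|s|<\rho_0$, so $\|a_j\|\le C\rho^{-j}$ for any $\rho<\rho_0$. The weights $(j/c)^{r-1}$ with $r\le\ell$ fixed do not change the radius of convergence, and $|1-\lambda^{j/c}|\le 2$. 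Hence $\sum_{m>\ell}(k_m/c)^{r-1}\|a_{k_m}\|\,|s|^{k_m-k_{\ell+1}}$ is bounded for $|s|\le\rho/2$. (Equivalently, $p_{r-1}$ is $z$ differentiated termwise, which is analytic in $s$.)

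Combining the two bounds gives $\|R_r\|=O(|t_1|^{k_{\ell+1}/c})$, and therefore
\[
\|a_{k_i}-\tilde a_{k_i}\|=O\bigl(|t_1|^{k_{\ell+1}/c}/|t_1|^{k_i/c}\bigr)=O\bigl(t_1^{(k_{\ell+1}-k_i)/c}\bigr),
\]
which is \eqref{eq:coeff_reconstruction:bound}.
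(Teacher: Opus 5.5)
Your proof is correct and follows the same route as the paper. Both write $P=VQ+R$ with $Q_i=a_{k_i}(t_1^{k_i/c}-t_2^{k_i/c})$, apply the fixed matrix $V^{-1}$, and divide by $t_1^{k_i/c}-t_2^{k_i/c}=\Theta(t_1^{k_i/c})$. There are two minor differences. The paper keeps the factor $\varepsilon=1-\lambda$ explicitly in both the tail and the denominator, so its constant stays bounded as $\lambda\to 1$, whereas you treat $\lambda$ as fixed. You also justify the tail bound through analyticity in $s=t_1^{1/c}$, which the paper leaves implicit.
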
 

\begin{proof}
Recall that $t_2=\lambda t_1$ where $\lambda\in(0,1)$. Denote $\lambda=1-\varepsilon$,
then $t_1^d - t_2^d=t_1^d(1-(1-\varepsilon)^d)=\Theta(t_1^d \varepsilon)$ for any fixed $d>0$.

Let $Q$ be the column vector with components $Q_i=a_{k_i}(t_1^{k_i/c}-t_2^{k_i/c})$. We can write
\begin{align*}
(VQ)_i&=\sum_{j=1}^\ell \left(\frac{k_j}c\right)^{i-1} \cdot a_{k_j}(t_1^{k_j/c}-t_2^{k_j/c}) 
= P_i + O(t_1^{k_{\ell+1}/c}-t_2^{k_{\ell+1}/c})
= P_i + O(t_1^{k_{\ell+1}/c}\varepsilon)
\\
VQ &= P + O(t_1^{k_{\ell+1}/c}\varepsilon)
\end{align*}
Multiplying this by $V^{-1}$ on the left gives
\[
\left(V^{-1}P\right)_i
= a_{k_i} (t_1^{k_i/c}-t_2^{k_i/c}) + O(t_1^{k_{\ell+1}/c}\varepsilon)
\]
since the entries of $V^{-1}$ are constants determined entirely by the fixed values $k_1/c,\ldots,k_\ell/c$.
Dividing the last equation by $t_1^{k_i/c}-t_2^{k_i/c}=\Theta(t_1^{k_i/c}\varepsilon)$  gives
\[
a_{k_i}
= \frac{\left(V^{-1}P\right)_i}{ t_1^{k_i/c}-t_2^{k_i/c} }+ O(t_1^{(k_{\ell+1}-k_i)/c})
\]
\end{proof}

\myparagraph{Final predictor} Note that for any $t$ we have
\begin{equation}\label{eq:GNALKSDGAHLKGHA}
 z(t) = z(t_1) + \sum_{j=1}^{\infty} {a}_{k_j} \left( t^{k_j/c} - t_1^{k_j/c} \right) 
\end{equation}
Based on this observation, we define the $\ell$-th order predictor via
\begin{equation}\label{eq:GALSDGKAHSLGA}
 \hat z(t) = z_1 + \sum_{j=1}^{\ell} \tilde{a}_{k_j} \left( t^{k_j/c} - t_1^{k_j/c} \right) 
\end{equation}
where $\tilde{a}_1,\ldots,\tilde{a}_\ell$ are computed as in eq.~\eqref{eq:coeff_reconstruction}.

\begin{theorem} \label{thm:predictor_error}
For a prediction step $t = \rho t_1$ with $\rho \in (0, 1)$, the $\ell$-th order predictor satisfies 
$$ \|\hat{z}(t) - z(t)\| = {O}\left(t_1^{k_{\ell+1}/c}\right) = {O}\left(\|z_1 - z^*\|^{k_{\ell+1}/k_1}\right) $$
\end{theorem}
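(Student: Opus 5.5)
Subtracting the exact identity \eqref{eq:GNALKSDGAHLKGHA} (evaluated at $t=\rho t_1$) from the definition \eqref{eq:GALSDGKAHSLGA} of the predictor, and using $z_1=z(t_1)$, splits the error into two parts:
\begin{align*}
\hat z(t)-z(t) &= \sum_{j=1}^{\ell}\bigl(\tilde a_{k_j}-a_{k_j}\bigr)\bigl(t^{k_j/c}-t_1^{k_j/c}\bigr) \;-\; \sum_{j>\ell} a_{k_j}\bigl(t^{k_j/c}-t_1^{k_j/c}\bigr).
\end{align*}
I would bound the two sums separately and then convert the bound from $t_1$ to $\|z_1-z^\ast\|$.

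The tail sum is handled by convergence of the Puiseux series. Since $t=\rho t_1$ with $\rho\in(0,1)$ fixed, we have $|t^{k_j/c}-t_1^{k_j/c}|\le 2|t_1|^{k_j/c}$. Factoring out $|t_1|^{k_{\ell+1}/c}$ leaves $\sum_{j>\ell}|a_{k_j}|\,|t_1|^{(k_j-k_{\ell+1})/c}$. Absolute convergence of the series for $|t_1|^{1/c}$ in a small disc makes this uniformly bounded, so the tail is $O(t_1^{k_{\ell+1}/c})$. This is the same argument as in Lemma~\ref{thm:BoundLinPred}(b).

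For the head sum, Proposition~\ref{prop:coeff_reconstruction} gives $\|\tilde a_{k_j}-a_{k_j}\|=O(t_1^{(k_{\ell+1}-k_j)/c})$. Multiplying by $|t^{k_j/c}-t_1^{k_j/c}|\le 2|t_1|^{k_j/c}$, each of the finitely many terms is $O(t_1^{k_{\ell+1}/c})$. This gives the first equality.

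The main subtlety is that Proposition~\ref{prop:coeff_reconstruction} must be uniform in $t_1$ for fixed $\lambda$. There the $\varepsilon=1-\lambda$ factors cancel, and the constants depend only on $\lambda$ and the $k_i/c$, which are fixed. So the $O(\cdot)$ is uniform, and the implied constant in the theorem depends on $\rho$ and $\lambda$ but not on $t_1$.

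For the second equality, I would use $z_1-z^\ast=a_{k_1}t_1^{k_1/c}(1+o(1))$ with $a_{k_1}\neq 0$. This gives $\|z_1-z^\ast\|=\Theta(|t_1|^{k_1/c})$, hence $|t_1|^{1/c}=O(\|z_1-z^\ast\|^{1/k_1})$. Raising to the power $k_{\ell+1}$ yields $O(\|z_1-z^\ast\|^{k_{\ell+1}/k_1})$, exactly as at the end of the proof of Lemma~\ref{thm:BoundLinPred}(b).
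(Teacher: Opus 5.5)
Your proposal is correct and follows essentially the same route as the paper. The paper also subtracts the exact expansion from the predictor and bounds the first $\ell$ terms via Proposition~\ref{prop:coeff_reconstruction} and the tail by $O(t_1^{k_{\ell+1}/c})$, then converts using $\|z_1-z^\ast\|=\Theta(t_1^{k_1/c})$. Your justification of the tail bound via absolute convergence, and your remark that the constants are uniform in $t_1$, are details the paper leaves implicit.
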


\begin{proof}
Subtracting~\eqref{eq:GNALKSDGAHLKGHA} from \eqref{eq:GALSDGKAHSLGA} and then using~\eqref{eq:coeff_reconstruction:bound} gives
\begin{align*}
\|\hat z(t)-z(t)\|
\le& \sum_{j=1}^\ell \|\tilde a_{k_j}-a_{k_j}\|\cdot \left( t^{k_j/c} - t_1^{k_j/c} \right)  + O\left( t^{k_{\ell+1}/c} - t_1^{k_{\ell+1}/c} \right)
\\
\le & \sum_{j=1}^\ell {O}\!\left( t_1^{(k_{\ell+1}-k_j)/c} \right) \cdot O\left(  t_1^{k_j/c} \right)  + O\left( t_1^{k_{\ell+1}/c} \right)
= O\left( t_1^{k_{\ell+1}/c} \right)
\end{align*}
We also have $\|z_1 - z^*\| = \Theta(t_1^{k_1/c})$,
and thus the last expression is $O\left( \left(\|z_1 - z^*\|^{c/k_1}\right)^{k_{\ell+1}/c} \right)={O}\left(\|z_1 - z^*\|^{k_{\ell+1}/k_1}\right)$.
\end{proof}



Next, we instantiate the $\ell$-th order predictor to the cases $\ell=1,2,3$.

\subsubsection{Case $\ell=1$} In the simplest case of the 1-term predictor, the sequence of nodes contains only $k_1/c$.
 The Vandermonde matrix reduces to the scalar $V = 1$, making its inverse trivial. The sample differences vector consists of a single entry corresponding to the distance, $P = [z_1 - z_2]$. Applying the diagonal scaling factor directly gives the explicit coefficient:
\begin{equation}
\tilde{a}_1 = \frac{z_1 - z_2}{t_1^{k_1/c}(1-\lambda^{k_1/c})}
\end{equation}
The final predictor is given by
\begin{align*}
\hat{z}(t) &= z_1 + \tilde{a}_1 (t^{k_1/c}-t_1^{k_1/c}) 
\end{align*}
In particular,
\begin{align*}
\hat{z}(0) &=  z_1 - \tilde{a}_1 t_1^{k_1/c} = z_1 - \frac{z_1 - z_2}{t_1^{k_1/c}(1-\lambda^{k_1/c})} t_1^{k_1/c}
\end{align*}

Note that it is different from the linear predictor used in Section~\ref{sec:linear-predictor}, which is given by
\begin{align*}
\hat{z}'(0) &=  z_1 - \frac{c}{k_1} t_1 \dot{z}_1
\end{align*}
It can be seen that the two predictors become equivalent as $\lambda\rightarrow 1$ (or equivalently $t_2\rightarrow t_1$), since
\[
\lim_{t_2\to t_1} \tilde{a}_1
=
\lim_{t_2\to t_1} \frac{z_1-z_2}{t_1^{k_1/c}-t_2^{k_1/c}}
=
\frac{c}{k_1}t_1^{1-k_1/c}\dot{z}_1.
\]

\subsubsection{Case $\ell=2$} In practice we can also expand the coefficients for the 2-term Puiseux predictor ($\ell=2$),
 which utilizes the leading exponents $k_1/c$ and $k_2/c$, and the information of the first derivatives at points $z_1$ and $z_2$. 

Using the explicit definition of the inverse Vandermonde matrix for $\ell=2$, we have:
\[
V^{-1} = \frac{1}{k_2-k_1} \begin{bmatrix} k_2 & -c \\ -k_1 & c \end{bmatrix}.
\]
Applying this inverse to the vector $P = [z_1 - z_2, \; t_1\dot{z}_1 - t_2\dot{z}_2]^{\top}$, the linear system yields the scaled leading terms:
\[
V^{-1} \begin{bmatrix} z_1 - z_2 \\ t_1\dot{z}_1 - t_2\dot{z}_2 \end{bmatrix} = 
\frac{1}{k_2-k_1}
\begin{bmatrix} 
k_2(z_1 - z_2) - c(t_1\dot{z}_1 - t_2\dot{z}_2) \\ 
-k_1(z_1 - z_2) + c(t_1\dot{z}_1 - t_2\dot{z}_2)
\end{bmatrix}.
\]

By applying the diagonal scaling factor specified in Proposition~\ref{prop:coeff_reconstruction}, we obtain the explicit formulas for the approximated coefficients $\tilde{a}_1$ and $\tilde{a}_2$:
\begin{subequations}\label{eq:explicit_coeffs_ell2}
\begin{align}
\tilde{a}_1 &= \frac{k_2(z_1 - z_2) - c(t_1\dot{z}_1 - t_2\dot{z}_2)}{(k_2-k_1) \, t_1^{k_1/c}(1-\lambda^{k_1/c})}, \\[1em]
\tilde{a}_2 &= \frac{-k_1(z_1 - z_2) + c(t_1\dot{z}_1 - t_2\dot{z}_2)}{(k_2-k_1) \, t_1^{k_2/c}(1-\lambda^{k_2/c})}.
\end{align}
\end{subequations}

\subsubsection{Case $\ell=3$}\label{sec:cubic_coeffs}
For the 3-term Puiseux predictor, the algorithm incorporates second-order derivatives to reconstruct the first three leading coefficients. The sample differences vector expands to $P = [P_1, P_2, P_3]^{\top}$, defined as:
\begin{align*}
P_1 &= z_1 - z_2, \\
P_2 &= t_1\dot{z}_1 - t_2\dot{z}_2, \\
P_3 &= \left(t_1^2\ddot{z}_1 + t_1\dot{z}_1\right) - \left(t_2^2\ddot{z}_2 + t_2\dot{z}_2\right).
\end{align*}

Using the explicit formula for the inverse Vandermonde matrix with $\ell=3$, let us define the denominator constants $\Delta_i = \prod_{m \neq i} (k_i - k_m)$ for brevity:
\begin{align*}
\Delta_1 &= (k_1-k_2)(k_1-k_3), \\
\Delta_2 &= (k_2-k_1)(k_2-k_3), \\
\Delta_3 &= (k_3-k_1)(k_3-k_2).
\end{align*}

Evaluating the combinatorial sum for the entries of $V^{-1}$ produces the $3 \times 3$ inverse matrix:
\[
V^{-1} = \begin{bmatrix} 
\frac{k_2 k_3}{\Delta_1} & \frac{-c(k_2+k_3)}{\Delta_1} & \frac{c^2}{\Delta_1} \\[1em]
\frac{k_1 k_3}{\Delta_2} & \frac{-c(k_1+k_3)}{\Delta_2} & \frac{c^2}{\Delta_2} \\[1em]
\frac{k_1 k_2}{\Delta_3} & \frac{-c(k_1+k_2)}{\Delta_3} & \frac{c^2}{\Delta_3} 
\end{bmatrix}.
\]

Multiplying $V^{-1}$ by the sample vector $P$ and applying the diagonal scaling matrix yields
\begin{subequations}\label{eq:explicit_coeffs_ell3}
\begin{align}
\tilde{a}_1 &= \frac{k_2 k_3 P_1 - c(k_2+k_3)P_2 + c^2 P_3}{\Delta_1 \, t_1^{k_1/c}(1-\lambda^{k_1/c})}, \\[1em]
\tilde{a}_2 &= \frac{k_1 k_3 P_1 - c(k_1+k_3)P_2 + c^2 P_3}{\Delta_2 \, t_1^{k_2/c}(1-\lambda^{k_2/c})}, \\[1em]
\tilde{a}_3 &= \frac{k_1 k_2 P_1 - c(k_1+k_2)P_2 + c^2 P_3}{\Delta_3 \, t_1^{k_3/c}(1-\lambda^{k_3/c})}.
\end{align}
\end{subequations}

Note that in this case we need the second derivative $\ddot z$.
As before, we approximate it using finite differences, as described in the the next section. 

\subsection{Approximating the second derivative}\label{sec:second-derivative}
Recall that  the rule for $k_3/c$ and the cubic predictor in Section~\ref{sec:cubic_coeffs} require 
the second derivative $\ddot z$ for a given point $z$ on the homotopy curve. 
Exact computation of $\ddot z$ through the differentiation of the homotopy $h$ requires forming and solving systems with the Hessian tensor, which is computationally expensive. Instead, we employ a Hermite Finite Difference (HFD) scheme for arbitrarily spaced grids \parencite{fornberg2020algorithm}.

The path tracker naturally yields both the position $z_i$ and its exact first derivative $\dot z_i = -h_z(z_i, t_i)^{-1} h_t(z_i, t_i)$ via the Davidenko differential equation at no extra linear algebra cost. Assume a local geometric mesh of $N_s$ points $t_1, \dots, t_{N_s}$. The Hermite interpolating polynomial over these nodes takes the form:
$$ q(t) = \sum_{j=1}^{N_s} D_{j,N_s}(t) z_j + \sum_{j=1}^{N_s} E_{j,N_s}(t) \dot{z}_j, $$
where the basis functions are defined in terms of the standard Lagrange polynomials $L_{j,N_s}(t) = \prod_{m \neq j} \frac{t - t_m}{t_j - t_m}$ as:
$$ D_{j,N_s}(t) = \left(1 - 2 s_j (t - t_j)\right) (L_{j,N_s}(t))^2, \quad \text{with} \quad s_j = \sum_{m \neq j} \frac{1}{t_j - t_m}, $$
$$ E_{j,N_s}(t) = (t - t_j) (L_{j,N_s}(t))^2. $$
The second derivative at any evaluation point $t_i$ is directly approximated by the linear combination:
$$ \ddot{z}_i \approx \sum_{j=1}^{N_s} \left( w^{f}_{i,j} z_j + w^{fd}_{i,j} \dot{z}_j \right), $$
where the weights correspond to the exact second derivatives of the basis functions evaluated at $t_i$, namely $w^{f}_{i,j} = \ddot{D}_{j,N_s}(t_i)$ and $w^{fd}_{i,j} = \ddot{E}_{j,N_s}(t_i)$. These weights are computed efficiently via Fornberg's recursive algorithm \parencite{fornberg2020algorithm}. Leveraging both position and tangent data on an $N_s$-point stencil guarantees an asymptotic approximation error for the second derivative of order $O(\Delta t^{2N_s-2})$ without solving local linear systems.

In our specific implementation, we use a 5-point stencil $x_0, \dots, x_4$ where $x_i=(z_i,t_i)$ and $t_{i+1}= \rho t_i$ for some $\rho=1-\varepsilon$. This yields a theoretical approximation error of order $O(\Delta t^8)$. The third fractional exponent $k_3/c$ and the 3-term predictor coefficients are then estimated using the tuples $(t_i, z_i, \dot{z}_i, \ddot{z}_i)$ evaluated specifically at the nodes $i=1$ and $i=3$.






\section{Alternative method for estimating the exponents}
In this section we present another approach for estimating the exponents of a Puiseux series that extends the ratio rule to higher-order terms.
For brevity, we will denote $p_i=k_i/c$. Thus,
\begin{equation}\label{Puiseux}
    z(t) \ = \ z^* + a_1 t^{p_1} + a_2 t^{p_2} + \ldots \ = \ z^* + \sum_{j=1}^\infty a_j t^{p_j},
\end{equation}   
where $ z^*\in\mathbb C^n$, $a_j \in \mathbb{C}^n-\{0\} $ for each $j\ge 1$, and
$0<p_1<p_2<\ldots$ are strictly increasing rational numbers.
As always, we make the following technical assumption.
\begin{assumption}\label{A}
There exists integer $d$ such that $d p_i\in\mathbb N$ for all $i$, and the power series $z^*+\sum_{j=1}^\infty a_j s^{p_j d}$ 
obtained from~\eqref{Puiseux} by a changing $t^{1/d}\mapsto s$ is convergent in some neighborhood of
$s=0$.
\end{assumption}

Suppose that we know $p_1,\ldots,p_{m-1}$, and we want
to estimate $p_m$ (for $m\ge 1$). 
\begin{theorem}\label{th:ratio-rule}
Set $p_0=0$, and let $w=(w_0,\ldots,w_m)^\top\in\mathbb R^{m+1}$ be a vector satisfying
\begin{align}\label{eq:sumw}
\begin{pmatrix}
t_0^{p_0} & t_1^{p_0} & \cdots & t_m^{p_0}\\
t_0^{p_1} & t_1^{p_1} & \cdots & t_m^{p_1}\\
\vdots & \vdots & & \vdots\\
t_0^{p_{m-1}} & t_1^{p_{m-1}} & \cdots & t_m^{p_{m-1}}\\
t_0^{p_m} & t_1^{p_m} & \cdots & t_m^{p_m}
\end{pmatrix}
\cdot
\begin{pmatrix}
w_0 \\
w_1 \\
 \vdots\\
w_{m-1} \\
w_m
\end{pmatrix}
=
\begin{pmatrix}
0 \\
0 \\
 \vdots\\
0 \\
1
\end{pmatrix}
\end{align}
\noindent Suppose that $t_1,\ldots,t_m$ are functions of $t_0$ and $0<t_m<\ldots<t_0$.
Then $\hat p_m= p_m + O(t_0^{p_{m+1}-p_m})$ as $t_0\rightarrow 0$ where
$$
\hat p_m=\frac{\|B_m\|}{\|A_m\|},
\qquad
A_m \ := \ \sum_{i=0}^{m} w_i z(t_i), \quad 
B_m \ := \ \sum_{i=0}^{m} w_i t_i \dot z(t_i)
$$
Furthermore, $a_m = A_m + O(t_0^{p_{m+1}-p_m})$.

If $a_mt^{p_m}$ is the last term in the Puiseux series~\eqref{Puiseux}
then the estimates are exact: $\hat p_m=p_m$ and $A_m=a_m$.
\end{theorem}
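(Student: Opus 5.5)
The plan is to substitute the Puiseux series~\eqref{Puiseux} into $A_m$ and $B_m$ and use the orthogonality relations~\eqref{eq:sumw} to cancel all terms of order below $p_m$. Writing $z(t_i)=z^\ast+\sum_{j\ge1}a_jt_i^{p_j}$ gives
\[
A_m=z^\ast\sum_i w_i+\sum_{j\ge1}a_j\sum_{i=0}^m w_it_i^{p_j}.
\]
The rows $0,\dots,m-1$ of~\eqref{eq:sumw} kill the $z^\ast$ term (since $p_0=0$) and every term with $j\le m-1$. Row $m$ turns the $j=m$ term into exactly $a_m$. Hence
\[
A_m=a_m+\sum_{j>m}a_j\,S_j,\qquad S_j:=\sum_{i=0}^m w_it_i^{p_j}.
\]
Likewise $t\dot z(t)=\sum_j p_ja_jt^{p_j}$ (termwise differentiation is justified by Assumption~\ref{A}, since the series in $s=t^{1/d}$ converges), so
\[
B_m=p_ma_m+\sum_{j>m}p_ja_jS_j .
\]
If the series terminates at $j=m$, the tails vanish and we get $A_m=a_m$, $B_m=p_mA_m$, hence $\hat p_m=p_m$ exactly (as $p_m>0$ and $a_m\neq0$). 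This settles the last claim of the theorem.

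The main step is bounding the tails, i.e.\ showing $\sum_{j>m}p_j^{r}\|a_j\|\,|S_j|=O(t_0^{p_{m+1}-p_m})$ for $r=0,1$. The key observation is homogeneity. Set $\rho_i=t_i/t_0\in(0,1]$ and $\tilde w_i=t_0^{p_m}w_i$. Then~\eqref{eq:sumw} becomes the same system with $t_i$ replaced by $\rho_i$; the entry $t_0^{p_k}$ factors out of row $k$, and after rescaling the right-hand side is still $e_m$. Consequently $S_j=t_0^{p_j-p_m}\sum_i\tilde w_i\rho_i^{p_j}$. Since $0\le\rho_i\le1$, we get $|\sum_i\tilde w_i\rho_i^{p_j}|\le\|\tilde w\|_1$ for every $j$. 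This yields
\[
\Big\|\sum_{j>m}p_j^r a_jS_j\Big\|\le\|\tilde w\|_1\,t_0^{p_{m+1}-p_m}\sum_{j>m}p_j^r\|a_j\|\,t_0^{p_j-p_{m+1}} .
\]
The last sum is bounded for small $t_0$ by convergence of the series in $s$ (including the derivative series, which gives the $p_j$ factor). So both tails are $O(\|\tilde w\|_1\,t_0^{p_{m+1}-p_m})$.

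The obstacle I expect is controlling $\|\tilde w\|_1$ uniformly as $t_0\to0$. This needs the rescaled generalized Vandermonde matrix $(\rho_i^{p_k})$ to stay uniformly invertible, i.e.\ the ratios $\rho_i$ must stay in a compact set of distinct values in $(0,1]$. The statement only says the $t_i$ are functions of $t_0$ with $0<t_m<\dots<t_0$. I would interpret this, as in the intended geometric use $t_i=\rho^it_0$ or more generally $\rho_i$ fixed or convergent to distinct positive limits, so that $\|\tilde w\|=O(1)$. Invertibility of the limit matrix follows from the classical fact that generalized Vandermonde matrices with distinct positive nodes and distinct real exponents are nonsingular (Descartes' rule of signs for generalized polynomials $\sum_kc_kx^{p_k}$). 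The $O(\cdot)$ constants then depend on these ratios; I would state this dependence explicitly.

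Finally, the ratio estimate follows. From $A_m=a_m+\epsilon_A$ and $B_m=p_ma_m+\epsilon_B$ with $\|\epsilon_A\|,\|\epsilon_B\|=O(t_0^{p_{m+1}-p_m})$ and $a_m\ne0$, we get $\|A_m\|=\|a_m\|+O(\cdot)$, bounded away from zero, and $\|B_m\|=p_m\|a_m\|+O(\cdot)$. Therefore
\[
\hat p_m=\|B_m\|/\|A_m\|=p_m+O(t_0^{p_{m+1}-p_m}).
\]
The bound $a_m=A_m+O(t_0^{p_{m+1}-p_m})$ is already contained in the tail estimate for $A_m$.
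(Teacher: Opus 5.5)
\paragraph{A gap in the tail bound.} Your expansions $A_m=a_m+\sum_{j>m}a_jS_j$ and $B_m=p_ma_m+\sum_{j>m}p_ja_jS_j$, the exact case, and the rescaling $S_j=t_0^{p_j-p_m}\sum_i\tilde w_i\rho_i^{p_j}$ are the paper's steps; the paper's $L(u^{p_j})$ is your $S_j$. The gap is in how you bound $S_j$.

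The estimate $|\sum_i\tilde w_i\rho_i^{p_j}|\le\|\tilde w\|_1$ is only useful if $\|\tilde w\|_1=O(1)$. You get that by adding a hypothesis the theorem does not make, namely that the ratios $t_i/t_0$ converge to distinct limits. The theorem is stated for arbitrary $0<t_m<\dots<t_0$, and the coalescing regime is exactly the one the paper needs. For $m=1$ the rule is \cRATIOcode, which Section~\ref{sec:linear-predictor} uses with $t_1=(1-|t_0|^\gamma)t_0$.

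Write $t_1=(1-\varepsilon)t_0$. Then $\tilde w_0=-\tilde w_1=1/(1-(1-\varepsilon)^{p_1})\approx 1/(p_1\varepsilon)$. So your tail bound $\|\tilde w\|_1\,t_0^{p_2-p_1}$ is not $O(t_0^{p_2-p_1})$, and it does not even tend to $0$ when $\varepsilon=t_0^\gamma$ with $\gamma\ge p_2-p_1$. Yet the true coefficient satisfies $S_j=t_0^{p_j-p_1}\frac{1-(1-\varepsilon)^{p_j}}{1-(1-\varepsilon)^{p_1}}\le\frac{p_j}{p_1}\,t_0^{p_j-p_1}$. The $\ell^1$ bound throws away exactly the cancellation in $\sum_i\tilde w_i\rho_i^{p_j}$ that keeps $S_j$ bounded; this is a divided-difference effect.

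\paragraph{The missing ingredient.} What you need is a bound on $S_j$ that is uniform in the nodes. The paper gets it in three steps.
\begin{enumerate}
\item Cramer's rule plus Laplace expansion give $S_j=D_{(\alpha,p_j)}(t)/D_{(\alpha,p_m)}(t)$ with $\alpha=(p_0,\dots,p_{m-1})$.
\item Homogeneity gives $S_j=t_0^{p_j-p_m}\,D_{(\alpha,p_j)}(x)/D_{(\alpha,p_m)}(x)$ with $x=t/t_0\in(0,1]^{m+1}$.
\item The HCIZ formula finishes it. Since $X=\mathrm{diag}(\log x_i)$ has nonpositive entries, raising an exponent can only decrease the integrand. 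This yields $\bigl|D_{(\alpha,q)}(x)/D_{(\alpha,p_m)}(x)\bigr|\le\prod_{k=0}^{m-1}\frac{q-p_k}{p_m-p_k}$ for every admissible $x$ (Corollary~\ref{cor:larger-exponents}).
\end{enumerate}
This constant is independent of the $t_i$ and grows only polynomially in $q$, so Assumption~\ref{A} still sums the tail.

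If you replace your $\|\tilde w\|_1$ estimate by this node-independent bound, the rest of your argument goes through verbatim. As written, you have proved the theorem only under an extra separation hypothesis on $t_1/t_0,\dots,t_m/t_0$.
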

Note that the matrix in system~\eqref{eq:sumw} is a generalized Vandermonde matrix with nodes $(t_0,\ldots,t_m)$ and exponents $(0,p_1,\ldots,p_m)$. 
It is well-known that it is non-singular
if  $(p_1,\ldots,p_m)$ and $(t_0,\ldots,t_m)$ are  real strictly positive vectors with distinct components
(see e.g.\ \cite[Chapter 22]{Higham:02}). Therefore, system~\eqref{eq:sumw} has a unique solution $w$ in this case.

This solution depends on $p_m$, so it might seem that it cannot be used for computing $p_m$.
However, the estimate $\hat p_m$ does not change if vector $w$ is multiplied by a non-zero scalar.
Therefore, it suffices to find any non-zero vector $w$ satisfying the first $m$ equations of system~\eqref{eq:sumw};
this can be done without knowing $p_m$. (We chose the specific normalization in Theorem~\ref{th:ratio-rule}
so that $A_m$ gives an estimation of $a_m$).

By the Cramer's rule, one can for example take
$$
w_i \ = \ (-1)^i\det V_m^{(i)}, \qquad i=0, \ldots, m
$$
where $V_m^{(i)}$ is the submatrix of the  matrix in~\eqref{eq:sumw}
obtained by deleting the $i$-th column and the last row.
If the points are geometrically sampled, i.e.\ $t_i = \rho^i t_0$ for some $\rho\ne 1$, then
the weights can also be obtained by taking the polynomial
$$
W_m(x)= \prod_{j=0}^{m-1}(x-\rho^{p_j})
$$
and expanding it as $W_m(x)=\sum_{i=0}^m w_ix^i$. Vector $w=(w_0,\ldots,w_m)^\top$ of the coefficients satisfies the first $m$ equations of system~\eqref{eq:sumw}
since the LHS of the $j$-th equation for $j\in[0,m-1]$  can be written as
$$
  \sum_{i=0}^{m}w_i t_i^{p_j} 
  =\sum_{i=0}^{m}w_i (t_0 \rho^i)^{p_j} = t_0^{p_j} \sum_{i=0}^{m}w_i (\rho^{p_j})^i=t_0^{p_j} W_m(\rho^{p_j})=0
$$

\subsection{Proof of Theorem~\ref{th:ratio-rule}}
Let $L$ be the linear functional that maps function $f$ to the value
\begin{equation}
L(f)=\sum_{i=0}^m w_i f(t_i)
\end{equation}
We will apply this functional to functions of the form $u\mapsto u^q$. System~\eqref{eq:sumw} implies that
\begin{equation}\label{eq:NGLAKJSHGLAKSG}
L(u^{p_0})=L(u^{p_1})=\ldots=L(u^{p_{m-1}})=0,
\qquad
L(u^{p_{m}})=1
\end{equation}
Denoting $a_0=z^*$, we can write
\begin{align*}\label{eq:GNAKDHGLASHG}
z(t) \ &= \ \sum_{j=0}^{\infty}a_jt^{p_j}  \\
t\dot z(t) \ &= \ \sum_{j=0}^{\infty}a_j p_j t^{p_j} 
\end{align*}
Applying operator $L$ to these equations term-by-term and then using~\eqref{eq:NGLAKJSHGLAKSG} gives
\begin{subequations}
\begin{eqnarray} \label{eq:AmBm-L}
A_m \ &=& \ \sum_{j=0}^{\infty}a_jL(u^{p_j})  \;\; = \;\; a_m + \sum_{j=m+1}^{\infty} a_jL(u^{p_j}) \label{eq:Am-L} \\
B_m \ &=& \ \sum_{j=0}^{\infty}a_j p_j L(u^{p_j}) \;\; = \;\; a_m p_m + \sum_{j=m+1}^{\infty}a_j p_j L(u^{p_j}) \label{eq:Bm-L}
\end{eqnarray}
\end{subequations}
Note, if $a_m t^{p_m}$ is the last term of the Puiseux series  then $A_m=a_m$, $B_m=a_m p_m$ and so $\frac{\|B_m\|}{\|A_m\|}=p_m$.

Our next task to bound terms $L(u^{p_j})$ for $j\ge m+1$.
For vectors $\alpha=(\alpha_1,\ldots,\alpha_n)$ and $x=(x_1,\ldots,x_n)$ define
 the generalized Vandermonde matrix, power determinant and the ordinary Vandermonde product via
\begin{align}
V_\alpha(x)&=
\begin{pmatrix}
x_1^{\alpha_1} & \cdots & x_n^{\alpha_1}\\
\vdots &  & \vdots\\
x_1^{\alpha_n} & \cdots & x_n^{\alpha_n}
\end{pmatrix}
\\
D_\alpha(x)&={\tt det} V_\alpha(x)
\\
\Delta(x) &= \prod_{1\le i < j \le n} (x_j-x_i) 
\end{align}

We will use the Harish-Chandra-Itzykson-Zuber formula~\cite{HarishChandra,ItzyksonZuber}, see also \cite{Tao:HCIZ}.
For diagonal matrices it can be stated as follows.

\begin{theorem}[\cite{HarishChandra,ItzyksonZuber}] \label{th:HCIZ}
Let $\alpha=(\alpha_1,\ldots,\alpha_n)$, $\beta=(\beta_1,\ldots,\beta_n)$ be strictly increasing sequences in $\mathbb R^n$,
and let $A={\tt diag}(\alpha_1,\ldots,\alpha_n)\in\mathbb R^{n\times n}$, $B={\tt diag}(\beta_1,\ldots,\beta_n)\in\mathbb R^{n\times n}$.
Then
\begin{equation}\label{eq:HCIZ}
 \int_{\mathcal U(n)}
   \exp \bigl(\operatorname{tr}(AQBQ^*)\bigr)\,dQ
 =\left(\prod_{k=1}^{n-1}k!\right)
  \frac{\det(e^{\alpha_j \beta_i})_{i,j=1}^{n}}
       {\Delta(\alpha)\Delta(\beta)}
\end{equation}
where $\mathcal U(n)=\{Q\in\mathbb C^{n\times n}\::\: QQ^\ast=I\}$ is the unitary group
and $dQ$ is the Haar probability measure on $\mathcal U(n)$.
\end{theorem}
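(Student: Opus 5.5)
We prove the identity by character expansion. We expand both sides as series in the powers of the eigenvalues and match them term by term using Schur polynomials. Throughout, we write $\delta=(n-1,n-2,\ldots,0)$. For a partition $\lambda=(\lambda_1\ge\ldots\ge\lambda_n\ge 0)$ we put $m=\lambda+\delta$, so that $m_1>m_2>\ldots>m_n\ge 0$. We use the bialternant formula
\[
s_\lambda(x_1,\ldots,x_n)=\frac{\det(x_j^{m_i})_{i,j}}{\Delta(x)},
\]
with the sign convention chosen to be consistent with the paper's $\Delta(x)=\prod_{i<j}(x_j-x_i)$. We also use the Weyl dimension formula
\[
s_\lambda(1^n)=\frac{\prod_{i<j}(m_i-m_j)}{\prod_{k=1}^{n-1}k!}.
\]

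\textbf{Step 1: expand the left-hand side.} We write
\[
\exp\bigl(\operatorname{tr}M\bigr)=\sum_{k\ge0}\frac{(\operatorname{tr}M)^k}{k!},\qquad M=AQBQ^*.
\]
By Schur--Weyl duality (equivalently, the Frobenius formula applied to the identity class),
\[
(\operatorname{tr}M)^k=\sum_{|\lambda|=k,\ \ell(\lambda)\le n} f^\lambda\, s_\lambda(M),
\]
where $f^\lambda=k!\prod_{i<j}(m_i-m_j)/\prod_i m_i!$ is the dimension of the $S_k$-irrep. Next we integrate each term over $\mathcal U(n)$. Since $s_\lambda(M)=\chi_\lambda(AQBQ^*)$ is a polynomial character, Schur orthogonality for the matrix coefficients of the irreducible representation $\pi_\lambda$ gives
\[
\int \chi_\lambda(AQBQ^*)\,dQ=\frac{\chi_\lambda(A)\chi_\lambda(B)}{\dim\pi_\lambda}.
\]
This follows from $\int\pi(Q)X\pi(Q)^*dQ=\frac{\operatorname{tr}X}{\dim\pi}I$, extended to arbitrary $A,B$ by polynomiality. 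Moreover $\dim\pi_\lambda=s_\lambda(1^n)$. The interchange of sum and integral is justified by dominated convergence: $|\operatorname{tr}M|\le\|A\|\,\|B\|\,n$ uniformly in $Q$, so the exponential series converges uniformly on the compact group.

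\textbf{Step 2: simplify the constants.} Combining Step 1 with the formulas above, the left-hand side equals
\[
\sum_\lambda \frac{f^\lambda}{|\lambda|!\,s_\lambda(1^n)}\,s_\lambda(\alpha)s_\lambda(\beta)
=\frac{\prod_{k=1}^{n-1}k!}{\Delta(\alpha)\Delta(\beta)}\sum_{m_1>\ldots>m_n\ge0}\frac{\det(\alpha_j^{m_i})\det(\beta_j^{m_i})}{\prod_i m_i!}.
\]
The factors $\prod_{i<j}(m_i-m_j)$ coming from $f^\lambda$ and from $s_\lambda(1^n)$ cancel exactly. This is the point where the constant $\prod_{k=1}^{n-1}k!$ emerges.

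\textbf{Step 3: expand the right-hand side.} We expand each entry as $e^{\alpha_j\beta_i}=\sum_{r\ge0}\alpha_j^r\beta_i^r/r!$. This writes the matrix $(e^{\alpha_j\beta_i})$ as an infinite product $X\,D\,Y$ with $X_{ir}=\beta_i^r$, $D=\operatorname{diag}(1/r!)$ and $Y_{rj}=\alpha_j^r$. The Cauchy--Binet formula, which is valid here by absolute convergence, then gives
\[
\det(e^{\alpha_j\beta_i})=\sum_{m_1>\ldots>m_n\ge0}\frac{\det(\beta_i^{m_k})\det(\alpha_j^{m_k})}{\prod_k m_k!}.
\]
This is exactly the sum obtained in Step 2, which proves~\eqref{eq:HCIZ}.

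I expect the main obstacle to be Step 1, specifically the integral $\int s_\lambda(AQBQ^*)\,dQ$ and the bookkeeping of the identity $f^\lambda/(k!\,s_\lambda(1^n))=\prod_{k=1}^{n-1}k!/\prod_i m_i!$. A secondary risk is getting the signs and orderings consistent between $\Delta$, the bialternant and Cauchy--Binet. I would fix one ordering convention at the start and check the whole argument on $n=2$ before writing the general case.
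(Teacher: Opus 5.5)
The paper gives no proof of this statement. It imports the HCIZ formula as a black box from \cite{HarishChandra,ItzyksonZuber} (with \cite{Tao:HCIZ} as exposition) and uses it only to derive Corollary~\ref{cor:larger-exponents}. Your character-expansion argument is a correct, self-contained proof.

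\textbf{The bookkeeping you flagged works out.}
\begin{itemize}
\item The Frobenius formula for $f^\lambda$ (valid for any $n\ge\ell(\lambda)$) and the Weyl dimension formula carry the same factor $\prod_{i<j}(m_i-m_j)$. Hence $f^\lambda/(|\lambda|!\,s_\lambda(1^n))=\prod_{k=1}^{n-1}k!/\prod_i m_i!$, as you claim.
\item Every sign or ordering convention enters twice: once through $\alpha$, once through $\beta$, and with the same ordering of $m$ in both Cauchy--Binet minors. So all such signs cancel.
\item The $n=2$ check passes. Under Haar measure $|Q_{11}|^2$ is uniform on $[0,1]$, and both sides equal $\bigl(e^{\alpha_1\beta_1+\alpha_2\beta_2}-e^{\alpha_1\beta_2+\alpha_2\beta_1}\bigr)/\bigl((\alpha_2-\alpha_1)(\beta_2-\beta_1)\bigr)$.
\end{itemize}

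\textbf{Two small points to state explicitly.}
\begin{itemize}
\item Schur's lemma requires $\pi_\lambda$ to remain irreducible when restricted to $U(n)$. This holds by Weyl's unitarian trick.
\item You should justify passing from the series grouped by $k$ to the sum over $m_1>\cdots>m_n\ge 0$. The bound you already need for Cauchy--Binet, $\sum_m|\det(\alpha_j^{m_i})|\,|\det(\beta_j^{m_i})|/\prod_i m_i!<\infty$, makes the $\lambda$-series absolutely convergent, so the regrouping is harmless.
\end{itemize}

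\textbf{What the two routes buy.} Citing Harish-Chandra costs nothing and gives much more generality. His Lie-theoretic proof, via invariant differential operators, yields the analogous formula for every compact connected Lie group, with $\Delta$ replaced by the product over positive roots and the determinant by an alternating sum over the Weyl group. Your argument is specific to $U(n)$. In exchange it is elementary modulo standard facts: Schur--Weyl duality, Schur orthogonality, two dimension formulas and Cauchy--Binet. It holds verbatim for complex $\alpha,\beta$. It also shows that the monotonicity hypothesis is needed only to make $\Delta(\alpha)\Delta(\beta)\neq 0$, since both sides are invariant under permuting the $\alpha_i$, or the $\beta_i$, separately.
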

\noindent Note that the integrand in~\eqref{eq:HCIZ} is real and positive since
$
\operatorname{tr}(AQBQ^*)=\sum_{i,j}\alpha_i\beta_j|Q_{ij}|^2
$
and $|Q_{ij}|^2\in\mathbb R$. 

\begin{corollary}\label{cor:larger-exponents}
Suppose that $\alpha_1<\ldots<\alpha_n$, $\beta_1<\ldots<\beta_n$ and $\alpha_i\le \beta_i$ for each $i\in[n]$.
Then
$$
\frac{D_\beta(x)}{D_\alpha(x)} \;\; \le \;\;\frac{\Delta(\beta)}{\Delta(\alpha)} \qquad \forall x\in(0,1]^n\::\:x_1<\ldots<x_n
$$
\end{corollary}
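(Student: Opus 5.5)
We use the HCIZ formula (Theorem~\ref{th:HCIZ}) to express the ratio of generalized Vandermonde determinants through integrals over $\mathcal U(n)$, and then compare the integrands pointwise. Write $x_i=e^{y_i}$ with $y_i=\log x_i\le 0$ and $y_1<\ldots<y_n$. Then $x_j^{\alpha_i}=e^{\alpha_i y_j}$, so $D_\alpha(x)=\det(e^{\alpha_i y_j})$, which is exactly the determinant appearing in~\eqref{eq:HCIZ} with $A={\tt diag}(y)$ and $B={\tt diag}(\alpha)$, up to a transpose that does not change the determinant. Theorem~\ref{th:HCIZ} therefore gives
\begin{equation*}
\frac{D_\alpha(x)}{\Delta(\alpha)} = \frac{\Delta(y)}{\prod_{k=1}^{n-1}k!}\int_{\mathcal U(n)}\exp\Bigl(\sum_{i,j} y_i\alpha_j|Q_{ij}|^2\Bigr)\,dQ ,
\end{equation*}
and the same identity holds with $\beta$ in place of $\alpha$. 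Since $\Delta(y)>0$, both sides are positive, so the inequality $D_\beta/D_\alpha\le \Delta(\beta)/\Delta(\alpha)$ is equivalent to
\begin{equation*}
\int_{\mathcal U(n)}\exp\Bigl(\sum_{i,j} y_i\beta_j|Q_{ij}|^2\Bigr)dQ\le \int_{\mathcal U(n)}\exp\Bigl(\sum_{i,j} y_i\alpha_j|Q_{ij}|^2\Bigr)dQ .
\end{equation*}

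This last inequality follows by comparing the integrands for each fixed $Q$. The difference of the exponents is $\sum_{i,j} y_i(\beta_j-\alpha_j)|Q_{ij}|^2$. Every factor in this sum has a fixed sign: $y_i\le 0$ because $x_i\le 1$, $\beta_j-\alpha_j\ge 0$ by hypothesis, and $|Q_{ij}|^2\ge 0$. Hence the difference is nonpositive, the $\beta$-integrand is at most the $\alpha$-integrand for every $Q$, and integrating against the probability measure $dQ$ gives the claim.

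The main obstacle is bookkeeping of signs and orientation. I need to confirm that the determinant in~\eqref{eq:HCIZ} matches $D_\alpha(x)$ with the same row and column orientation; any sign arising from transposition or reordering appears identically in the $\alpha$ and $\beta$ versions and cancels in the ratio. I also need to check that $\Delta(y)$, $\Delta(\alpha)$ and $\Delta(\beta)$ are positive for increasing sequences, so that dividing preserves the inequality. Positivity of $\Delta(y)$ uses that $\log$ is increasing, so $y$ is strictly increasing. If some $x_i=1$, then $y_i=0$, which is still allowed.
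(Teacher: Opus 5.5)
Your proposal is correct and is essentially the paper's proof: apply Theorem~\ref{th:HCIZ} with one diagonal matrix built from the exponents and the other from $\log x_i$, take the ratio of the $\beta$- and $\alpha$-versions, and compare the integrands pointwise using $\log x_j\le 0$ and $\beta_i-\alpha_i\ge 0$. Your checks that $\Delta(y)$, $\Delta(\alpha)$, $\Delta(\beta)$ and the integrals are positive, so that dividing preserves the inequality, make explicit a step the paper leaves implicit.
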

\begin{proof}
Let us apply Theorem~\ref{th:HCIZ} to diagonal matrices $\tilde A={\tt diag}(\alpha_1,\ldots,\alpha_n)$ and $\tilde B={\tt diag}(\log x_1,\ldots,\log x_n)$.
It can be seen that the determinant in RHS of~\eqref{eq:HCIZ} is exactly $D_a(x)$. 
Similarly, we can apply Theorem~\ref{th:HCIZ} to diagonal matrices $\tilde{\tilde A}={\tt diag}(\beta_1,\ldots,\beta_n)$ and $\tilde B$.
By taking the ratio of the expressions
%
we obtain
\begin{equation}\label{eq:GHASLGHASLGKHAS}
\frac{D_\beta(x)}{D_\alpha(x)}= 
\frac{\Delta(\beta)}{\Delta(\alpha)}\cdot
\frac{ \int_{\calU(n)} e^{{\tt tr}(BQXQ^\ast)}dQ }{ \int_{\calU(n)} e^{{\tt tr}(AQXQ^\ast)}dQ }
\end{equation}
where $A={\tt diag}(\alpha_1,\ldots,\alpha_n)$, $B={\tt diag}(\beta_1,\ldots,\beta_n)$ and $X={\tt diag}(\log x_1,\ldots,\log x_n)$.
We have $\beta_i-\alpha_i\ge 0$ and $\log x_i\le 0$ for each $i$, and so
$$
 {\tt tr}((B-A)QXQ^\ast)) = \sum_{i,j}(\beta_i-\alpha_i)\cdot (\log x_j)\cdot |Q_{ij}|^2 \le 0
$$
Therefore, the numerator in the last term of~\eqref{eq:GHASLGHASLGKHAS} is smaller or equal than the denominator.
This implies the claim.
\end{proof}

Define $\alpha=(p_0,\ldots,p_{m-1})$ and $t=(t_0,\ldots,t_m)$.
We claim that
\begin{equation}\label{eq:IGAYGHAKJSFGA}
L(u^q)=\frac{D_{(\alpha,q)}(t)}{D_{(\alpha,p_m)}(t)}\qquad\qquad\forall q\in\mathbb R
\end{equation}
Indeed, by Cramer's rule,
$$
w_i=\frac{(-1)^{m+i}{D_\alpha(t_{-i})}}{D_{(\alpha,p_m)}(t)}
$$
where $t_{-i}$ is the vector obtained from $t$ by deleting the $i$-th entry.
Therefore,
$$
D_{(\alpha,p_m)}(t)\cdot L(u^q)=\sum_{i=0}^m (-1)^{m+i}{D_\alpha(t_{-i})} \cdot t_i^q
$$
The last expression is the Laplace expansion of $D_{(\alpha,q)}(t)$ along the last row of matrix $V_{(\alpha,q)}(t)$.
This establishes~\eqref{eq:IGAYGHAKJSFGA}.

Define $x_i=t_i/t_0$ for $i\in[0,m]$, then $x\in[0,1]^{m+1}$.
Using homogeneity, we get for $q\ge p_{m+1}$ 
$$
L(u^q)=\frac{D_{(\alpha,q)}(t)}{D_{(\alpha,p_m)}(t)}=t_0^{q-p_m} \frac{D_{(\alpha,q)}(x)}{D_{(\alpha,p_m)}(x)}
$$
Corollary~\ref{cor:larger-exponents} gives that $\left|\frac{D_{(\alpha,q)}(x)}{D_{(\alpha,p_m)}(x)}\right| \le C(q)$
where $C(q)\eqdef\frac{\Delta(\alpha,q)}{\Delta(\alpha,p_m)}$. Plugging this into~\eqref{eq:Am-L} gives
$$
\|A_m-a_m\|\le \sum_{j=m+1}^\infty \|a_j\| C(p_j) t_0^{p_j-p_m}
$$
Note that $C(q)=\prod_{k=0}^{m-1}\frac{q-p_k}{p_m-p_k}$ grows only polynomially with $q$. Using Assumption~\ref{A}, we conclude that
$A_m = a_m + O(t_0^{p_{m+1}-p_m})$. By a similar argument, 
$B_m = p_m a_m + O(t_0^{p_{m+1}-p_m})$. 
Since $a_m \ne 0$, it follows that 
$
\frac{\|B_m\|}{\|A_m\|}=p_m + O(t_0^{p_{m+1}-p_m})
$.


 
\section{Numerical results}

In this section, we evaluate the computational performance of the proposed methods. To establish a 
baseline, we benchmark our approach against a classical \textit{predictor-corrector path tracker} equipped with a standard \textit{power-series endgame}, as detailed in the foundational literature \parencite{SommeseWampler, BatesHauensteinSommeseWampler2013}.


\subsection{Description of implementation}\label{section:description_implementation}

%
%
%

\myparagraph{Classic Power Series Endgame (\CLASSICcode)}
For our numerical experiments, the classical power-series endgame serves as the baseline method for comparison \parencite{SommeseWampler, BatesHauensteinSommeseWampler2013}. First, the tracking procedure is initialized using a fixed-point Newton homotopy of the form $h(z, t) = (1-t)f(z) + t\gamma(z - z_0) = 0$, where $z_0$ is an initial approximation close to the isolated singular root $z^*$ and $\gamma$ is a random complex constant (the ``gamma trick'') used to ensure the path avoids singularities prior to $t=0$. 

Next, we track the solution path as $t$ moves from $1$ to $0$ using the standard predictor-corrector method equipped with a linear predictor, as described in Section~\ref{section:Background}. The Newton corrector is restricted to a maximum number of allowed steps (5 in our implementation) to achieve convergence within a prescribed tolerance. If convergence is not detected, the step is rejected, the step size $\Delta t$ is decreased, and the predictor-corrector step is performed again. Conversely, the method employs an adaptive step size; after a certain number of successive successful iterations, the step size is multiplied by a constant, which in our case is 2.

As the path progresses and enters the \textit{endgame operating zone}, the cycle number $c$ is estimated using the ratio method described in Section~\ref{section:estimatesk1ck2c}. This cycle number is dynamically updated in the linear predictor to correctly anticipate the fractional power series behaviour of the path. Additionally, the method utilizes a cubic predictor to accelerate progress; this is achieved by performing Hermite interpolation using the positions and tangent derivatives ($z$ and $dz/dt$) of two consecutive points on the tracker path that share the same detected cycle number.

\myparagraph{ArcLength Endgame (\gls{AL})}
The \gls{AL} method introduces a specialized endgame strategy for corank-1 systems
 designed to circumvent the ill-conditioning of the Jacobian matrix as the path approaches a singular root. Initially, the path is tracked using the classical predictor-corrector method, which continuously monitors the evolution of the cycle number estimations via the  ratio (\cRATIOcode) and logarithmic (\cLOGcode) rules. Once both estimators converge to a shared integer value within a strict tolerance (typically $10^{-2}$), the \gls{AL} method takes over the tracking process, inheriting the state data from the classical tracker.

Instead of relying on the standard fractional power-series endgame, the \gls{AL} method dynamically restructures the Newton corrector into an augmented, well-conditioned linear system. At each tracking step, the method evaluates the augmented matrix $h_x=[h_z, h_t]$ (i.e. the Jacobian matrix evaluated at the last point in the tracking process) to isolate its null-space vector. Using this approximate kernel, it constructs an augmented square Jacobian matrix, denoted as $\bar{h}_x$. By appending an orthogonal hyperplane constraint—derived from the tangent vector of the path—the system regularizes the singularity. This localized augmentation ensures that the modified Jacobian $\bar{h}_x$ retains full rank, allowing the Newton iterations to maintain quadratic convergence deep into the singular regime.

For path progression, the \gls{AL} method replaces the standard power-series endgame with a highly adaptive, data-driven schedule based on the locked cycle number estimator ($c$). 
We observed experimentally that in all our corank-1 benchmark instances
the fractional powers governing the path geometry follow the regular sequence $1/c, 2/c, 3/c$.
Accordingly, we assume that $(k_1,k_2,k_3)=(1,2,3)$ in our implementation,
and thus avoid a more costly estimation of higher-order exponents.
This enables a clean and direct comparison with \CLASSICcode. 
 The algorithm continuously monitors the variance of the sequence of cycle number estimates over successive steps. While the variance remains above a prescribed threshold, the method uses smaller steps, calculating the next step size using an adaptively scaling fractional power $\eta$. However, once the variance drops below the threshold and the cycle estimates stabilize (indicating that the asymptotic geometry of the path is fully resolved), the algorithm sets the target parameter to $t=0$ directly, reaching the singular root using the final, well-conditioned augmented corrector. Furthermore, we observe that the performance of both methods can exhibit very different behavior when certain tracking parameters are modified. For example, changing the adaptive step-size schedule for the \CLASSICcode{} method to a more aggressive scheme induces improved performance for some instances, allowing larger steps along the path. However, this aggressive scaling can also significantly increase the total number of matrix inversions, as the tracker accumulates many failed steps—causing the corrector to reject the point, shrink the step size, and recompute the matrix inverse repeatedly.

\myparagraph{Lifted ArcLength Endgame (\gls{LAL})}
Extending the approach from the corank-1 case, the \gls{LAL} method
 relies on a similar predictor-corrector scheme. Much like the \gls{AL} method, it begins by tracking the path with classical techniques until the fractional exponent estimators stabilize. However, critical modifications are introduced to sustain the tracking process toward the singular root. The primary difference is the implementation of a dynamic homotopy reset, introducing the modified Newton homotopy $h^v(z,t)$ discussed in Section~\ref{section:section_LAL}. This is defined as:
\begin{equation}
h^v(z,t) = f(z) - t \frac{f(v)}{\|f(v)\|},
\end{equation}
with the starting point initialized at $(z_\circ, t_\circ) = (v, \|f(v)\|)$.

During the predictor phase, we first estimate the fractional exponents as detailed in Section~\ref{section:estimatesk1ck2c}. Specifically, we employ the continuous path-limit (\cRATIOcode) rule to estimate the leading fractional exponent, and the extended geometric sequence (\cLOGcode) rule for the subsequent fractional exponents. Additionally, when computing the third fractional exponent, we utilize the $O(\Delta t^8)$ finite difference approximation for the derivative of the auxiliary function $t\dot{z}$, as described at the beginning of this section.

As discussed in Section~\ref{section:section_LAL}, setting the step size to jump directly to the target root at $t=0$ (i.e., setting $\rho=0$) is prohibitive, as the subsequent corrector steps exhibit highly unstable progress. Thus, we introduce a dynamic shrinking factor updated after every iteration. Specifically, the predicted parameter value is defined as $\hat{t} = \rho \cdot t_\circ$, where the step ratio is explicitly formulated as $\rho = |t_\circ|^\eta$. To systematically govern the progression toward the singularity, the scaling exponent is initialized at $\eta = 1.1$ and updated iteratively via the geometric rule $\eta \leftarrow \eta^\alpha$. For failed iterations, the step size is penalized by setting $\alpha = 0.5$. For successful iterations, we attempt to accelerate tracking by setting $\alpha = 1.2$; however, this acceleration is strictly conditional. 

The algorithm continuously monitors the variance across the coordinate-wise estimates for each exponent order (e.g., tracking distinct variances for the $k_1/c$ ensemble and the $k_2/c$ ensemble). The scaling exponent $\eta$ is permitted to increase only when these variances drop below a predefined tolerance threshold, ensuring the local asymptotic geometry has stabilized. Furthermore, to prevent the step size $\rho$ from shrinking dangerously close to $0$ prematurely, we impose an upper bound on $\eta$. This bound is set to $(k_2/k_1)^\beta$ for the quadratic predictor (\textsc{Q}) and $(k_3/k_1)^\beta$ for the cubic predictor (\textsc{C}). In our implementation, we evaluate thresholds of $\beta=0.5$ and $\beta=0.9$, yielding the variants labeled \textsc{Q5}, \textsc{Q9}, \textsc{C5}, and \textsc{C9} in the convergence plots.

In the corrector phase, we augment the Jacobian matrix $h_x=[h_z, h_t]$ similarly to the standard \gls{AL} method, which effectively reduces the corank by exactly one. However, a second extension is required to completely recover the numerical rank of the system. The algorithm performs a \gls{SVD} on the standard augmented matrix $h^v_x=:[H_z, H_t]$ to isolate the near-zero singular values (falling below a strict tolerance threshold, set to $10^{-5}$ in our implementation). Let $U$ be the matrix whose columns $u_i$ correspond to these near-null left-singular vectors.

Using this basis, we construct the full row-rank block matrix $[H_z, H_t, U]$. To formulate a well-posed, square augmented Jacobian $\bar{H}_x$, we append an orthogonal constraint block $Q$ derived from the tangent space:
$$Q = \Big[ \big( H_z^{-1} [H_t, U] \big)^\dagger, -I \Big],$$
where $\dagger$ denotes the Hermitian transpose. This yields the fully augmented polynomial system:
$$\bar{H}(z, t, \xi) = \begin{bmatrix} h^v(z,t) + U\xi \\[0.5em] Q \begin{bmatrix} z - \hat{z} \\ t - \hat{t} \\ \xi \end{bmatrix} \end{bmatrix} = 0,$$
where $\xi$ represents the vector of auxiliary artificial variables, and $(\hat{z}, \hat{t})$ is the predicted state. Finally, this well-conditioned system is solved using the Newton's method, terminating once the Newton step-ratio contracts below a prescribed tolerance. 
Because of the geometric relaxation introduced by the orthogonal hyperplanes, the newly corrected point $(z_n, t_n)$ no longer lies strictly on the exact solution curve $h^v(z,t)=0$. Thus, to continue tracking toward the root, the algorithm dynamically resets the Newton homotopy by setting $v \gets z_n$, and the predictor-corrector sequence repeats for the subsequent step.

\myparagraph{Instances}
To rigorously evaluate the performance of our endgame strategies, we benchmark against several well-known zero-dimensional polynomial systems from the \gls{NAG} literature.
 These include the Griewank-Osborne system \parencite{griewank1981newton}, characterized by its narrow domain of convergence near the singularity; Lecerf's deflation benchmark \parencite{lecerf2002quadratic}; and the Caprasse system \parencite{hauenstein2015certifying}. 
To systematically isolate the impact of the Jacobian corank ($\kappa$) on predictor stability, we evaluate our methods on a family of crafted polynomial systems. These instances are constructed with a prescribed singular root at the origin, $z^* = \mathbf{0} \in \mathbb{C}^n$, using the form:
\begin{equation}
    F(z) = A z + D(z) = 0
\end{equation}
Here, $A \in \mathbb{Z}^{n \times n}$ is a dense random integer matrix explicitly constructed to have rank $n - \kappa$. Because the Jacobian at the origin reduces strictly to this linear part ($JF(\mathbf{0}) = A$), the system guarantees a corank of exactly $\kappa$. The higher-order term $D(z)$ is a diagonal mapping of monomials $z_i^{\alpha_i}$ with randomly sampled integers $\alpha_i > 1$. Additionally, we apply a dense random linear transformation to both the coordinates and equations, ultimately tracking $N \cdot F(M z) = 0$. 

Additionally, we use a second family of instances where the higher-order components are crossed monomials with prescribed total degree. These systems take the form:
\begin{equation}
    F(z) = A z + T(z) = 0
\end{equation}
As before, $A \in \mathbb{Z}^{n \times n}$ is a random matrix explicitly constructed to have rank $n - \kappa$. However, instead of a simple diagonal mapping, the higher-order component $T(z)$ consists of multivariate polynomials. Each component $T_i(z)$ is formed by summing randomly generated monomials of the form $c \prod_{k=1}^n z_k^{\alpha_k}$, where the coefficients $c$ and non-negative integer exponents $\alpha_k$ are randomly sampled. 

To guarantee that the linear part strictly dominates the Jacobian at the origin—thereby perfectly preserving the prescribed corank $\kappa$—we enforce a total degree constraint of $\sum \alpha_k \ge 2$ on every monomial. This ensures that $\lim_{z \to \mathbf{0}} JT(z) = \mathbf{0}$. Finally, the system is subjected to the same dense random linear transformation, tracking $N \cdot F(M z) = 0$.
\subsection{Plots}
\myparagraph{Convergence comparisons}
In the following experiments, we present a convergence comparison between the classical power-series endgame (denoted as \CLASSICcode{}) and the proposed ArcLength Endgame (\gls{AL}, \gls{LAL}) methods. To provide a clear, hardware-independent measure of algorithmic efficiency, the horizontal axis in all convergence plots denotes the cumulative number of matrix inversions. The vertical axis displays the tracking residual on a logarithmic scale, $\log_{10} \|f(z)\|$, illustrating the depth of convergence as the path approaches the singular root.

Throughout this section, test instances are identified in the figure captions by the source of the polynomial system and the parameter tuple $(n, \kappa, c)$, denoting the number of variables, the prescribed corank of the Jacobian matrix at the singular root, and the winding cycle number, respectively. For certain instances, we also indicate the number of high-order monomials, denoted as $\#{mon}$, added to each equation, alongside the interval from which the monomial degrees $\alpha_i$ are sampled.

The evaluated \gls{LAL} variants are distinguished in the plots by their predictor degree and adaptive step-size thresholds. Specifically, \LALQcode{} designates methods utilizing a quadratic predictor, while \LALCcode{} designates those utilizing a cubic predictor. The appended numerical suffixes (e.g., Q5, C9) indicate the specific bound thresholds used to govern the predictor step-size logic. 

Finally, to effectively track paths deep within the endgame operating zone and mitigate severe ill-conditioning, the numerical trackers are initialized with a baseline of 500 digits of multiprecision arithmetic. To prevent numerical underflow, this precision is expanded dynamically in direct proportion to the order of magnitude of the homotopy parameter, $O(-\log_{10} |t|)$. Additionally, in instances where a tracking method fails to converge—whether due to stalling or divergence—the trajectory is truncated. The final valid step recorded before the tracker failed is explicitly marked on the plot with a hollow black circle ($\bigcirc$).
In our implementation, the floating-point outputs are rounded to the closest rational number $p/q$ within a prescribed tolerance of $\text{tol} = 10^{-3}$. Consequently, the exact integer values reported in the experiment descriptions and figure captions (e.g., $c=4$ and $k_{1,2,3}=\{1, 2, 3\}$) reflect these recovered rational representations.

\myparagraph{Comparing $c/k_1$ Estimation Methods}
In the final section, we present plots comparing the performance of the three approaches discussed in Section~\ref{section:estimatesk1ck2c} to estimate the cycle number. To evaluate each method, we take a current state pair $(z_1, t_1)$ and utilize the history of points generated along the tracking path using the classical power series endgame previously described. 

For the trial-and-error rule (\cSORTcode), we compare the predicted values across all possible cycle numbers ranging from $1$ to a predefined maximum, $C_{\max}=16$. For the path-limit estimation (\cRATIOcode), the estimation relies on two consecutive points from the history, supplemented by an additional point (\cRATIOPLUScode) computed using a stepsize $\lambda$ that is small relative to $|t|$ (e.g., $\lambda = |t| \cdot 10^{-10}$). 

Finally, the geometric sequence sampling method (\cLOGcode) requires three specific values $(t_1, t_2, t_3)$ that form a geometric progression such that $t_2 = \rho t_1$ and $t_3 = \rho^2 t_1$. Consequently, we only perform this computation when three consecutive history points satisfy this geometric property. If this condition is not met at a given step, the computation is skipped. In the accompanying plots, these skipped computations are bridged using linear interpolation to maintain the continuity of the curves.

\newpage

\subsubsection{Comparison performance \ALcode{} vs \CLASSICcode{}:}

\begin{figure}[h!]
    \centering
    \begin{minipage}{0.48\textwidth}
        \centering
        \includegraphics[width=\linewidth]{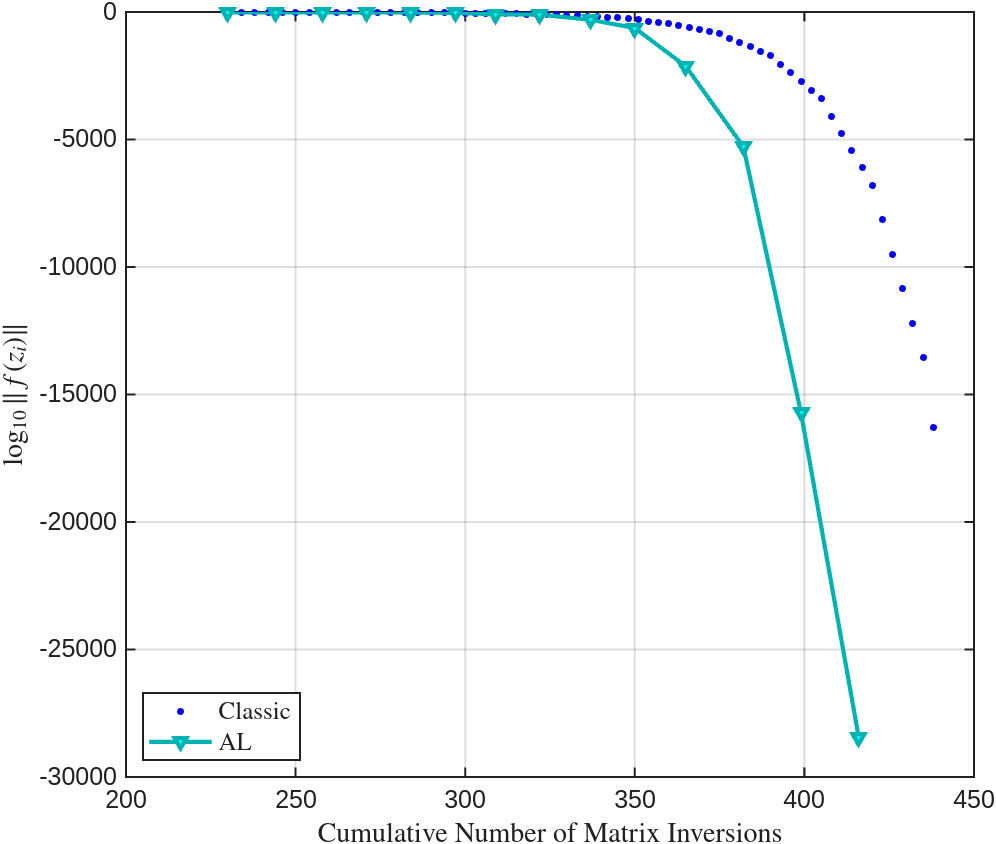}
	    \caption[Griewank-Osborne system]{\textbf{Griewank-Osborne system.}\\ \cite{griewank1981newton}\\
        System parameters: $(n, \kappa, c) = (2, 1, 3)$.}
        \label{fig:gap1_griewank}
    \end{minipage}\hfill
    \begin{minipage}{0.48\textwidth}
        \centering
        \includegraphics[width=\linewidth]{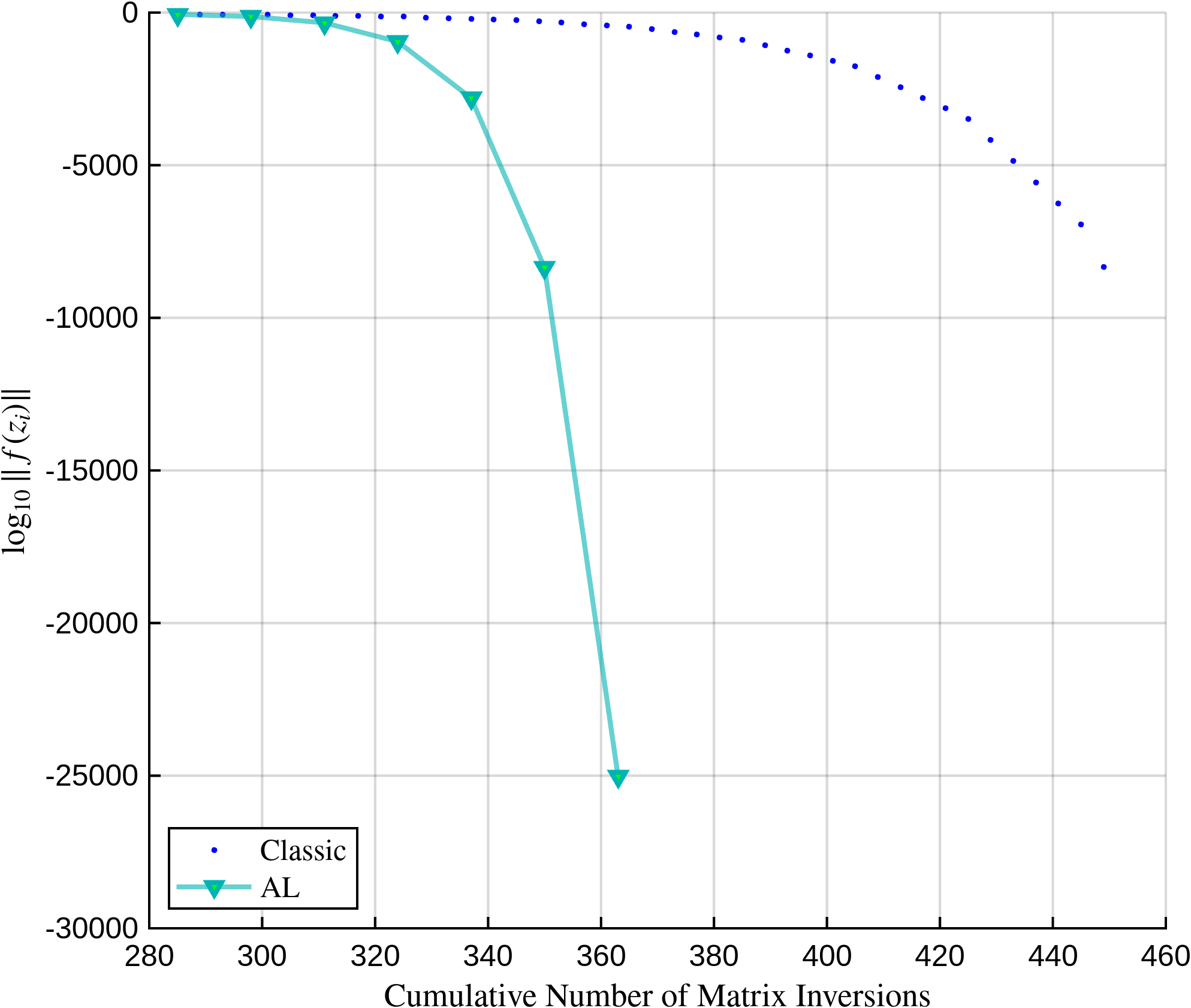}
        \caption[Diagonal generator, n=5]{\textbf{Diagonal generator $D(z)$.}\\
        System parameters: $(n, \kappa, c) = (5, 1, 5)$, with sampled monomial degrees $|\alpha_i| \in [3, 7]$.}
        \label{fig:gap1_diag_n5_c5}
    \end{minipage}

    \vspace{0.5cm} 

    \begin{minipage}{0.48\textwidth}
        \centering
        \includegraphics[width=\linewidth]{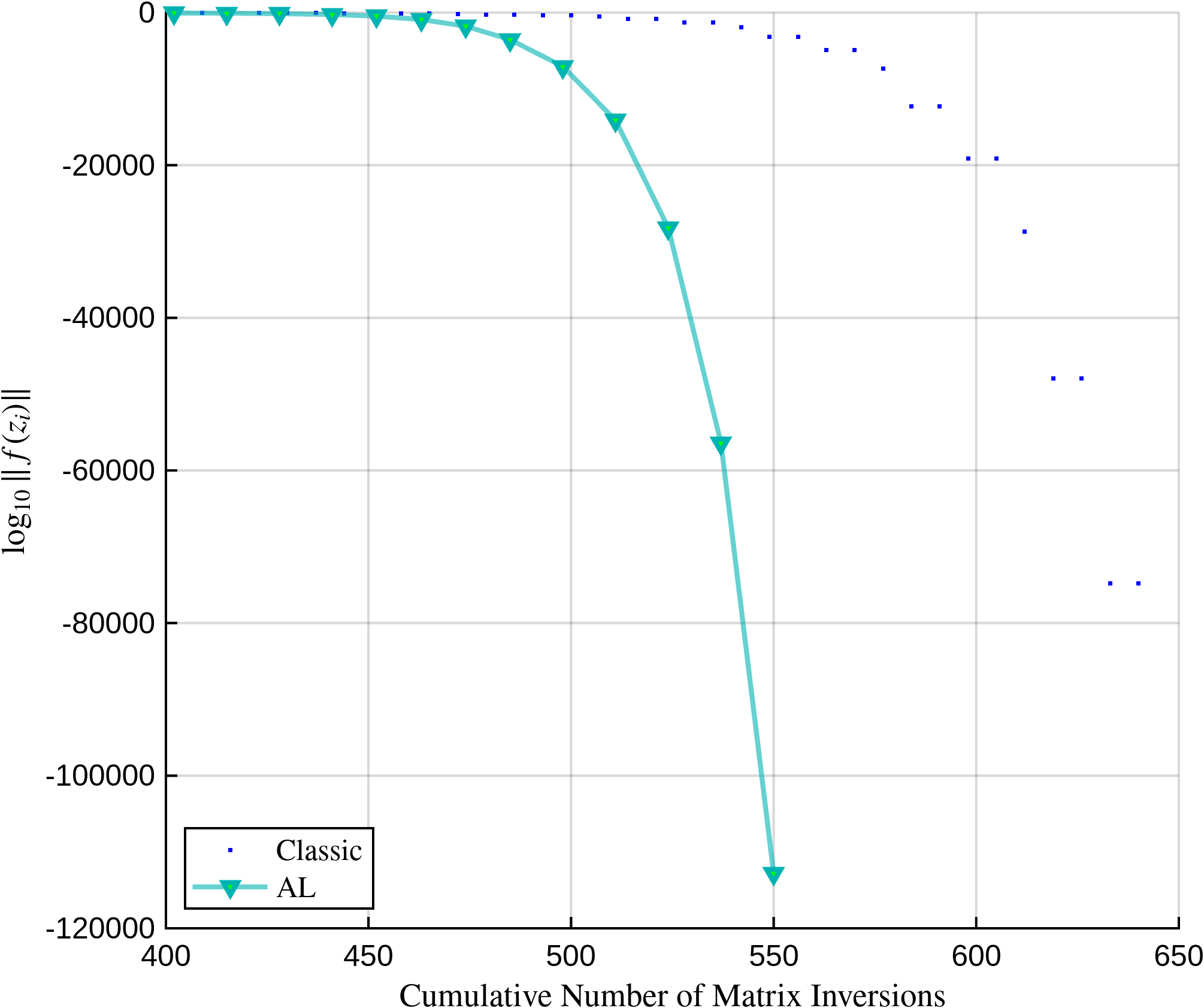}
        \caption[Diagonal generator, n=5]{\textbf{Diagonal generator $D(z)$.}\\
        System parameters: $(n, \kappa, c) = (5, 1, 5)$, with sampled monomial degrees $|\alpha_i| \in [3, 7]$.}
        \label{fig:gap1_diag_n5_c5_LINEAR}
    \end{minipage}\hfill
\end{figure}

\newpage
\subsubsection{Instances from literature:}

\begin{figure}[h!]
    \centering
    \begin{minipage}{0.48\textwidth}
        \centering
        \includegraphics[width=\linewidth]{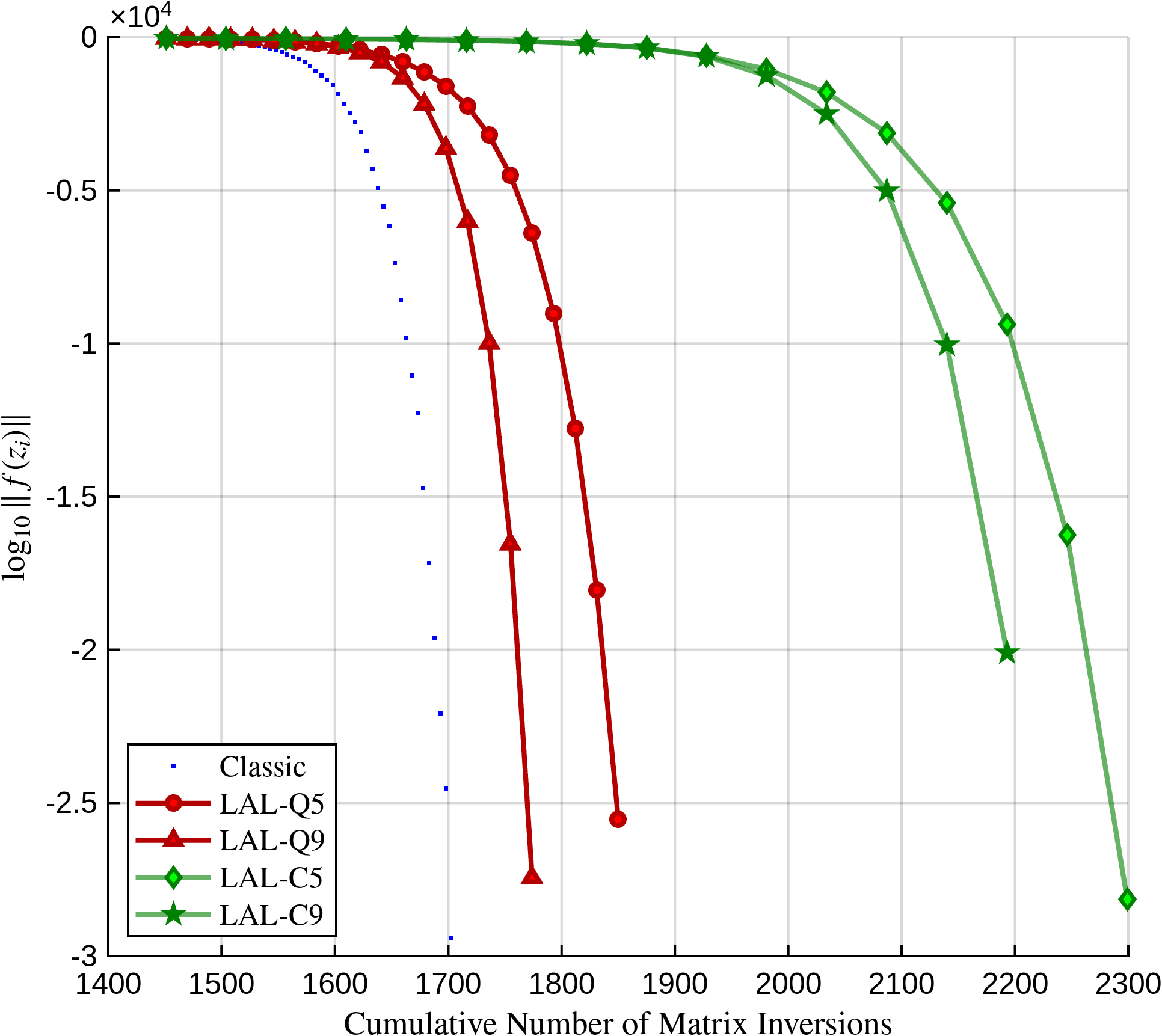}
	    \caption[Lecerf's system]{\textbf{Lecerf system} \parencite{lecerf2002quadratic}.\\
        System parameters: $(n, \kappa, c) = (3, 2, 6)$, with $k_{1,2,3} = \{1, 2, 3\}$.}
        \label{fig:gapk_lecerf}
    \end{minipage}\hfill
    \begin{minipage}{0.48\textwidth}
        \centering
        \includegraphics[width=\linewidth]{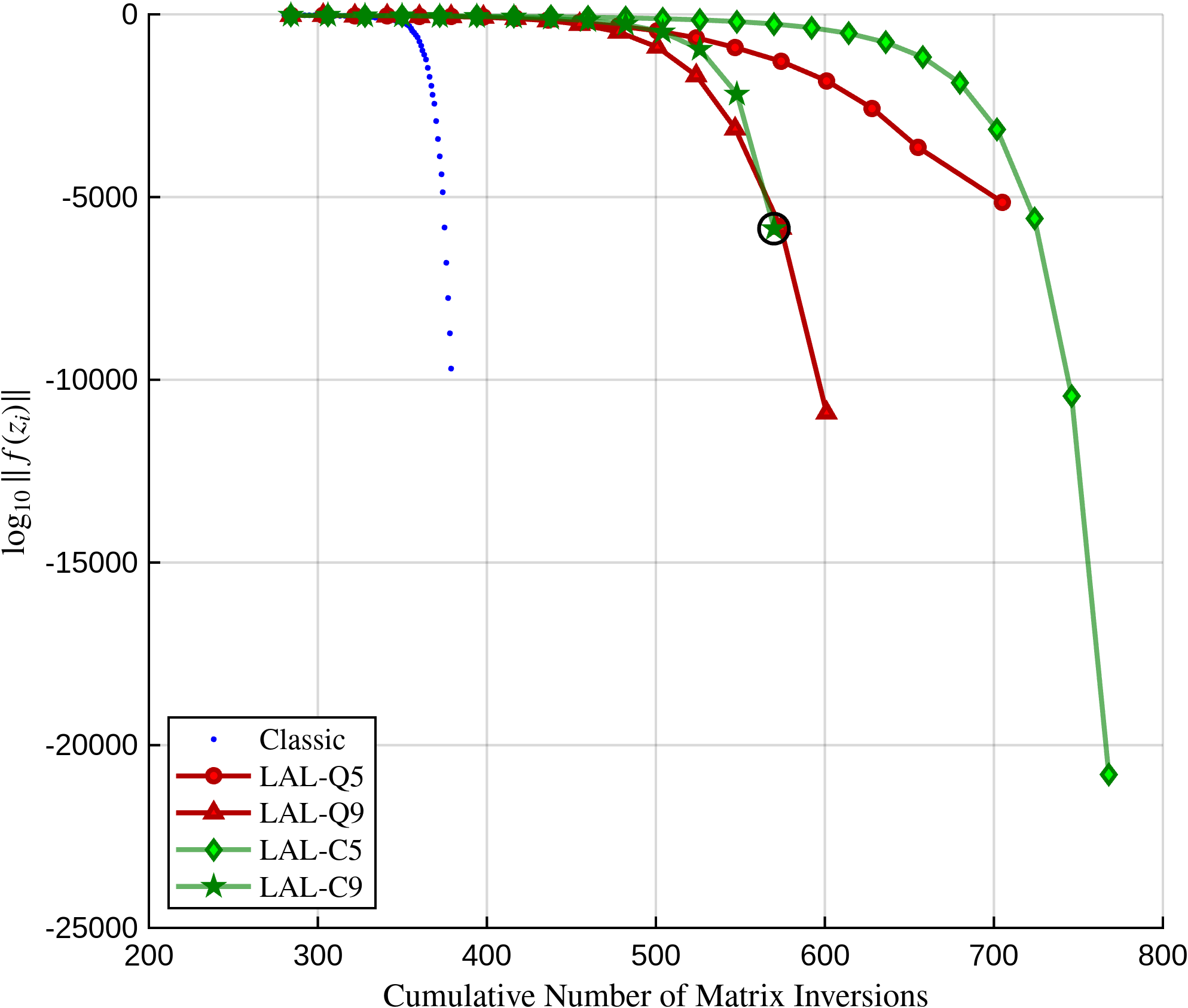}
	    \caption[Caprasse system]{\textbf{Caprasse system} \parencite{hauenstein2015certifying}.\\
        System parameters: $(n, \kappa, c) = (3, 2, 2)$, with  $k_{1,2,3} = \{1, 2, 3\}$.}
        \label{fig:gapk_caprasse}
    \end{minipage}
\end{figure}

\newpage
\subsubsection{Problems first generator:}

\begin{figure}[h!]
    \centering
    \begin{minipage}{0.48\textwidth}
        \centering
        \includegraphics[width=\linewidth]{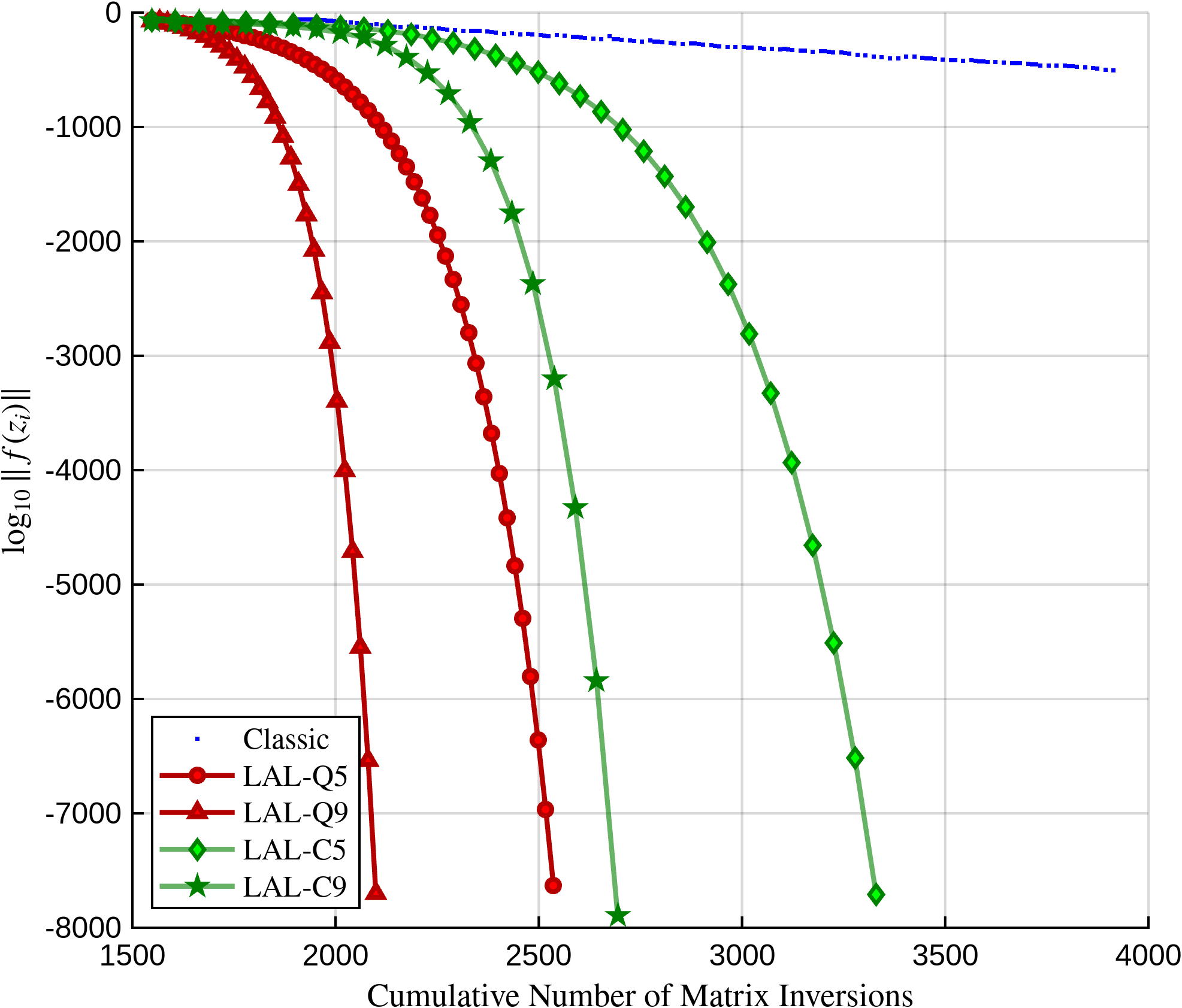}
        \caption[Diagonal generator, n=7]{\textbf{Diagonal generator $D(z)$.}\\
        System parameters: $(n, \kappa, c) = (7, 5, 30)$, with $k_{1,2,3} = \{5, 6, 7\}$ and $\alpha \in [3, 7]$.}
        \label{fig:diag_n7_k5}
    \end{minipage}\hfill
    \begin{minipage}{0.48\textwidth}
        \centering
        \includegraphics[width=\linewidth]{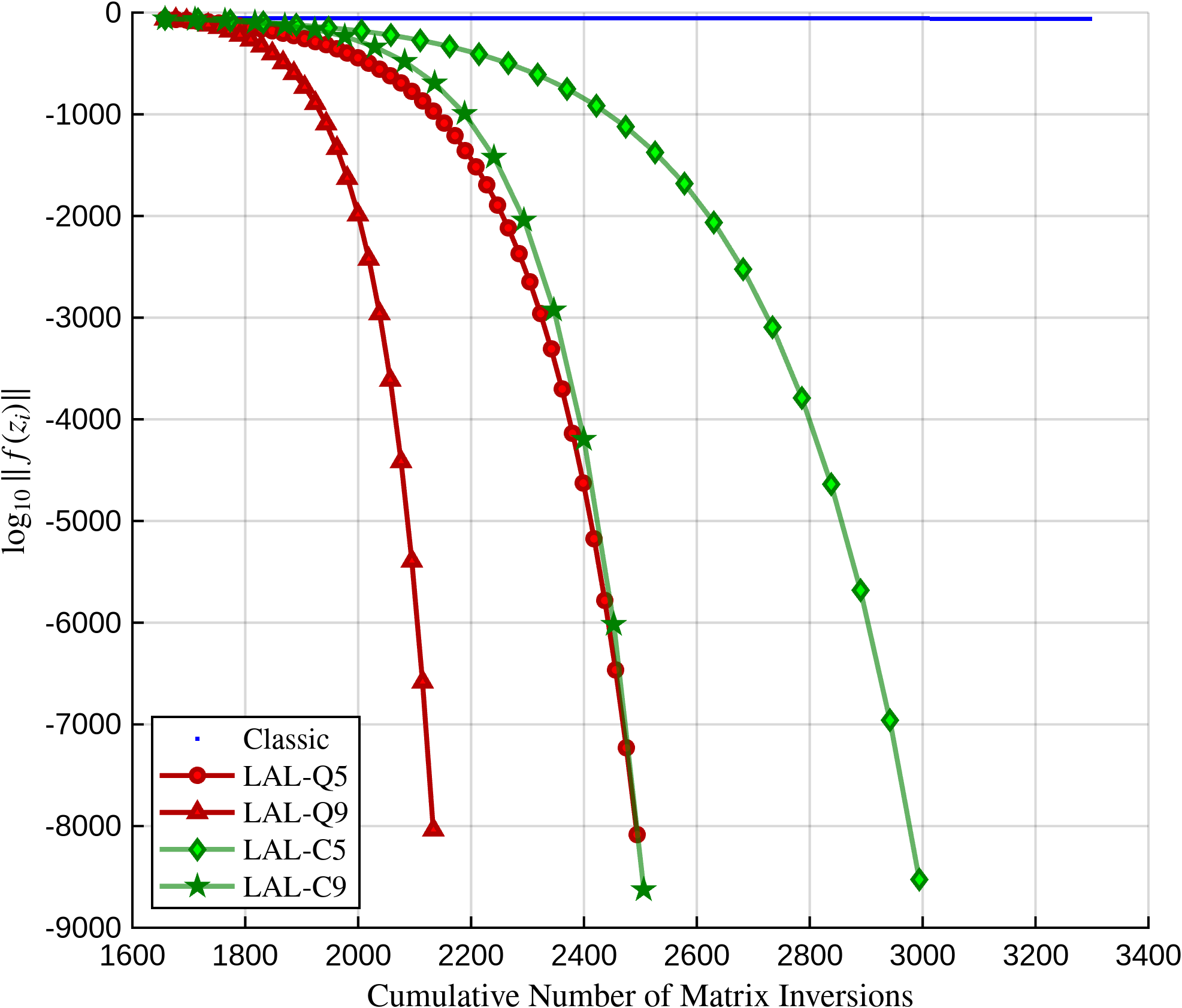}
        \caption[Diagonal generator, n=9]{\textbf{Diagonal generator $D(z)$.}\\
        System parameters: $(n, \kappa, c) = (9, 2, 20)$, with $k_{1,2,3} = \{4, 5, 6\}$ and $\alpha \in [3,10]$.}
        \label{fig:diag_n9_k2}
    \end{minipage}

    \vspace{0.5cm} 

    \begin{minipage}{0.48\textwidth}
        \centering
        \includegraphics[width=\linewidth]{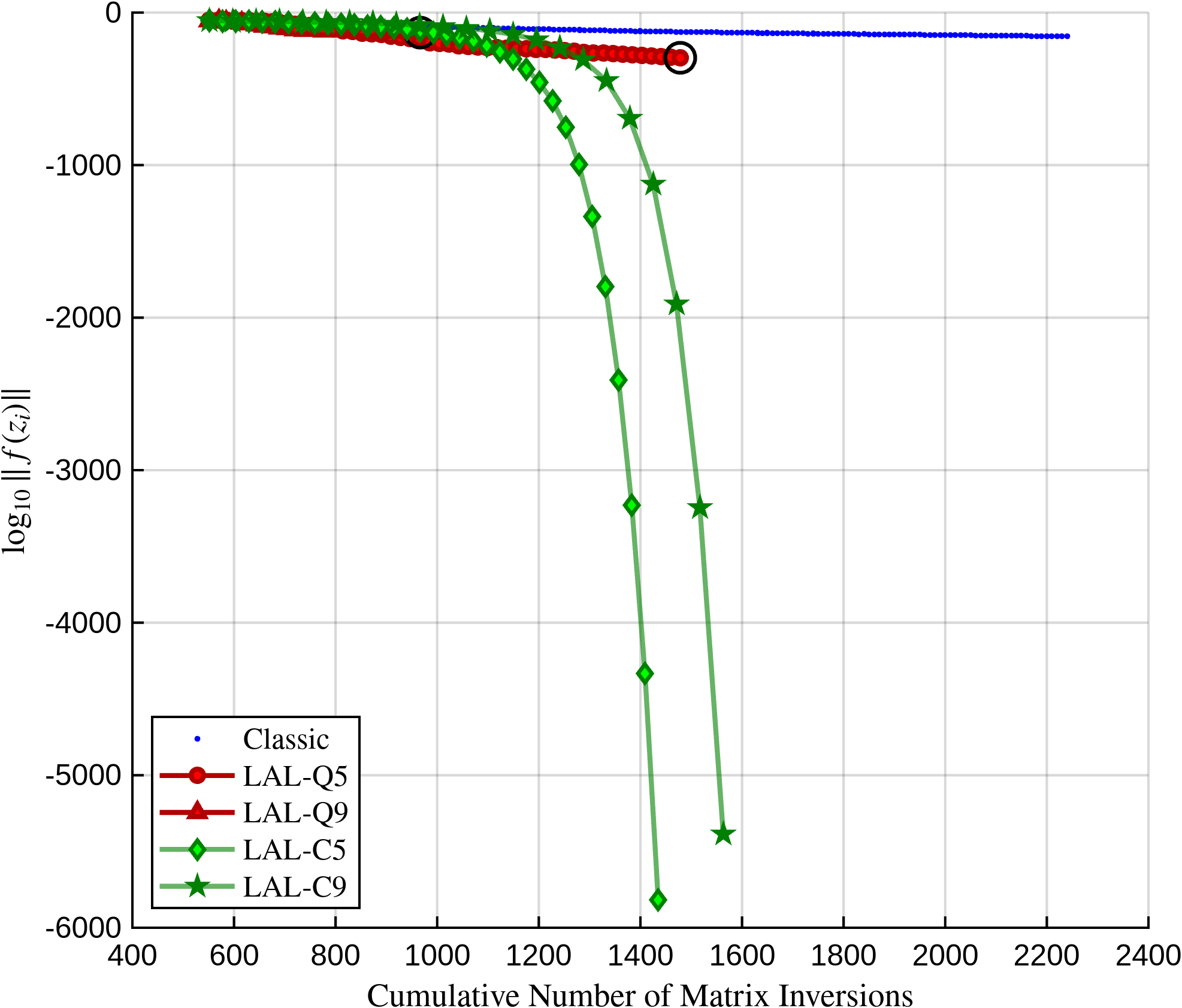}
        \caption[Diagonal generator, n=6]{\textbf{Diagonal generator $D(z)$.}\\
        System parameters: $(n, \kappa, c) = (6, 4, 90)$, with $k_{1,2,3} = \{10, 15, 18\}$ and $\alpha \in [3,10]$.}
        \label{fig:diag_n6_k4}
    \end{minipage}\hfill
    \begin{minipage}{0.48\textwidth}
        \centering
        \includegraphics[width=\linewidth]{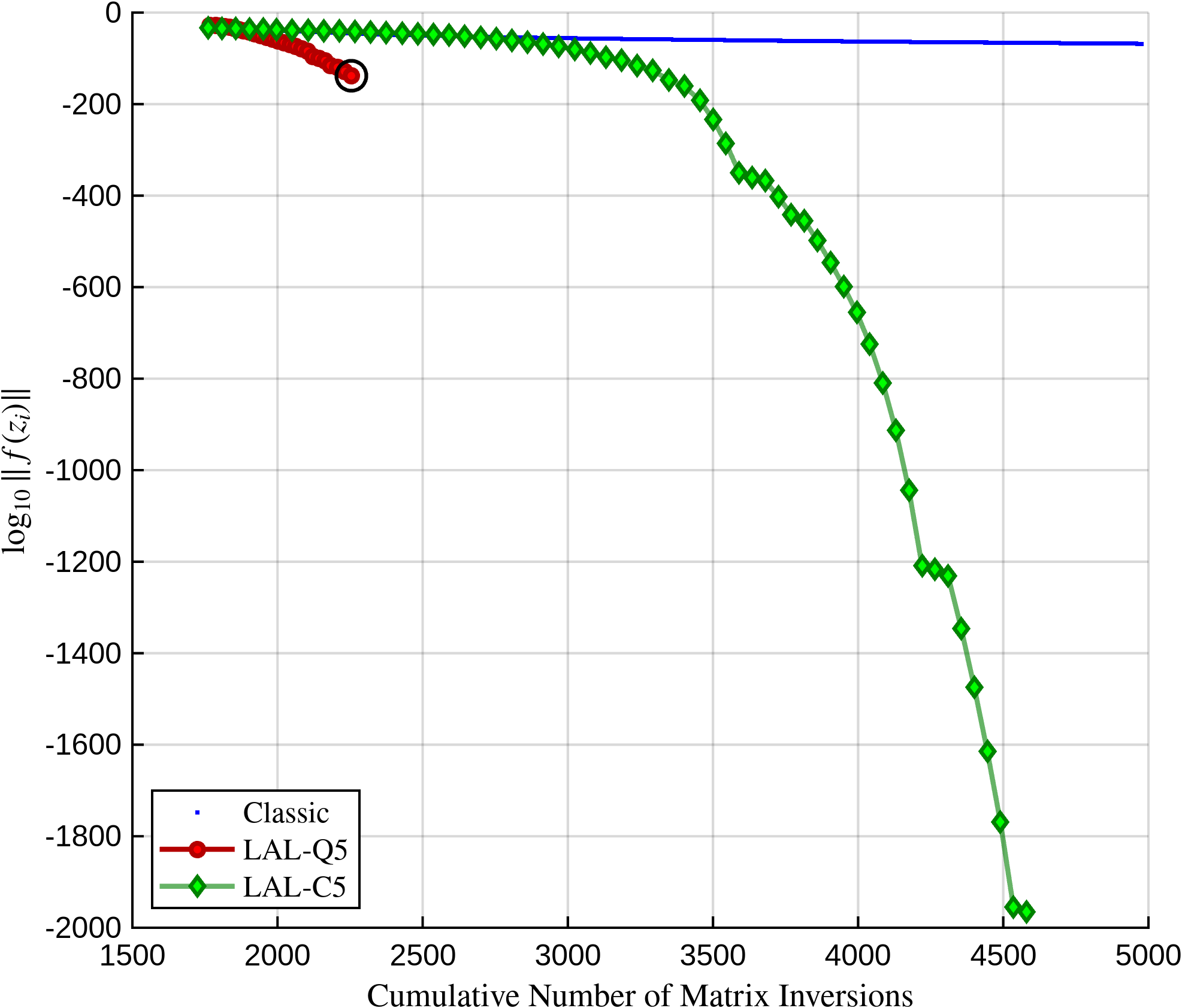}
        \caption[Diagonal generator, n=8]{\textbf{Diagonal generator $D(z)$.}\\
        System parameters: $(n, \kappa, c) = (8, 7, 210)$, with $k_{1,2,3} = \{30, 35, 42\}$ and $\alpha \in [3,5]$.}
        \label{fig:diag_n8_k7}
    \end{minipage}
\end{figure}



\newpage
\subsubsection{Problems second generator:}

\begin{figure}[h!]
    \centering
    \begin{minipage}{0.48\textwidth}
        \centering
        \includegraphics[width=\linewidth]{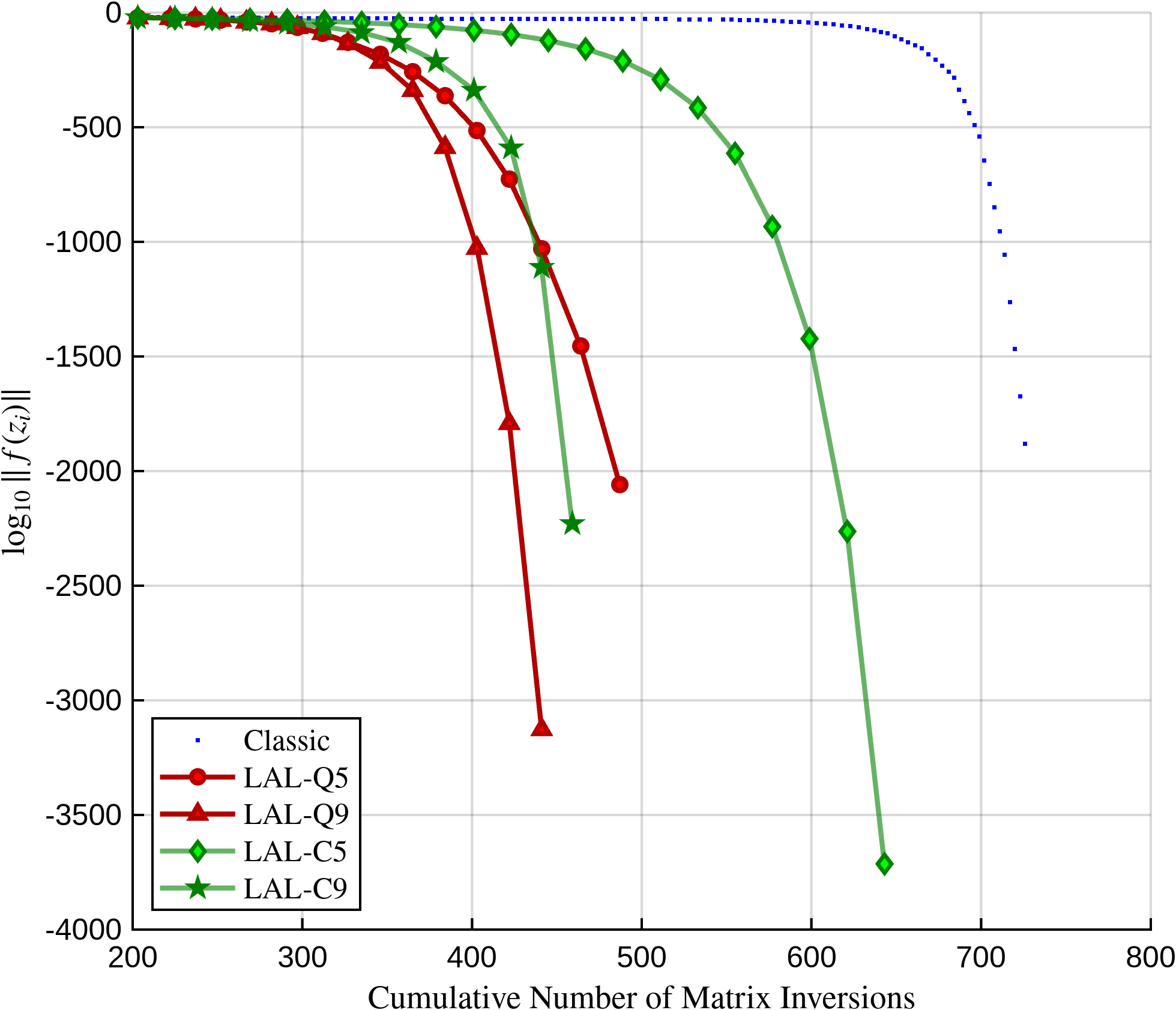}
        \caption[Multivariate generator, n=4]{\textbf{Multivariate generator $T(z)$.}\\
        System parameters: $(n, \kappa, c,\#mon) = (4, 2, 4,2)$, with $k_{1,2,3} = \{1, 2, 3\}$ and degree constraint $\alpha \in [1, 4]$.}
        \label{fig:multi_n4_k2}
    \end{minipage}\hfill
    \begin{minipage}{0.48\textwidth}
        \centering
        \includegraphics[width=\linewidth]{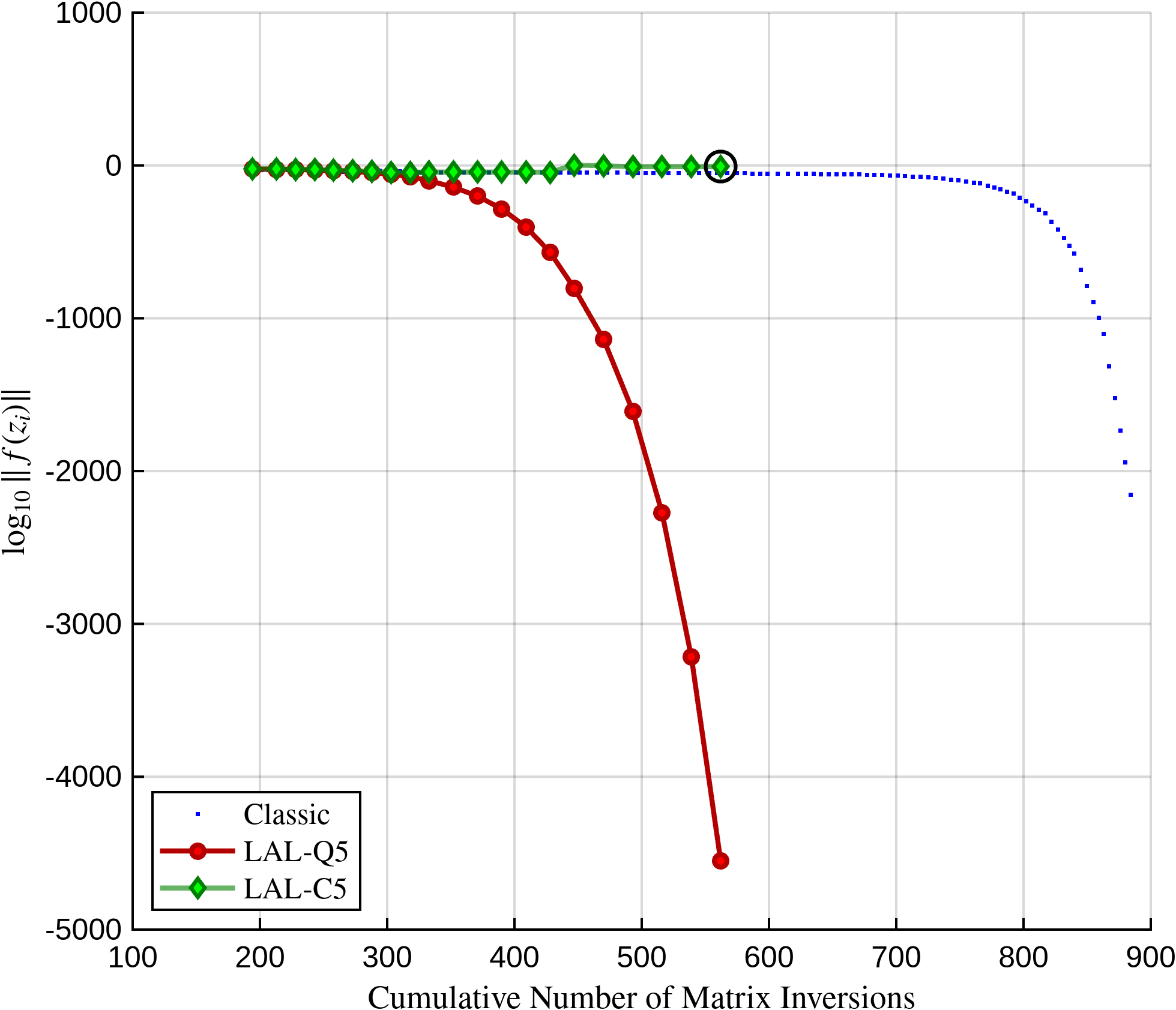}
        \caption[Multivariate generator, n=5]{\textbf{Multivariate generator $T(z)$.}\\
        System parameters: $(n, \kappa, c,\#mon) = (5, 2, 4,2)$, with $k_{1,2,3} = \{1, 2, 3\}$ and degree constraint $\alpha \in [1]$.}
        \label{fig:multi_n5_k2}
    \end{minipage}

    \vspace{0.5cm} 

    \begin{minipage}{0.48\textwidth}
        \centering
        \includegraphics[width=\linewidth]{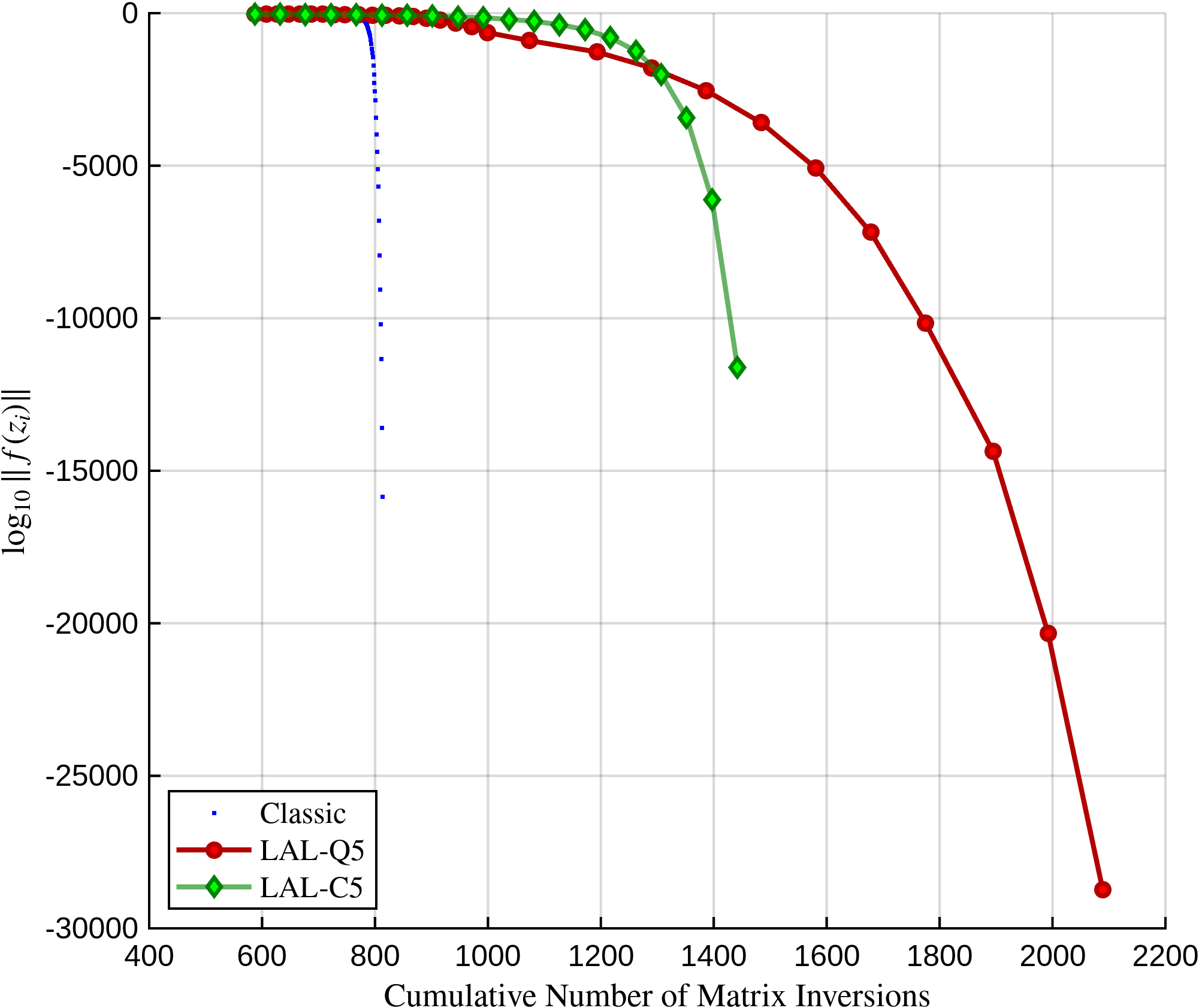}
        \caption[Multivariate generator, n=4]{\textbf{Multivariate generator $T(z)$.}\\
        System parameters: $(n, \kappa, c,\#mon) = (4, 3, 2,1)$, with $k_{1,2,3} = \{1, 2, 3\}$ and degree constraint $\alpha \in [1]$.}
        \label{fig:multi_n4_k3}
    \end{minipage}
\end{figure}

\begin{sidewaysfigure}

\subsubsection{Comparison estimations $c/k_1$:}\label{section:PlotEstimationsCK1}

    \centering
    \begin{minipage}{\textwidth}
        \centering
        \includegraphics[width=\linewidth]{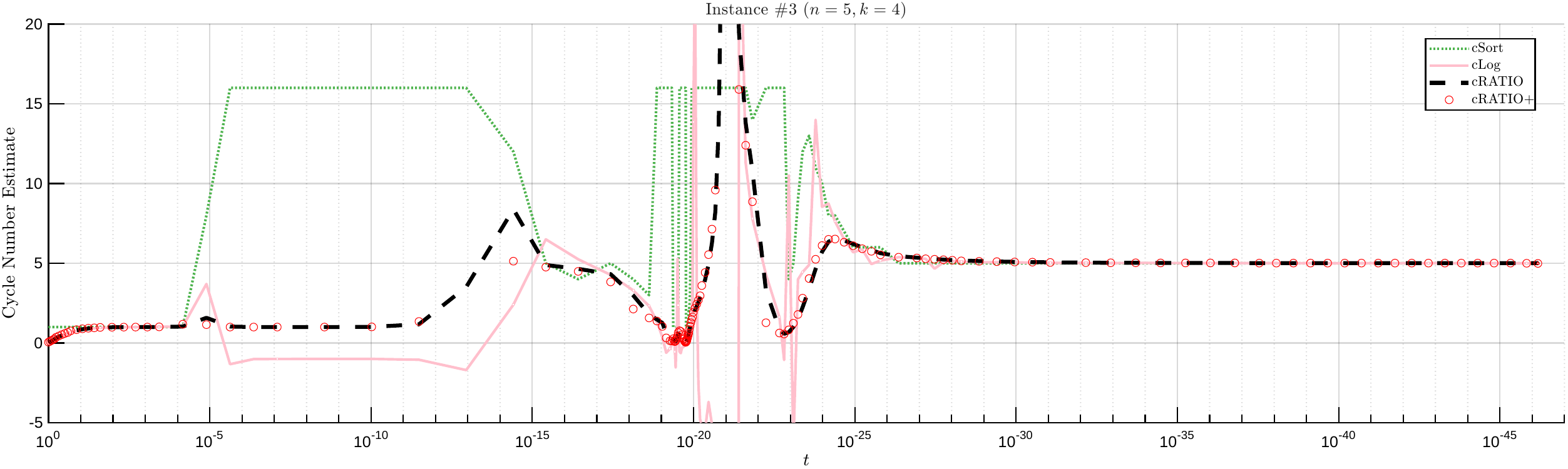}
        \caption{}
        \label{fig:gapk_data3_1}
    \end{minipage}
     \vspace{1cm}
    \begin{minipage}{\textwidth}
        \centering
        \includegraphics[width=\linewidth]{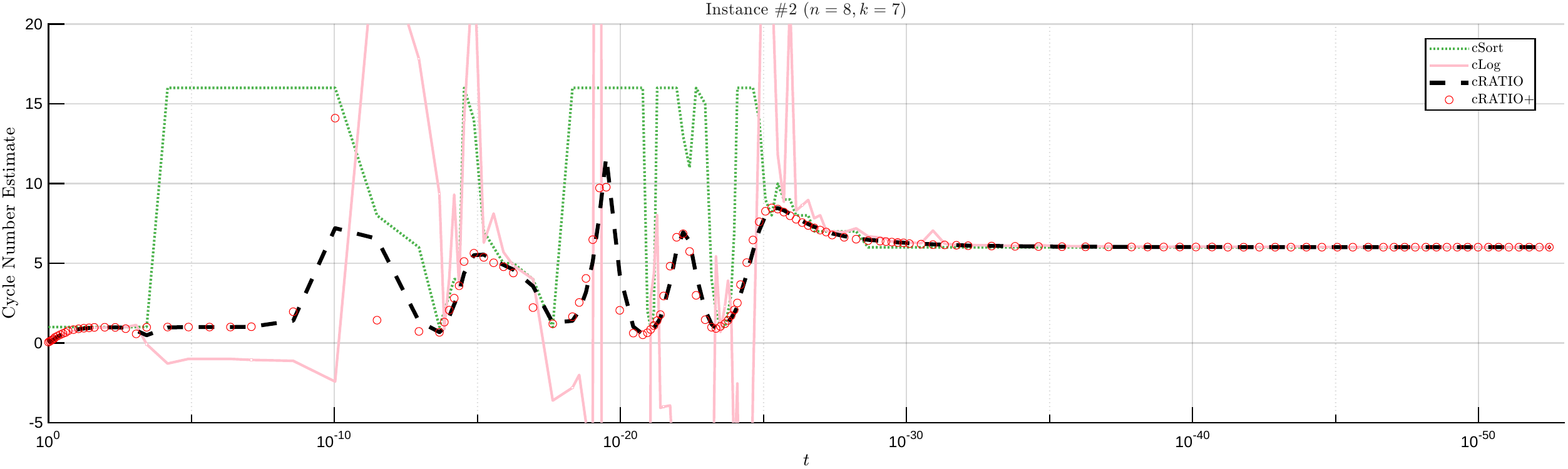}
        \caption{}
        \label{fig:gapk_data2_2}
    \end{minipage}   
\end{sidewaysfigure}



\begin{sidewaysfigure}
    \centering
    \begin{minipage}{\textwidth}
        \centering
        \includegraphics[width=\linewidth]{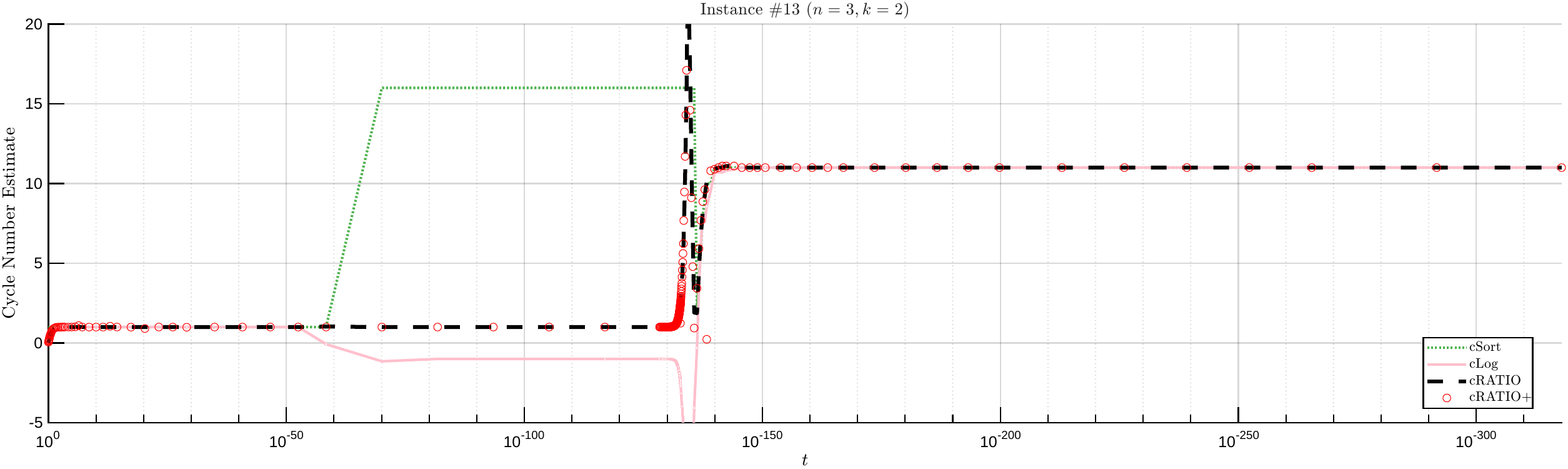}
        \caption{}
        \label{fig:gapk_data5_3}
    \end{minipage}
\vspace{1cm}
        \begin{minipage}{\textwidth}
        \centering
        \includegraphics[width=\linewidth]{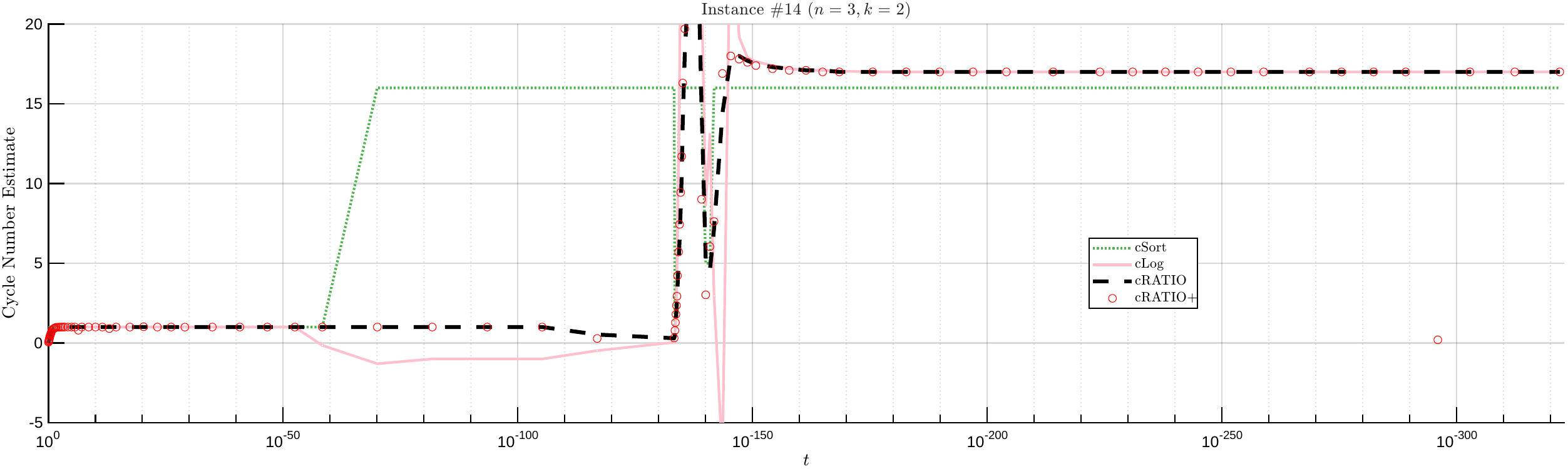}
        \caption{}
        \label{fig:gapk_data14_3}
    \end{minipage}
\end{sidewaysfigure}



\begin{sidewaysfigure}
    \centering
    \centering
    \begin{minipage}{\textwidth}
        \centering
        \includegraphics[width=\linewidth]{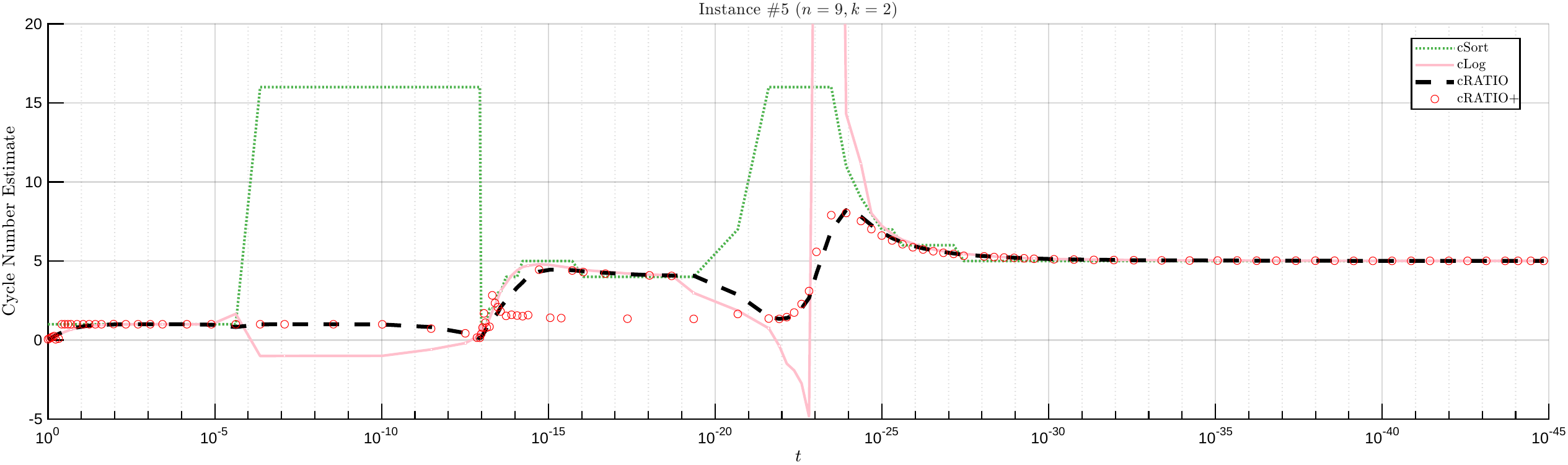}
        \caption{}
        \label{fig:gapk_data5_4}
    \end{minipage}    
    
    \vspace{1cm} 
    
    \begin{minipage}{\textwidth}
        \centering
        \includegraphics[width=\linewidth]{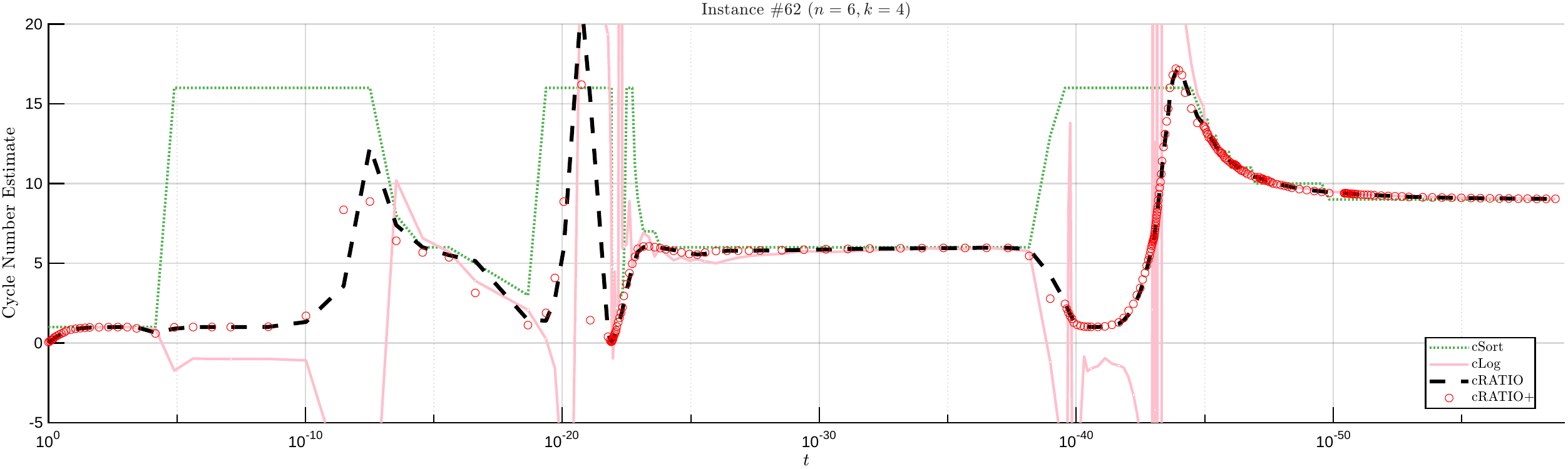}
        \caption{}
        \label{fig:gapk_data5_5b}
    \end{minipage}
    
\end{sidewaysfigure}

\printbibliography

\end{document}